\definecolor{TUred}{RGB}{165,30,55}
\definecolor{TUgold}{RGB}{180,160,105}
\definecolor{TUdark}{RGB}{50,65,75}
\definecolor{TUgray}{RGB}{175,179,183}
\definecolor{TUdarkblue}{RGB}{65,90,140}
\definecolor{TUblue}{RGB}{0,105,170}
\definecolor{TUlightblue}{RGB}{80,170,200}
\definecolor{TUlightgreen}{RGB}{130,185,160}
\definecolor{TUgreen}{RGB}{125,165,75}
\definecolor{TUdarkgreen}{RGB}{50,110,30}
\definecolor{TUocre}{RGB}{200,80,60}
\definecolor{TUviolet}{RGB}{175,110,150}
\definecolor{TUmauve}{RGB}{180,160,150}
\definecolor{TUbeige}{RGB}{215,180,105}
\definecolor{TUorange}{RGB}{210,150,0}
\definecolor{TUbrown}{RGB}{145,105,70}
\newcommand*{\figlegendline}[1]{(\protect\tikz[baseline]{\protect\draw[#1, very thick] (0,.6ex) -- (1em,.6ex);})}
\numberwithin{equation}{section}  %
\newcommand*{\defeq}{\coloneqq}  %
\providecommand{\where}{}
\DeclarePairedDelimiterX{\set}[1]\{\}{%
\renewcommand*{\where}{\colon}
\newcommand*{\setsym}[1]{\ensuremath{\mathbb{#1}}}
\newcommand*{\N}{\setsym{N}}
\newcommand*{\R}{\setsym{R}}
\newcommand*{\preim}[1]{#1^{-1}}  %
\DeclarePairedDelimiterX{\restrict}[3]{.}{\rvert_{#2}^{#3}}{#1}  %
\newcommand*{\inv}{^{-1}}
\newcommand*{\openarg}[1][\cdot]{\,#1\,}
\newcommand*{\closure}[1]{\overline{#1}}
\NewDocumentCommand{\range}{s m}{%
  \operatorname{ran}
  \IfBooleanTF{#1}{\left(}{(}
  #2
  \IfBooleanTF{#1}{\right)}{)}
}
\NewDocumentCommand{\kernel}{s m}{%
  \operatorname{ker}
  \IfBooleanTF{#1}{\left(}{(}
  #2
  \IfBooleanTF{#1}{\right)}{)}
}
\newcommand*{\oball}[2]{B_{#1} \left( #2 \right)}  %
\newcommand*{\dualop}{'}
\DeclarePairedDelimiterXPP{\@inprod}[3]{}{\langle}{\rangle}{\ifstrempty{#3}{}{_{#3}}}{#1, #2}
\NewDocumentCommand{\inprod}{s O{} m m O{}}{%
  \IfBooleanTF{#1}{%
    \@inprod*{#3}{#4}{#5}%
  }{%
    \@inprod[#2]{#3}{#4}{#5}%
  }%
}
\NewDocumentCommand{\cfns}{m o}{%
  C (#1\IfValueT{#2}{, #2})
}
\NewDocumentCommand{\cdfns}{O{0} m o}{%
  C^{#1} (#2\IfValueT{#3}{, #3})
}
\NewDocumentCommand{\holdersp}{m o m}{%
  C^{#1\IfValueT{#2}{, #2}}(\closure{#3})
}
\RenewDocumentCommand{\L}{m o}{%
  \ensuremath{
    L_{#1}
    \IfNoValueF{#2}{\left( #2 \right)}
  }
}
\NewDocumentCommand{\sobolev}{o m o}{%
  \IfNoValueTF{#1}{
    H^{#2}
  }{
    W^{#1,#2}
  }
  \IfNoValueF{#3}{\left( #3 \right)}
}
\NewDocumentCommand{\sobolevtest}{o m o}{%
  \sobolev[#1]{#2}_0
  \IfNoValueF{#3}{\left( #3 \right)}
}
\newcommand*{\linop}[1]{\mathcal{#1}}
\newcommand*{\linfctls}[1]{{\bm{\linop{#1}}}}
\DeclarePairedDelimiter{\@evallinop@oparg}{(}{)}
\DeclarePairedDelimiter{\@evallinop@fnarg}{(}{)}
\NewDocumentCommand{\evallinop}{m s O{} m s O{} d()}{%
  #1%
  \IfBooleanTF{#2}{%
    \@evallinop@oparg*{#4}%
  }{%
    \@evallinop@oparg[#3]{#4}%
  }%
  \IfValueT{#7}{%
    \IfBooleanTF{#5}{%
      \@evallinop@fnarg*{#7}%
    }{%
      \@evallinop@fnarg[#6]{#7}%
    }%
  }%
}
\NewDocumentCommand{\linopat}{m s O{} m d()}{%
  \IfBooleanTF{#2}{%
    \evallinop{\linop{#1}}*{#4}(#5)%
  }{%
    \evallinop{\linop{#1}}[#3]{#4}(#5)%
  }%
}
\NewDocumentCommand{\linfctlsat}{m s O{} m d()}{%
  \IfBooleanTF{#2}{%
    \evallinop{\linfctls{#1}}*{#4}(#5)%
  }{%
    \evallinop{\linfctls{#1}}[#3]{#4}(#5)%
  }%
}
\NewDocumentCommand{\jacobian}{o m o}{%
  \IfNoValueTF{#1}{%
    \mathrm{D} #2 \IfNoValueF{#3}{\left( #3 \right)}
  }{%
    \IfValueT{#3}{\left.}
    \mathrm{D}_{#1} #2
    \IfValueT{#3}{\right|_{#1 = #3}}
  }
}
\NewDocumentCommand{\hessian}{o m o}{%
  \IfNoValueTF{#1}{%
    \mathrm{H} #2 \IfNoValueF{#3}{\left( #3 \right)}
  }{%
    \IfValueT{#3}{\left.}
    \mathrm{H}_{#1} #2
    \IfValueT{#3}{\right|_{#1 = #3}}
  }
}
\DeclareMathOperator*{\argmin}{arg\,min}
\newcommand*{\sigalg}{\mathcal{A}}
\newcommand*{\borelsigalg}[1]{\mathcal{B} \left( #1 \right)}
\newcommand*{\pushfw}[2]{#2_\star #1}
\newcommand*{\@given}[1]{%
  \nonscript\:#1\vert%
  \allowbreak%
  \nonscript\:%
  \mathopen{}%
}
\newcommand*{\given}{\@given{}}
\DeclarePairedDelimiterX{\condps}[1]{(}{)}{%
  \renewcommand*{\given}{\@given{\delimsize}}%
  #1%
}
\newcommand*{\pmeas}{\mathrm{P}}  %
\NewDocumentCommand{\prob}{O{\pmeas} s O{} m}{%
  #1%
  \IfBooleanTF{#2}{%
    \condps*{#4}%
  }{%
    \condps[#3]{#4}%
  }%
}
\DeclarePairedDelimiterX{\@condrv}[1]{.}{.}{%
  \renewcommand*{\given}{\@given{\delimsize}}%
  #1}
\NewDocumentCommand{\condrv}{som}{%
  \IfBooleanTF{#1}{%
    \@condrv*{#3}
  }{%
    \IfNoValueTF{#2}{%
      \begingroup%
      \renewcommand*{\given}{\@given{}}%
      #3%
      \endgroup%
    }{%
      \@condrv[#2]{#3}%
    }
  }
}
\NewDocumentCommand{\expectation}{o o m}{%
  \operatorname{\mathbb{E}}\IfNoValueF{#1}{_{#1\IfNoValueF{#2}{\sim #2}}} \left[ #3 \right]
}
\NewDocumentCommand{\covariance}{o o m m}{%
  \operatorname{Cov}\IfNoValueF{#1}{_{#1\IfNoValueF{#2}{\sim #2}}} \left[ #3, #4 \right]
}
\NewDocumentCommand{\variance}{o o m}{%
  \operatorname{\mathbb{V}}\IfNoValueF{#1}{_{#1\IfNoValueF{#2}{\sim #2}}} \left[ #3 \right]
}
\newcommand*{\gaussian}[2]{{\ensuremath{\operatorname{\mathcal{N}}\left(#1, #2\right)}}}
\NewDocumentCommand{\LkL}{m m o}{%
  #1 #2 \IfValueTF{#3}{#3}{#1}\dualop
}
\declaretheorem[style=plain,numberwithin=section]{theorem}
\declaretheorem[style=plain,sibling=theorem]{proposition}
\declaretheorem[style=plain,sibling=theorem]{lemma}
\declaretheorem[style=plain,sibling=theorem]{corollary}
\declaretheorem[style=definition,sibling=theorem]{definition}
\declaretheorem[style=definition,sibling=theorem]{assumption}
\declaretheorem[style=remark,numberwithin=section]{remark}
\declaretheorem[style=remark,numberwithin=section]{example}
\newcommand*{\latsp}{\mathbb{X}}  %
\newcommand*{\sublatsp}{\mathbb{W}}  %
\newcommand*{\latval}{x}  %
\newcommand*{\sublatval}{w}  %
\newcommand*{\latset}{X}  %
\newcommand*{\sublatset}{W}  %
\newcommand*{\prior}{\mu}  %
\newcommand*{\obsop}{h}  %
\newcommand*{\obssp}{\mathbb{Y}}  %
\newcommand*{\subobssp}{\mathbb{Z}}  %
\newcommand*{\obs}{y}  %
\newcommand*{\subobs}{z}  %
\newcommand*{\obsset}{Y} %
\newcommand*{\subobsset}{Z} %
\newcommand*{\predictive}{\pushfw{\prior}{\obsop}}  %
\newcommand*{\obsfib}[1][\obs]{\obsop\inv \pqty{\set{#1}}}  %
\NewDocumentCommand{\disint}{m m s o D(){\openarg}}{%
  #1^{#2}%
  \IfNoValueF{#4}{%
    \IfBooleanTF{#3}{\condps}{\condps*}%
    {#5 \given #4}%
  }%
}
\newcommand{\priordisint}[1][\obs]{%
  \ifblank{#1}{%
    \disint{\prior}{\obsop}%
  }{%
    \disint{\prior}{\obsop}[#1]%
  }%
}
\newcommand*{\mode}{\latval^\star}
\newcommand*{\omfctl}[1][\prior]{I_{#1}}
\newcommand*{\omdom}[1][\prior]{E_{#1}}  %
\newcommand*{\lebesgue}{\lambda}  %
\newcommand*{\volmeas}[1]{\omega_{#1}}  %
  \crefname{assumption}{Assumption}{Assumptions}
  \crefname{assumption}{assumption}{assumptions}
\title{%
  Constructive Disintegration\\
  and Conditional Modes%
}
\author{%
  Nathaël Da Costa\thanks{Tübingen AI Center, University of Tübingen. Equal contribution.}
  \and
  Marvin Pförtner\footnotemark[1]
  \and
  Jon Cockayne\thanks{University of Southampton}
}
\begin{document}

\maketitle

\begin{abstract}
  Conditioning, the central operation in Bayesian statistics, is formalised by the notion of disintegration of measures.
However, due to the implicit nature of their definition, constructing disintegrations is often difficult.
A folklore result in machine learning conflates the construction of a disintegration with the restriction of probability density functions onto the subset of events that are consistent with a given observation.
We provide a comprehensive set of mathematical tools which can be used to construct disintegrations and apply these to find densities of disintegrations on differentiable manifolds.
Using our results, we provide a disturbingly simple example in which the restricted density and the disintegration density drastically disagree.
Motivated by applications in approximate Bayesian inference and Bayesian inverse problems, we further study the modes of disintegrations.
We show that the recently introduced notion of a ``conditional mode'' does not coincide in general with the modes of the conditional measure obtained through disintegration, but rather the modes of the restricted measure.
We also discuss the implications of the discrepancy between the two measures in practice, advocating for the utility of both approaches depending on the modelling context.

\end{abstract}

\tableofcontents

\section{Introduction}
\label{sec:introduction}
The construction of conditional distributions is central to statistics and probabilistic machine learning.
On discrete event spaces, these can often be constructed directly, by applying the definition of conditional probability.
However, this construction breaks down in case the event to be conditioned on has probability zero, which is virtually always the case on continuous event spaces.
In this case, the construction is more involved, and many texts focus solely on the setting where a Lebesgue density is available or, more generally, where the conditional is dominated by, and thus has a density with-respect-to, the base measure.
However, recent works have generated a profusion of examples in which this familiar construction is no longer valid, primarily due to conditionals being defined on a \emph{submanifold} of the support of the measure.
Some examples include:
\begin{itemize}
    \item \textbf{Bayesian deep learning} in which reparameterisation invariances often lead to complex multimodalities in posterior measures (e.g.\ \cite{Wiese2023SymmetryBayesianDL,Laurent2024SymmetryAware}), potentially mitigated by restricting a prior to a submanifold to limit these invariances,
    \item \textbf{Directional statistics} \citep{Mardia1999} where learning problems are defined on manifolds such as the Stiefel manifold of orthonormal frames, a submanifold of $\R^{d \times k}$,
    \item \textbf{Probabilistic numerics} \citep{cockayne_bayesian_2019} which utilises noise-free observations of an unknown, resulting in posteriors concentrated on submanifolds.
\end{itemize}

There is therefore a growing need to be able to characterise conditional distributions on a more intrinsic, measure-theoretic level than by using densities.

In modern probability theory, conditioning is formalised as \emph{disintegration}.
A disintegration of a measure $\prior$ on $\latsp$ along a map $\obsop:\latsp\to\obssp$ is defined as the almost-surely\footnote{Specifically, $\predictive$-almost surely; see \cref{def:disintegration,thm:disintegration}.} unique family of distributions $\set{\priordisint}_{\obs\in\obssp}$ that satisfies the law of total probability.
Existence and almost-sure uniqueness of disintegrations is provided by \emph{disintegration theorems} which have been available under mild conditions since at least \citet{dellacherie_probabilities_1978}; see \citet{chang_conditioning_1997} for a more recent exposition.
However proofs are typically abstract and, crucially, non-constructive outside of conjugate settings, such as with Gaussian $\prior$ and linear $\obsop$.
This paper provides, to our knowledge, the first general set of results for constructing disintegrations.

\begin{figure}[t!]
  \subcaptionbox{%
    \label{fig:mode_example:prior}%
    Prior density and observation fibers \figlegendline{TUgray}
  }[0.49\linewidth][l]{%
    \includegraphics[width=\linewidth]{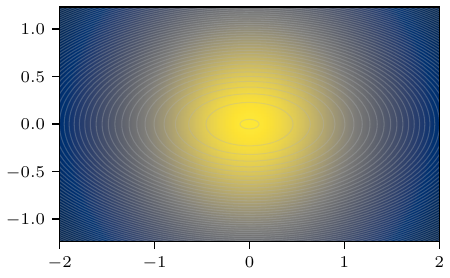}
  }%
  \hfill%
  \subcaptionbox{%
    \label{fig:mode_example:densities}%
    Renormalized restricted density \figlegendline{TUdark} and disintegration density \figlegendline{TUgold} for $y = 1.01$.
  }[0.49\linewidth][l]{%
    \includegraphics[width=\linewidth]{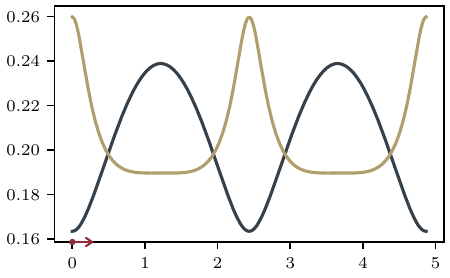}
  }
  \subcaptionbox{%
    \label{fig:mode_example:restriction}%
    Renormalized restricted densities
  }[0.499\linewidth][l]{%
    \includegraphics[width=\linewidth]{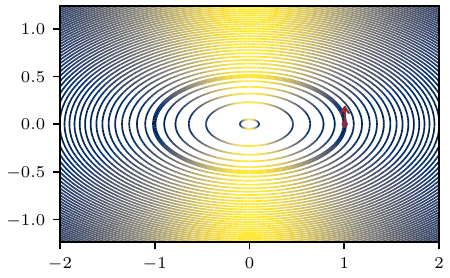}
  }%
  \subcaptionbox{%
    \label{fig:mode_example:disintegration}%
    Disintegration densities
  }[0.499\linewidth][l]{%
    \includegraphics[width=\linewidth]{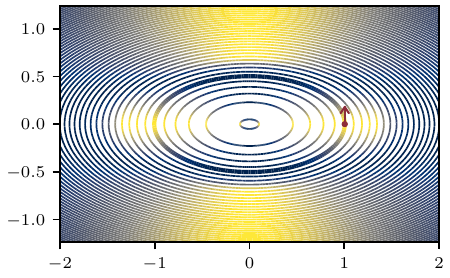}
  }
  \caption{%
    Conditioning a standard Gaussian prior $\prior$ on observations made through the quadratic observation operator $\obsop(\latval) = \frac{\latval_1^2}{a^2} + \frac{\latval_2^2}{b^2}$ with $a = 1$ and $b = \frac{1}{2}$.
    In machine learning folklore, the corresponding conditional distribution is constructed by first restricting the prior's density $\dv{\prior}{\lebesgue}$ to the observation fiber $\obsfib$, which is then renormalized w.r.t.~the canonical volume measure on the fiber.
    However, in general, the law of total probability fails to hold for this restricted measure.
    Our main result, \cref{thm:disintegration-density}, shows that an additional corrective factor of $\norm{\nabla h(x)}_2^{-1}$ needs to be multiplied into this restricted density to obtain the density of the true conditional distribution, which is defined implicitly through the notion of a \emph{disintegration}.
    Intuitively speaking, this is due to the fact that, for the law of total probability to hold, the predictive probability mass has to be distributed between fewer fibers where the fibers lie less dense, which is the case if $\norm{\nabla h(x)}_2$ is small (see plots above).
    The plots show a stark disagreement between the restricted and disintegration densities.
    Notably the modes of the measures differ maximally.
    See \cref{ex:gaussian-ellipse} for details.
  }
  \label{fig:mode_example}
\end{figure}

Transferring intuition from the density setting, a ``folklore'' result in machine learning conflates disintegration of a distribution with restriction of the distribution to the submanifold on which the conditional is supported.
For examples of this see e.g.\ \citep[Figure 2.18]{bishop_pattern_2006}, which describes the von Mises distribution as being obtained by ``considering a two-dimensional Gaussian distribution [...] and conditioning on the unit circle''.
In fact this is a \emph{restriction} rather than a conditional, and our novel results highlight that these can differ dramatically.
In \cref{fig:mode_example} we provide a (disturbingly) simple example related to this distribution in which restriction and disintegration result in radically different distributions.
This example takes an ambient standard Gaussian measure on $\R^2$ (\cref{fig:mode_example:prior}) and contrasts restricting (\cref{fig:mode_example:restriction}) and disintegrating (\cref{fig:mode_example:disintegration}) the measure on successively larger ellipses; see \cref{ex:gaussian-ellipse} for more detail.
The colour maps on contours in \cref{fig:mode_example:restriction,fig:mode_example:disintegration} depict appropriate densities for the restriction or disintegration, respectively.
Notably, for smaller ellipses, the mass concentrates at different poles between the two approaches.

A special case of our characterisation result on $\R^d$ is reproduced below:

\begin{theorem}
  \label{thm:disint-density-Rd}
  Let $\prior$ be a Borel probability measure on $\R^d$ that has a density $\dv{\prior}{\lebesgue}$ w.r.t.~the Lebesgue measure $\lebesgue$ on $\R^d$.
  Let $\obsop \colon \R^d \to \R^n$ such that $\prior$-almost all points $\latval \in \R^d$ are $C^1$ regular (see \cref{def:regular-point}).
  Then there exists a version of the disintegration $\priordisint[]$ with probability densities\footnote{The densities are with respect to the restriction $\lebesgue_{\obsfib}$ of the Lebesgue measure to the observation fiber $\obsfib$ (see \cref{def:restricted-measure}).}
  \begin{equation*}
    \dv{\priordisint}{\lebesgue_{\obsfib}}{(\latval)}
    \propto
    \dv{\prior}{\lebesgue}{(\latval)} \cdot \abs{\det(\jacobian{\obsop}(\latval)\rvert_{\kernel{\jacobian{\obsop}(\latval)}^\perp})}\inv
  \end{equation*}
  for all $C^1$ regular values $\obs \in \R^n$, where $\lebesgue_{\obsfib}$ is the restricted Lebesgue measure to the fiber $\obsfib$ (see \cref{def:restricted-measure}).
\end{theorem}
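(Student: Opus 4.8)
The plan is to build an explicit version of the disintegration directly from the \emph{coarea formula} and then read off its density. The first step is a reduction to a submersion. Let $R \subseteq \R^d$ be the set of $C^1$ regular points of $\obsop$ (\cref{def:regular-point}). Since $C^1$ regularity is a local and open condition, $R$ is open, $\obsop$ restricts on $R$ to a $C^1$ submersion, and by hypothesis $\prior(\comp R) = 0$, so $\prior$ agrees with its restriction to $R$ as a Borel measure; hence disintegrating $\prior$ along $\obsop$ is the same problem as disintegrating the $R$-restriction of $\prior$ along the submersion $\restrict{\obsop}{R}{}$, whose pushforward is again $\predictive$. On $R$ the coarea Jacobian $J_{\obsop}(\latval) \defeq \sqrt{\det(\jacobian{\obsop}(\latval)\jacobian{\obsop}(\latval)\T)}$ is continuous and strictly positive, since $\jacobian{\obsop}(\latval)$ has full rank $n$ there.

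Applying the coarea formula on $R$ to the nonnegative integrand $\mathbbm{1}_{\obsop\inv(B)}(\latval)\,\dv{\prior}{\lebesgue}{(\latval)}\,J_{\obsop}(\latval)\inv$ --- whose product with $J_{\obsop}$ is simply $\mathbbm{1}_{\obsop\inv(B)}\dv{\prior}{\lebesgue}$ --- shows that $\predictive(B) = \prior(\obsop\inv(B)) = \int_B \rho(\obs)\,\mathrm{d}\obs$ for every Borel $B \subseteq \R^n$, where
\[
  \rho(\obs) \defeq \int_{\obsfib} \dv{\prior}{\lebesgue}{(\latval)}\,J_{\obsop}(\latval)\inv\,\mathrm{d}\hh^{d-n}(\latval)
\]
(with the understanding that the fibre integrals here and below run over $\obsfib \cap R$, which equals $\obsfib$ at a regular value). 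Thus $\predictive \ll \lebesgue$ with Lebesgue density $\rho$, measurable by the coarea statement. For $\obs$ with $\rho(\obs) \in (0,\infty)$ I define the Borel probability measure $\nu_\obs$ on $\R^d$, concentrated on $\obsfib$, by $\nu_\obs(A) \defeq \rho(\obs)\inv \int_{A \cap \obsfib} \dv{\prior}{\lebesgue}{(\latval)}\,J_{\obsop}(\latval)\inv\,\mathrm{d}\hh^{d-n}(\latval)$, and for the remaining (then irrelevant) $\obs$ I set $\nu_\obs$ to a fixed arbitrary probability measure.

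To see that $\{\nu_\obs\}_{\obs \in \R^n}$ is a version of the disintegration (\cref{def:disintegration}), note that $\obs \mapsto \nu_\obs(A)$ is Borel by the measurable form of the coarea formula, and that $\{\rho = 0\}$ and $\{\rho = \infty\}$ are $\predictive$-null (the first because $\predictive = \rho\,\lebesgue$, the second because $\rho$ is integrable). Hence integrating $\nu_\obs(A)$ against $\predictive = \rho\,\lebesgue$ makes the normalisers cancel, and a second application of the coarea formula (with integrand $\mathbbm{1}_A\,\dv{\prior}{\lebesgue}\,J_{\obsop}\inv$) gives
\[
  \int_{\R^n} \nu_\obs(A)\,\mathrm{d}\predictive(\obs)
  = \int_{\R^n}\!\int_{A \cap \obsfib} \dv{\prior}{\lebesgue}{(\latval)}\,J_{\obsop}(\latval)\inv\,\mathrm{d}\hh^{d-n}(\latval)\,\mathrm{d}\obs
  = \int_A \dv{\prior}{\lebesgue}{(\latval)}\,\mathrm{d}\lebesgue(\latval) = \prior(A)
\]
for all Borel $A \subseteq \R^d$, which is the law of total probability.

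Finally I extract the density. Fix a $C^1$ regular value $\obs$; then $\obsfib \subseteq R$ is a $C^1$ embedded $(d-n)$-submanifold whose canonical volume measure is $\lebesgue_{\obsfib} = \restrict{\hh^{d-n}}{\obsfib}{}$ (\cref{def:restricted-measure}), so $\dv{\priordisint}{\lebesgue_{\obsfib}}{(\latval)} = \rho(\obs)\inv\,\dv{\prior}{\lebesgue}{(\latval)}\,J_{\obsop}(\latval)\inv$, and the claim follows from the identity $J_{\obsop}(\latval) = \abs{\det(\jacobian{\obsop}(\latval)\rvert_{\kernel{\jacobian{\obsop}(\latval)}^\perp})}$, valid for full-rank $\jacobian{\obsop}(\latval)$ because both sides equal the product of its nonzero singular values. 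The coarea formula does essentially all the work, so the real labour is bookkeeping: I expect the main obstacle to be checking that $\obs \mapsto \nu_\obs$ is a genuine probability kernel (i.e.\ measurability of $\obs \mapsto \nu_\obs(A)$), which I would address via the measurable rather than merely integral form of the coarea formula; further care is needed to justify the initial reduction to $R$ and to track $\obsfib$ versus $\obsfib \cap R$ away from regular values. Notably, no appeal to Sard's theorem is needed, since the passage to $R$ removes all critical points before the coarea formula is invoked; this is also consistent with the statement being the $\R^d$ specialisation of the general manifold characterisation \cref{thm:disintegration-density}.
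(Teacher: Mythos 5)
Your proof is correct, but it takes a genuinely different route from the paper's. The paper derives \cref{thm:disint-density-Rd} as the Euclidean specialisation of \cref{thm:disintegration-density}, which is itself proved ``from the ground up'': the implicit function theorem supplies a local flattening chart $\varphi\colon U \to V\times W$ with $\pi_V\circ\varphi = \obsop$, the building-block lemmas (disintegration w.r.t.\ the identity, of product measures, of pushforwards, of dominated measures, of restrictions) are chained to disintegrate the pushed-forward volume measure as $\delta_\obs\times\lebesgue_W$ up to the volume-form factor, and the local statements are patched together via \cref{lem:disintegration-restriction}. You instead construct the kernel in closed form via the coarea formula and verify the two defining properties of \cref{def:disintegration} directly; this is essentially the route of \citet{diaconis_sampling_2013}, which the paper explicitly acknowledges in \cref{sec:related-work} as the alternative it chose not to take. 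Your approach buys a shorter, self-contained Euclidean argument that additionally produces the predictive density $\rho = \dv*{\predictive}{\lebesgue}$ and the normalising constant explicitly, at the cost of importing the coarea formula (including its measurability content, which you correctly identify as the place where care is needed to obtain a genuine Borel transition kernel — the fibre-integral function is a priori only Lebesgue measurable and must be modified on a $\predictive$-null set). The paper's approach buys generality (arbitrary Riemannian $\latsp$, arbitrary metric used only as a computational device) and reusable intermediate lemmas, while avoiding geometric measure theory. Your reduction to the open set $R$ of regular points, the identity $\sqrt{\det(\jacobian{\obsop}\,\jacobian{\obsop}\T)} = \abs{\det(\jacobian{\obsop}\rvert_{\kernel{\jacobian{\obsop}}^\perp})}$, the identification of $\lebesgue_{\obsfib}$ with restricted $(d-n)$-dimensional Hausdorff measure on the $C^1$ fibre, and the observation that Sard's theorem is not needed are all sound; the only residual edge case (shared with the paper's formulation) is that the proportionality statement is vacuous at regular values where $\rho(\obs)=0$.
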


\cref{thm:disint-density-Rd} shows that the disintegration density differs from the restriction density by a corrective factor depending on the Jacobian of $\obsop$.
See \cref{thm:disintegration-density} for further details. In the case where $\obsop$ is linear, this Jacobian is constant, and we hence recover the classical proportionality formula for densities $\dv*{\priordisint}{\lebesgue_{\obsfib}}
    \propto
    \dv*{\prior}{\lebesgue}$.

To demonstrate the importance of our results we focus on the problem of mode computation, i.e., computation of maximum a-posteriori (MAP) points in Bayesian computation.
MAP estimation summarises a distribution with a single, representative estimate that can be interpreted informally as a ``point of maximal probability''.
This is particularly significant in Bayesian inference.
In many practical settings sampling from a full posterior is computationally expensive and a point estimate is sufficient for decision-making or further modelling.
MAP estimates are also of central use in many posterior approximation settings, where an approximation to the posterior is often constructed to have the same mode as the true posterior.

Modes are traditionally defined as maximisers of a Lebesgue density, or more generally, as centers that supremise a particular ratio of open metric balls compared to other candidate centers as the radius of the balls tends to zero.
The aforementioned difficulties in characterising disintegrations have led some works to propose that MAP points be computed by constraining this maximisation problem to the submanifold on which the conditional is supported---in other words, computing modal points of the restricted measure rather than the disintegrated measure---often resulting in a computationally tractable procedure.
We refer to such points as \emph{restricted modes}, and our work highlights that restricted modes can differ drastically from modes of a disintegration.
To give some specific examples of this practice:
\begin{itemize}
    \item \citet[Definition 3]{Chen2024GaussianConditionalMAP} defines a \emph{conditional mode} precisely as above to construct samplers for Gaussian processes conditioned on a particular class of nonlinear, noise-free observations.
    In fact, this computes a \emph{restricted mode}, potentially meaning that the posterior approximation techniques they propose approximate the restriction rather than the conditional or, in the worst case, do not approximate either distribution.
    \item \citet{Cinquin2024FSPLaplace} propose a Laplace approximation for Bayesian neural networks which computes the MAP estimate of the network weights under a function-space prior that is restricted to the set of functions representable by the neural network.
    \item \citet[Section~3]{Tronarp2023BayesianODEMAP} defines the MAP estimator of a probabilistic ODE solver again as above, resulting in similar potential pitfalls to \cite{Chen2024GaussianConditionalMAP}.
\end{itemize}
A special case of our novel result \cref{thm:rcp-weak-modes}, as it applies on $\R^d$, is reproduced below, to highlight that modes of a disintegration meaningfully differ from restricted modes in a way that depends on the observation operator $\obsop$.
\begin{theorem}
  \label{thm:rcp-weak-modes-Rd}
  Let $\prior$ be a Borel probability measure on $\R^d$ that has a continuous density $\dv{\prior}{\lebesgue}$ w.r.t.~the Lebesgue measure $\lebesgue$ on $\R^d$.
  Let $\obsop \colon \R^d \to \R^n$ such that $\prior$-almost all points $\latval \in \R^d$ are $C^1$ regular (see \cref{def:regular-point}).
  Then there is a version of the disintegration $\priordisint[]$ whose weak modes are given by
  \begin{equation*}
    \begin{alignedat}{4}
      \argmin_{{\latval} \in \R^d} & \quad && - \log \dv{\prior}{\lebesgue}{({\latval})} + \log \abs{\det \jacobian{{\obsop}}({\latval}) \vert_{\kernel{\jacobian{{\obsop}}({\latval})}^\perp}} \\
      \textnormal{s.t.} &&& {\obsop}({\latval}) = {\obs}
    \end{alignedat}
  \end{equation*}
  for all $C^1$ regular values $\obs \in \R^n$.
\end{theorem}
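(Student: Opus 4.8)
The plan is to combine the explicit density formula of \cref{thm:disint-density-Rd} with the general principle that a probability measure concentrated on a $C^1$ submanifold of $\R^d$, carrying a continuous density against the induced surface measure, has as its weak modes exactly the points at which that density attains its supremum. Fix a $C^1$ regular value $\obs \in \R^n$ with nonempty fiber, and set $M \defeq \obsfib$ and $m \defeq d - n$. By the regular value theorem, $M$ is a $C^1$ embedded submanifold of $\R^d$ of dimension $m$, and the restricted Lebesgue measure $\lebesgue_{\obsfib}$ of \cref{def:restricted-measure} is the induced $m$-dimensional surface measure on $M$. Taking the version of the disintegration furnished by \cref{thm:disint-density-Rd} and writing $\nu_\obs \defeq \priordisint$ for the corresponding conditional probability measure, $\nu_\obs$ has a density
\begin{equation*}
  \rho_\obs(\latval) \propto \dv{\prior}{\lebesgue}{(\latval)} \cdot \abs{\det(\jacobian{\obsop}(\latval)\rvert_{\kernel{\jacobian{\obsop}(\latval)}^\perp})}\inv
\end{equation*}
with respect to $\lebesgue_{\obsfib}$. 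First I would record that $\rho_\obs$ is continuous on $M$: $\dv{\prior}{\lebesgue}$ is continuous by hypothesis, and since every $\latval \in M$ is a $C^1$ regular point, $\jacobian{\obsop}(\latval)$ has full rank $n$, so $\jacobian{\obsop}(\latval)\rvert_{\kernel{\jacobian{\obsop}(\latval)}^\perp}$ is a linear isomorphism onto $\R^n$ whose determinant is nonzero and varies continuously in $\latval$ (the complement $\kernel{\jacobian{\obsop}(\latval)}^\perp$ depending continuously on $\latval$ since the rank is locally constant).

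The core of the argument is the small-ball computation. I would show that for every $\latval \in M$,
\begin{equation*}
  \nu_\obs(\oball{r}{\latval}) = \rho_\obs(\latval)\,\volmeas{m}\,r^{m}\bigl(1 + o(1)\bigr) \qquad \text{as } r \downarrow 0 ,
\end{equation*}
where $\volmeas{m}$ is the volume of the unit ball in $\R^m$: $\nu_\obs$ is concentrated on the $C^1$ manifold $M$, so the slice $\oball{r}{\latval} \cap M$ is, to first order in $r$, an $m$-dimensional Euclidean disk of radius $r$ in the tangent space $T_\latval M$, and $\rho_\obs$ is continuous at $\latval$ --- with, crucially, the leading constant $\volmeas{m}$ the same at every point. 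Since $M$ is closed, points off $M$ are not in the support of $\nu_\obs$, and it follows that for $\latval, \latval' \in \R^d$ with $\obsop(\latval) = \obsop(\latval') = \obs$ one has $\limsup_{r\downarrow 0} \nu_\obs(\oball{r}{\latval'})/\nu_\obs(\oball{r}{\latval}) = \rho_\obs(\latval')/\rho_\obs(\latval)$, while this ratio tends to $0$ when $\latval' \notin M$. Hence $\latval$ is a weak mode of $\nu_\obs$ if and only if $\rho_\obs(\latval) = \sup_{M} \rho_\obs$.

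Finally, applying the strictly decreasing map $-\log(\cdot)$ and absorbing the unknown normalising constant into an $\latval$-independent additive term, the weak modes of $\nu_\obs$ are precisely
\begin{equation*}
  \argmin_{\substack{\latval \in \R^d \\ \obsop(\latval) = \obs}} \Bigl( -\log \dv{\prior}{\lebesgue}{(\latval)} + \log \abs{\det \jacobian{\obsop}(\latval) \rvert_{\kernel{\jacobian{\obsop}(\latval)}^\perp}} \Bigr) ,
\end{equation*}
which is the asserted formula (points where $\dv{\prior}{\lebesgue}$ vanishes contribute value $+\infty$ and are automatically excluded, consistently with their never being weak modes). The step I expect to be the main obstacle is the small-ball asymptotic above: matching the ambient metric balls $\oball{r}{\latval}$ with the surface measure on a merely $C^1$ --- not $C^2$ --- submanifold needs care, because the first-order flatness that makes $\oball{r}{\latval} \cap M$ close to a flat $m$-disk holds but without a rate uniform in $\latval$. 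Only a pointwise-in-$\latval$ limit is required here, so $C^1$ regularity and continuity of $\rho_\obs$ suffice; this is exactly the analysis underlying the general \cref{thm:rcp-weak-modes}, of which the present statement is the Euclidean special case, so concretely I would deduce the claim from that theorem after specialising the ambient space, the reference volume measure, and the density to the present setting.
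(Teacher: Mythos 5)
Your proposal is correct and follows essentially the same route as the paper: it combines the density formula of \cref{thm:disint-density-Rd} with the small-ball asymptotic $\volmeas{\obsfib}(\oball{r}{\latval}) = V_{d-n}\,r^{d-n} + o(r^{d-n})$ for ambient Euclidean balls intersected with the fiber, which is exactly the content of \cref{prop:ambient-riemannian-balls}, packaged by the paper through Onsager--Machlup functionals in \cref{prop:omfctl-volmeas,cor:rcp-weak-modes-riemann,thm:omfctl-weak-modes}. The one caveat, which you correctly flag yourself, is that the paper establishes this asymptotic via the exponential map under $C^2$ regularity (\cref{asm:prior-observation-C2-riemannian}), so your assertion that a pointwise $C^1$ argument suffices goes beyond what the paper proves---but this discrepancy between $C^1$ in the statement and $C^2$ in the supporting lemmas is present in the paper itself and is not a defect of your argument relative to it.
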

The proof can be found in \cref{sec:rcp-weak-modes}.
Restricted modes are obtained by dropping the $\log\det$ term in \cref{thm:rcp-weak-modes-Rd}, and so only coincide with modes of the disintegration in settings where this term is constant, such as for linear $\obsop$.

\cref{fig:mode_example:densities} graphically illustrates the severe impact this can have by contrasting densities on a particular ellipse in the wider setting of \cref{fig:mode_example}.
Disintegration and restriction show almost opposite modes in this setting.

\subsection{Related Work}
\label{sec:related-work}
\paragraph{Disintegrations}
Disintegrations have received limited attention in the literature.
\citet{tjur_constructive_1975} and \citet[Chapter~9]{tjur_probability_1980} each developed disintegrations (also referred to as decompositions) and studied their existence, almost-sure uniqueness and continuity properties under an assumption of a continuous observation operator $\obsop$ and completely regular $\latsp,\obssp$.
\citet{dellacherie_probabilities_1978} provided existence results under weaker assumptions (only requiring $\obsop$ to be measurable, but with a separability assumption on $\obssp$).
\cite{chang_conditioning_1997} advocate for disintegrations as the prototype for rigorous conditional distributions, rather than relying on intuition derived from taking limits of conditionals derived in more ``well-behaved'' settings.

\cite{diaconis_sampling_2013} obtain a related formula to our construction of disintegrations, in the special case of Euclidean spaces. Instead of constructing disintegrations from the ground up, they apply the co-area formula. Other recent works include \cite{possobon_geometric_2022}, which studies the regularity of the disintegration map as a function of the observation.
\cite{cockayne_bayesian_2019} propose a sampling routine which, under some assumptions, approximately samples a disintegration.
However these assumptions are difficult to verify.

\paragraph{Modes}

Recent years have seen a surge in research into the theoretical definition of modal points of distributions, enabling a departure from the familiar notion of a MAP point as the maximiser of a Lebesgue density.
The modern, more general view was initially conceived by \citet{Dashti2013MAPBayesianInverseProblems}, who proposed to seek points that maximise the limit of a ratio of small ball probabilities with different candidate centers, tractably realised through the connection to an \emph{Onsager-Machlup} functional.
They studied the properties of MAP points in separable Hilbert spaces for distributions dominated by a Gaussian reference measure.
The MAP estimator of \citet{Dashti2013MAPBayesianInverseProblems} was termed a \emph{strong MAP estimator} by \citet{Helin2015MaximumPosterioriProbability}, who introduced the idea of a \emph{weak MAP estimator}, weakening the definition by comparing balls centered at a candidate MAP point to shifted balls, where shifts are limited to a topologically dense subset.
\citet{Lie2018EquivalenceWeakStrong} showed that these two definitions are in fact equivalent under an additional uniformity condition on the measure.

In more recent works, \citet{clason_generalized_2019} introduced the idea of a \emph{generalised mode} that studies ball ratios in a more general asymptotic sense, studying the limiting ball ratio along sequences of points approaching candidate MAP points.
This prompted \citet{klebanov_classification_2025} to note the similarity of the three existing mode definitions and generate and study a family of feasible alternative definitions by constructing a grammar from mathematical symbols in the definitions.
Of the 282 ``grammatically correct'' definitions generated, ten were deemed meaningful in terms of satisfying some natural properties to expect of such definitions.
\citet{Ayanbayev2022GammaConvergence} studies $\Gamma$-convergence of global weak modes and demonstrates some of the difficulties extending this approach to strong modes.
\citet{Lambley2023StrongMAP} studies strong modes of Bayesian posterior measures in Banach spaces, under Gaussian priors, and demonstrates their equivalence with other notions of modes.

Specifically studying MAP points in the setting of this paper (i.e.\ when the support of the conditional measure has no prior mass) has received much less attention.
As discussed above both \citet{Chen2024GaussianConditionalMAP} and \citet{Tronarp2023BayesianODEMAP} construct what we call \emph{restricted modes} in pursuit of the MAP of a disintegration.
\citet{chen_solving_2021} and \citet{Cinquin2024FSPLaplace} adopt a similar approach.

\subsection{Contributions}

The main contributions of our work are as follows:
\begin{itemize}
    \item We prove several intermediate important properties of disintegrations, leading to a formula for the density of a disintegration on a Riemannian manifold $\latsp$, with-respect-to the volume measure on the fibers $\obsop^{-1}(\set{y})$.
    \item We characterise modes of disintegrations and of (Riemannian) restricted measures, highlighting their disagreement in general.
    \item We illustrate numerically the potentially large extent of disagreement between the two notions of modes on a simple toy problem, that is nevertheless representative of contemporary applications.
\end{itemize}

\subsection{Structure of the Paper}

The rest of the paper proceeds as follows.
In \cref{sec:disintegration} we introduce disintegrations and prove several intermediate results leading to our constructive definition of disintegrations on Riemannian manifolds in \cref{sec:disintegration-manifolds}.
\cref{sec:rcp-weak-modes} focuses on how these ideas pertain to the computation of modes of disintegrations, with \cref{sec:rcp-conditional-modes-vs-weak-modes} proving disagreement of restricted modes and disintegration modes and \cref{sec:failure-modes} providing a more explicit characterisation of failure modes in $\R^d$.
We conclude with some discussion on the implications of these results in \cref{sec:discussion}.

\section{Constructive Disintegration}
\label{sec:disintegration}
We begin by introducing transition and Markov kernels, which are fundamental to the definition of a disintegration.
Let $(\latsp, \sigalg_\latsp)$ and $(\obssp, \sigalg_\obssp)$ be measurable spaces.
\begin{definition}[Transition Kernel]
  \label{def:transition-kernel}
  A function $\kappa\condps{\openarg \given \openarg} \colon \sigalg_\latsp \times \obssp \to [0, \infty]$ is called a \emph{transition kernel} from $\obssp$ to $\latsp$ if
  \begin{enumerate}[label=(\roman*)]
    \item \label{item:transition-kernel-measure}
      $\kappa\condps{\openarg \given \obs}$ is a measure on $(\latsp, \sigalg_\latsp)$ for all $\obs \in \obssp$, and
    \item \label{item:transition-kernel-measurable}
      $\kappa\condps{\latset \given \openarg}$ is $\sigalg_\obssp$-$\borelsigalg{[0, \infty]}$-measurable for all $\latset \in \sigalg_\latsp$.
  \end{enumerate}
  If $\kappa\condps{\openarg \given \obs}$ is a probability measure for all $\obs \in \obssp$, then $\kappa$ is called a \emph{Markov kernel}.
\end{definition}
We extend measure-theoretic operations like pushforwards $\pushfw{\kappa}{f}$, integrals $\kappa\condps{f, \obs}$ and $f \kappa$, as well as Radon-Nikodym derivatives $\dv{\kappa}{\nu}$ to transition kernels.

Next we introduce disintegrations as Markov kernels satisfying certain regularity properties.
To this end, let $\prior$ be a measure on $(\latsp, \sigalg_\latsp)$ and
$\obsop \colon \latsp \to \obssp$ a $\sigalg_\latsp$-$\sigalg_\obssp$-measurable function.
\begin{definition}[Disintegration]
  \label{def:disintegration}
  A \emph{disintegration} of $\prior$ with respect to $\obsop$ is a transition kernel $\priordisint[\openarg](\openarg)$ from $\obssp$ to $\latsp$ such that
  \begin{enumerate}[label=(\roman*)]
    \item \label{item:disintegration-support}
      $\priordisint(\latsp \setminus \obsfib) = 0$,
    \item \label{item:disintegration-law-of-total-measure}
      $\prior(\latset) = \int_{\obssp} \priordisint(\latset) \predictive(\dd{\obs})$ for any $\latset \in \sigalg_\latsp$.
  \end{enumerate}
\end{definition}

If $\prior$ is a probability measure, then the elements of the disintegration are sometimes referred to as \emph{regular conditional probability measures}.
In this case, \labelcref{item:disintegration-law-of-total-measure} is equivalent to the law of total probability when conditioning on events of nonzero probability.

\begin{remark}
    By the monotone convergence theorem, \cref{def:disintegration}\ref{item:disintegration-law-of-total-measure} is equivalent to $\prior(f) = \int_{\obssp} \priordisint(f) \predictive(\dd{\obs})$ for any non-negative measurable $f$.
\end{remark}

For the remainder of \cref{sec:disintegration}, we will always work under the following set of assumptions:
\begin{assumption}\label{asm:disintegration-conditions}
  \leavevmode
    \begin{enumerate}[label=(\roman*)]
        \item \label{item:latsp} $\latsp$ is a metrizable topological space equipped with its Borel $\sigma$-algebra, i.e.~$\sigalg_\latsp = \mathcal B(\latsp)$.

        \item \label{item:obssp} $\sigalg_\obssp$ is countably generated and contains all singleton sets $\{y\}$.

        \item \label{item:measure} $\prior$ is a $\sigma$-finite Radon measure on $\latsp$ and its pushforward $\predictive$ is also $\sigma$-finite.
    \end{enumerate}
\end{assumption}

\begin{remark}
    The assumption that $\predictive$ is a $\sigma$-finite measure is non-trivial; for example the projection of the Lebesgue measure on $\R^2$ onto a coordinate axis is not $\sigma$-finite.
\end{remark}

Under \cref{asm:disintegration-conditions}, one can prove existence and uniqueness (up to $\predictive$ null sets) of the disintegration.
As discussed in \cref{sec:introduction}, various forms of this result exist under different and potentially more general assumptions.
The version we introduce is from \citet{chang_conditioning_1997}.
\begin{theorem}[Disintegration Theorem; see {\citealp[][Theorem 1]{chang_conditioning_1997}}]
  \label{thm:disintegration}
  There exists a $\predictive$-almost everywhere uniquely defined disintegration $\priordisint[]$, i.e., the function $\obs \mapsto \priordisint(\openarg)$ is $\predictive$-almost everywhere uniquely determined.
\end{theorem}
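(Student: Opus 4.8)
The plan is to carry out the classical construction of regular conditional probabilities, taking care to produce a kernel that satisfies not merely the law of total measure \labelcref{item:disintegration-law-of-total-measure} but the localized identity $\int_{B'} \priordisint(A)\,\predictive(\dd{\obs}) = \prior(A \cap \obsop\inv(B'))$ for all $A \in \mathcal B(\latsp)$ and $B' \in \sigalg_\obssp$, since the fiber-support property \labelcref{item:disintegration-support} then drops out once $\sigalg_\obssp$ is well behaved. First I would reduce to the case where $\prior$ is finite: since $\predictive$ is $\sigma$-finite, partition $\obssp$ into sets $B_k \in \sigalg_\obssp$ with $\predictive(B_k) < \infty$, note that $\prior(\obsop\inv(B_k)) = \predictive(B_k) < \infty$, disintegrate each finite measure $\prior$ restricted to $\obsop\inv(B_k)$ along $\obsop$, and stitch the resulting Markov kernels back together along the partition; both existence and $\predictive$-almost-sure uniqueness are compatible with this reduction.

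Next I would construct the candidate kernel. Using metrizability of $\latsp$ and inner regularity of the Radon measure $\prior$, fix a $\sigma$-compact set $S \subseteq \latsp$ with $\prior(\latsp \setminus S) = 0$; then $S$ is second countable, so there is a countable algebra $\mathcal A_0$ generating $\mathcal B(S)$, and $\prior$ is inner regular by compact subsets of $S$. For each $A \in \mathcal A_0$ the finite measure $B' \mapsto \prior(A \cap \obsop\inv(B'))$ on $\sigalg_\obssp$ is absolutely continuous with respect to $\predictive$, so the Radon--Nikodym theorem supplies a measurable density $p_A \colon \obssp \to [0,\infty)$ with $\int_{B'} p_A(\obs)\,\predictive(\dd{\obs}) = \prior(A \cap \obsop\inv(B'))$ for all $B' \in \sigalg_\obssp$; taking $A = S$ and recalling that $\predictive$ is the $\obsop$-pushforward of $\prior$ shows $p_S(\obs) = 1$ for $\predictive$-a.e.\ $\obs$. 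Since $\mathcal A_0$ is countable, linearity of Radon--Nikodym derivatives makes finite additivity and monotonicity of $A \mapsto p_A(\obs)$ hold $\predictive$-a.e., so off a $\predictive$-null set this map is a finitely additive probability content on $\mathcal A_0$. The crucial step is to enlarge that null set once more so that, for $\predictive$-a.e.\ $\obs$, $A \mapsto p_A(\obs)$ is in fact \emph{countably} additive on $\mathcal A_0$ --- which holds by the standard criterion that a finitely additive content inner regular with respect to a compact class is automatically $\sigma$-additive, the compact class here being the compact subsets of $S$ --- so that Carath\'eodory's extension theorem yields, for each such $\obs$, a Borel probability measure $\priordisint$ on $S$ (set to a fixed probability measure on the exceptional null set), viewed as a measure on $\latsp$ by assigning no mass outside $S$. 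A monotone-class argument propagates measurability of $\obs \mapsto \priordisint(A)$ from $A \in \mathcal A_0$ to all Borel $A$, so that $\priordisint[]$ is a Markov kernel. (Alternatively, one may at this point simply invoke the textbook existence of regular conditional distributions on the standard Borel space $S$.)

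It then remains to verify the two defining properties and uniqueness. By construction $\int_{B'} \priordisint(A)\,\predictive(\dd{\obs}) = \prior(A \cap \obsop\inv(B'))$ holds for $A \in \mathcal A_0$ and all $B' \in \sigalg_\obssp$, and since both sides are finite measures in $A$ that agree on the generating algebra $\mathcal A_0$, the identity extends to all $A \in \mathcal B(\latsp)$. Taking $B' = \obssp$ gives \labelcref{item:disintegration-law-of-total-measure}. Taking a countable family $\mathcal G$ generating $\sigalg_\obssp$ and specializing to $A = \obsop\inv(B)$ with $B \in \mathcal G$, the identity becomes $\int_{B'} \priordisint(\obsop\inv(B))\,\predictive(\dd{\obs}) = \predictive(B \cap B')$ for all $B'$, which forces $\priordisint(\obsop\inv(B)) = \mathbbm{1}_B(\obs)$ for $\predictive$-a.e.\ $\obs$; discarding the countable union of these exceptional null sets and using that $\sigalg_\obssp$ is countably generated and contains every singleton --- so that the sets $\obsop\inv(B)$ with $B \in \mathcal G$ and $\obs \notin B$ cover $\latsp \setminus \obsfib$ --- gives $\priordisint(\latsp \setminus \obsfib) = 0$ for $\predictive$-a.e.\ $\obs$, and a routine modification on the remaining null set makes this hold for every $\obs$. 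This is the one place where \cref{asm:disintegration-conditions}\ref{item:obssp} is used. For uniqueness, any disintegration $\kappa$ also obeys the localized identity (integrate $\kappa\condps{A \given \obs}$ over $B'$ and use \labelcref{item:disintegration-support,item:disintegration-law-of-total-measure}), so $\obs \mapsto \kappa\condps{A \given \obs}$ is a Radon--Nikodym density of $B' \mapsto \prior(A \cap \obsop\inv(B'))$ for every $A \in \mathcal A_0$; ranging over the countable algebra then pins down $\kappa\condps{\openarg \given \obs} = \priordisint$ for $\predictive$-a.e.\ $\obs$.

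The step I expect to be the real obstacle is the countable-additivity claim in the second paragraph: finite additivity and monotonicity descend to $\predictive$-almost every $\obs$ essentially for free, whereas countable additivity does not, and recovering it almost everywhere is precisely what forces the topological hypotheses --- metrizability of $\latsp$ and Radonness of $\prior$ in \cref{asm:disintegration-conditions} --- into an otherwise purely measure-theoretic statement. The reduction to finite measures, the monotone-class extensions, and the uniqueness argument are routine by comparison.
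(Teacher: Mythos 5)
The paper offers no proof of \cref{thm:disintegration}: it is imported wholesale from \citet[Theorem~1]{chang_conditioning_1997}, so the only ``in-paper'' argument to compare against is that citation. Your proposal is, in substance, a correct reconstruction of the classical proof underlying the cited result: reduce to finite $\prior$ via $\sigma$-finiteness of $\predictive$; concentrate $\prior$ on a $\sigma$-compact (hence second countable) set using the Radon hypothesis; form Radon--Nikodym densities $p_A$ over a countable generating algebra; upgrade the $\predictive$-a.e.\ finitely additive content to a countably additive one via inner regularity with respect to a compact class; extend by Carath\'eodory and a monotone-class argument; and finally extract the fiber-support property from the hypothesis that $\sigalg_\obssp$ is countably generated and contains singletons. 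You also correctly isolate the compact-class step as the place where the topological assumptions of \cref{asm:disintegration-conditions} do real work, and your uniqueness argument via the localized identity and Radon--Nikodym uniqueness on a countable algebra is sound. Two small points deserve tightening. First, to cover $\latsp \setminus \obsfib$ you need not only the sets $\obsop\inv(B)$ with $B \in \mathcal G$ and $\obs \notin B$ but also the complements $\obsop\inv(\obssp \setminus B)$ for $B \ni \obs$; this is immediate if you replace the countable generating family $\mathcal G$ by the (still countable) algebra it generates, since then $\{\obs\}$ is exactly the atom $\bigcap_{B \in \mathcal G,\, \obs \in B} B$ and its complement is the required countable union. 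Second, on the exceptional $\predictive$-null set the fiber $\obsfib$ may be empty, so the ``routine modification'' should assign the zero measure there rather than an arbitrary probability measure; this is permitted because \cref{def:disintegration} only asks for a transition kernel, not a Markov kernel, at every $\obs$. With these adjustments your argument is complete and essentially coincides with the proof of the theorem as it appears in the cited source.
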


\begin{remark}
    \cite{chang_conditioning_1997} allow for disintegrating $\prior$ with respect to a different measure than $\predictive$. For simplicity and clarity, we focus in this paper exclusively on the standard case of disintegrating $\prior$ with respect to its pushforward, though our results can be extended to the more general case.
\end{remark}

When thinking of conditioning, most practitioners will first think of Bayes' rule. To argue that disintegrations provide a rigorous way of conditioning measures more generally, it is crucial to show that, under some assumptions, Bayes' theorem can be recovered from \cref{def:disintegration}.
This is the focus of our first result; a statement of Bayes' theorem purely in terms of disintegrations.

In the next theorem we think of $\latsp = \sublatsp\times \subobssp$, where $\sublatsp$ should be thought as the parameter space, $\subobssp$ the data space, and $\prior$ the joint (probability) measure over  $\sublatsp\times \subobssp$. Hence $\pi_{\sublatsp\star}\prior$ is the prior, $\pi_{\sublatsp\star}(\disint{\prior}{\pi_{\subobssp}}[\subobs])$ is the posterior given data $\subobs\in\subobssp$, and $\pi_{\subobssp \star}(\disint{\prior}{\pi_{\sublatsp}}[\sublatval])$ is the likelihood at parameter $\sublatval\in \sublatsp$.

The above exposition makes it clear that we must disintegrate $\prior$ with respect to both the coordinate projections $\pi_{\subobssp}$ and $\pi_{\sublatsp}$.
Hence, we require $\sublatsp$ and $\subobssp$ to each serve the role of both $\latsp$ and $\obssp$ in \cref{def:disintegration}, and hence to satisfy the assumptions on both spaces from (\cref{asm:disintegration-conditions}).
We phrase these succinctly in the following theorem:
\begin{theorem}[Bayes' Theorem]
    Suppose that both $\sublatsp$ and $\subobssp$ are metrizable separable topological spaces equipped with their Borel $\sigma$-algebra, that $\latsp = \sublatsp \times \subobssp$, and that $\prior$ is a finite Radon measure on $\latsp$. Further suppose that there is a $\sigma$-finite measure $\nu$ on $\subobssp$ such that $\pi_{\subobssp\star}(\disint{\prior}{\pi_{\sublatsp}}[\sublatval]) \ll \nu$ for $\pi_{\sublatsp\star}\mu$-almost all $\sublatval$. Then
    \begin{equation*}
        \dv{(\pi_{\sublatsp\star}(\disint{\prior}{\pi_{\subobssp}}[
          \subobs]))}{(\pi_{\sublatsp\star}\prior)}{(\sublatval)} = \frac{\dv*{(\pi_{\subobssp \star}(\disint{\prior}{\pi_{\sublatsp}}[\sublatval]))}{\nu}{(\subobs)}}{\dv*{(\pi_{\subobssp\star}\prior)}{\nu}{(\subobs)}}
    \end{equation*}
    where the right-hand-side is well-defined for $\pi_{\subobssp\star}\prior$-almost every $\subobs$.
\end{theorem}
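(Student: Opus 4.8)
The plan is to exploit the product structure $\latsp = \sublatsp\times\subobssp$ by disintegrating $\prior$ along \emph{both} coordinate projections and then comparing the two resulting decompositions of $\prior$ as a measure on the product. Write $\prior_{\sublatsp}\defeq\pi_{\sublatsp\star}\prior$ and $\prior_{\subobssp}\defeq\pi_{\subobssp\star}\prior$ for the prior and the evidence, and $\prior_{\subobssp\mid\sublatval}\defeq\pi_{\subobssp\star}(\disint{\prior}{\pi_{\sublatsp}}[\sublatval])$, $\prior_{\sublatsp\mid\subobs}\defeq\pi_{\sublatsp\star}(\disint{\prior}{\pi_{\subobssp}}[\subobs])$ for the likelihood and the posterior. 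First I would verify that the hypotheses place us in the setting of \cref{asm:disintegration-conditions} for each of the two projections, so that \cref{thm:disintegration} applies twice: a separable metrizable space is second countable, hence its Borel $\sigma$-algebra is countably generated and, being $T_1$, contains all singletons, while $\prior$ is finite Radon (hence $\sigma$-finite) and each of its two pushforwards is finite (hence $\sigma$-finite). The structural observation driving everything is that $\disint{\prior}{\pi_{\sublatsp}}[\sublatval]$ is concentrated on the slice $\pi_{\sublatsp}\inv(\set{\sublatval})=\set{\sublatval}\times\subobssp$ by \cref{def:disintegration}\ref{item:disintegration-support}, and is therefore forced to equal $\delta_{\sublatval}\otimes\prior_{\subobssp\mid\sublatval}$; evaluating \cref{def:disintegration}\ref{item:disintegration-law-of-total-measure} on a measurable rectangle $U\times V$ thus gives
\begin{equation*}
  \prior(U\times V)=\int_{U}\prior_{\subobssp\mid\sublatval}(V)\,\prior_{\sublatsp}(\dd{\sublatval})
  \qquad\text{and symmetrically}\qquad
  \prior(U\times V)=\int_{V}\prior_{\sublatsp\mid\subobs}(U)\,\prior_{\subobssp}(\dd{\subobs}).
\end{equation*}

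Next I would turn the likelihood hypothesis into a joint density for $\prior$. Since $\sublatval\mapsto\prior_{\subobssp\mid\sublatval}$ is a transition kernel dominated $\prior_{\sublatsp}$-almost everywhere by $\nu$, the extension of Radon--Nikodym derivatives to transition kernels supplies a jointly measurable $\ell(\sublatval,\subobs)\defeq\dv*{\prior_{\subobssp\mid\sublatval}}{\nu}{(\subobs)}$. Substituting $\prior_{\subobssp\mid\sublatval}(V)=\int_{V}\ell(\sublatval,\subobs)\,\nu(\dd{\subobs})$ into the first identity and applying Tonelli yields $\prior(U\times V)=\int_{U\times V}\ell\,\dd(\prior_{\sublatsp}\otimes\nu)$; since measurable rectangles form a $\pi$-system generating $\borelsigalg{\latsp}$ and both sides are finite measures, the monotone class theorem upgrades this to $\prior=\ell\cdot(\prior_{\sublatsp}\otimes\nu)$. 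Marginalising out $\sublatval$ (again by Tonelli) then shows $\prior_{\subobssp}\ll\nu$ with
\begin{equation*}
  Z(\subobs)\defeq\dv*{\prior_{\subobssp}}{\nu}{(\subobs)}=\int_{\sublatsp}\ell(\sublatval,\subobs)\,\prior_{\sublatsp}(\dd{\sublatval}),
\end{equation*}
and since $\prior_{\subobssp}$ is finite, the sets $\set{Z=0}$ and $\set{Z=\infty}$ are $\prior_{\subobssp}$-null.

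The conclusion then comes from feeding this back into the symmetric identity. Writing $\prior_{\subobssp}(\dd{\subobs})=Z(\subobs)\,\nu(\dd{\subobs})$ and also expanding the left-hand side through $\ell$, one obtains $\int_{V}\prior_{\sublatsp\mid\subobs}(U)\,Z(\subobs)\,\nu(\dd{\subobs})=\int_{V}\bigl(\int_{U}\ell(\sublatval,\subobs)\,\prior_{\sublatsp}(\dd{\sublatval})\bigr)\,\nu(\dd{\subobs})$ for every $V$, hence $\prior_{\sublatsp\mid\subobs}(U)\,Z(\subobs)=\int_{U}\ell(\sublatval,\subobs)\,\prior_{\sublatsp}(\dd{\sublatval})$ for $\nu$-almost every $\subobs$. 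Dividing by $Z(\subobs)>0$, which holds off a $\prior_{\subobssp}$-null set, identifies $\dv*{\prior_{\sublatsp\mid\subobs}}{\prior_{\sublatsp}}{(\sublatval)}=\ell(\sublatval,\subobs)/Z(\subobs)$, which is exactly the claimed formula once $\ell(\sublatval,\subobs)$ is read back as $\dv*{\prior_{\subobssp\mid\sublatval}}{\nu}{(\subobs)}$.

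The step I expect to require the most care is this last piece of null-set bookkeeping: the identity $\prior_{\sublatsp\mid\subobs}(U)\,Z(\subobs)=\int_{U}\ell(\sublatval,\subobs)\,\prior_{\sublatsp}(\dd{\sublatval})$ is a priori only valid off a $\nu$-null set that depends on the chosen test set $U$, whereas the statement's ``well-defined for $\prior_{\subobssp}$-almost every $\subobs$'' asks for a single exceptional set good for all $U$ simultaneously. To remedy this I would first run the argument over a countable algebra generating $\borelsigalg{\sublatsp}$---which exists because $\sublatsp$ is separable metrizable---take the countable union of the resulting null sets, and then extend from that algebra to all of $\borelsigalg{\sublatsp}$ using that, for $\nu$-almost every $\subobs$, both $U\mapsto\prior_{\sublatsp\mid\subobs}(U)\,Z(\subobs)$ and $U\mapsto\int_{U}\ell(\sublatval,\subobs)\,\prior_{\sublatsp}(\dd{\sublatval})$ are finite measures agreeing on a generating $\pi$-system. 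A minor subsidiary point is the joint measurability of $\ell$, which we take from the announced extension of Radon--Nikodym derivatives to transition kernels; alternatively one can establish $\prior\ll\prior_{\sublatsp}\otimes\nu$ directly from the $\pi_{\sublatsp}$-disintegration and set $\ell\defeq\dv*{\prior}{(\prior_{\sublatsp}\otimes\nu)}$, recovering $\ell(\sublatval,\openarg)=\dv*{\prior_{\subobssp\mid\sublatval}}{\nu}$ for $\prior_{\sublatsp}$-almost every $\sublatval$ afterwards.
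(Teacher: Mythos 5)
Your proposal is correct, and it reaches the formula by a route that differs in mechanics from the paper's. The paper proceeds by ``guess and verify'': it writes down the candidate
$\disint{\prior}{\pi_{\subobssp}}[\subobs] = \frac{\dv*{(\pi_{\subobssp\star}(\disint{\prior}{\pi_{\sublatsp}}[\sublatval']))}{\nu}{(\subobs)}}{\dv*{(\pi_{\subobssp\star}\prior)}{\nu}{(\subobs)}}\,\pi_{\sublatsp\star}\prior(\dd{\sublatval'})\,\delta_{\subobs}(\dd{\subobs'})$
as a measure on the product space, checks \cref{def:disintegration}\labelcref{item:disintegration-support} trivially and \labelcref{item:disintegration-law-of-total-measure} on measurable rectangles via Fubini and the $\pi_{\sublatsp}$-disintegration, and then lets the $\predictive$-a.e.\ uniqueness in \cref{thm:disintegration} do the remaining identification --- this absorbs all the null-set bookkeeping into the uniqueness statement. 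You instead work forward from both coordinate disintegrations, pass through the intermediate joint density $\prior = \ell\cdot(\pi_{\sublatsp\star}\prior\otimes\nu)$ with $\ell(\sublatval,\subobs) = \dv*{(\pi_{\subobssp\star}(\disint{\prior}{\pi_{\sublatsp}}[\sublatval]))}{\nu}{(\subobs)}$, and equate the two Fubini decompositions of $\prior$ on rectangles; this makes the ``likelihood times prior over evidence'' structure explicit and is arguably more illuminating, but it forces you to handle the exceptional-set dependence on the test set $U$ by hand. Your remedy --- running the argument over a countable generating algebra (available since $\sublatsp$ is separable metrizable) and extending by the $\pi$-system lemma for the resulting pair of finite measures --- is exactly the right fix, and your observation that $\{Z=0\}$ is $\pi_{\subobssp\star}\prior$-null, so that division is licit off a $\pi_{\subobssp\star}\prior$-null set, matches the paper's use of the set $\tilde{\subobsset}$. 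The only point deserving explicit care, which you already flag, is the joint measurability of $\ell$; your fallback of defining $\ell \defeq \dv*{\prior}{(\pi_{\sublatsp\star}\prior\otimes\nu)}$ after establishing absolute continuity from the $\pi_{\sublatsp}$-disintegration is a clean way to avoid relying on a measurable selection of Radon--Nikodym derivatives.
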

\begin{proof}
    We will show
    \begin{equation*}
        \disint{\prior}{\pi_{\subobssp}}[\subobs](\dd \subobs'\times \dd \sublatval') = \frac{\dv*{(\pi_{\subobssp \star}(\disint{\prior}{\pi_{\sublatsp}}[\sublatval']))}{\nu}{(\subobs)}}{\dv*{(\pi_{\subobssp\star}\prior)}{\nu}{(\subobs)}}\pi_{\sublatsp\star}\prior(\dd \sublatval') \delta_{\subobs}(\dd \subobs')
    \end{equation*}
    where $\delta_{\subobs}$ is the Dirac measure at $\subobs$. Clearly, the right-hand-side is supported on $\pi^{-1}_{\subobssp}(\{\subobs\}) = \{\subobs\} \times\sublatsp$, so it satisfies \cref{def:disintegration}\labelcref{item:disintegration-support}. Now for $\subobsset\subset \subobssp$ and $\sublatset\subset \sublatsp$ measurable, define
    \begin{equation*}
      \tilde{\subobsset}
      \defeq \subobsset \cap \set{\subobs \in \subobssp \where \dv*{(\pi_{\subobssp\star}\prior)}{\nu}{(\subobs)} \neq 0}.
    \end{equation*}
    Now $\nu$ is absolutely continuous with respect to $\pi_{\subobssp\star}\prior$ when restricted to $\tilde{Z}$ with
    \begin{equation*}
      \dv{\nu}{(\pi_{\subobssp\star}\prior)}{(\subobs)}
      = \left( \dv{(\pi_{\subobssp\star}\prior)}{\nu}{(\subobs)} \right)^{-1}.
    \end{equation*}
    Then
    \begin{align*}
      & \int_{\subobssp} \left(\int_\sublatset\frac{\dv*{(\pi_{\subobssp \star}(\disint{\prior}{\pi_{\sublatsp}}[\sublatval]))}{\nu}{(\subobs)}}{\dv*{(\pi_{\subobssp\star}\prior)}{\nu}{(\subobs)}}\pi_{\sublatsp\star}\prior(\dd \sublatval)\right)\cdot \delta_{\subobs}(\subobsset) \pi_{\subobssp\star}\prior(\dd \subobs) \\
      & = \int_{\tilde{\subobsset}}\int_\sublatset \frac{\dv*{(\pi_{\subobssp \star}(\disint{\prior}{\pi_{\sublatsp}}[\sublatval]))}{\nu}{(\subobs)}}{\dv*{(\pi_{\subobssp\star}\prior)}{\nu}{(\subobs)}} \pi_{\sublatsp\star}\prior(\dd \sublatval)\pi_{\subobssp\star}\prior(\dd\subobs) \\
      & =\int_\sublatset\int_{\tilde{\subobsset}} \frac{\dv*{(\pi_{\subobssp \star}(\disint{\prior}{\pi_{\sublatsp}}[\sublatval]))}{\nu}{(\subobs)}}{\dv*{(\pi_{\subobssp\star}\prior)}{\nu}{(\subobs)}} \pi_{\subobssp\star}\prior(\dd\subobs)\pi_{\sublatsp\star}\prior(\dd \sublatval) \\
      & =\int_\sublatset\int_{\tilde{\subobsset}} \dv{(\pi_{\subobssp \star}(\disint{\prior}{\pi_{\sublatsp}}[\sublatval]))}{\nu}{(\subobs)} \dv{\nu}{(\pi_{\subobssp\star}\prior)}{(\subobs)} \pi_{\subobssp\star}\prior(\dd\subobs)\pi_{\sublatsp\star}\prior(\dd \sublatval) \\
      & =\int_\sublatset\int_{\tilde{\subobsset}} \dv{(\pi_{\subobssp \star}(\disint{\prior}{\pi_{\sublatsp}}[\sublatval]))}{\nu}{(\subobs)}\nu(\dd \subobs)\pi_{\sublatsp\star}\prior(\dd \sublatval) \\
      & = \int_\sublatset \pi_{\subobssp \star}(\disint{\prior}{\pi_{\sublatsp}}[\sublatval])(\tilde{\subobsset}) \pi_{\sublatsp\star}\prior(\dd \sublatval) \\
      & = \int_\sublatset \disint{\prior}{\pi_{\sublatsp}}[\sublatval]( \sublatsp \times \tilde{\subobsset}) \pi_{\sublatsp\star}\prior(\dd \sublatval) \\
      & = \int_{\sublatsp} \disint{\prior}{\pi_{\sublatsp}}[\sublatval]( \sublatset \times \tilde{\subobsset}) \pi_{\sublatsp\star}\prior(\dd \sublatval) \\
      & = \prior(\sublatset \times \tilde{\subobsset}) \\
      & = \prior(\sublatset \times \subobsset),
    \end{align*}
    where the first and last equality hold because $\pushfw{\prior}{{\pi_{\subobssp}}}(\subobsset \setminus \tilde{\subobsset}) = 0$, and second equality holds by Fubini's theorem.

    The sets of the form $\sublatset \times \subobsset \in \mathcal A_\latsp$ are stable under finite intersections and generate $\mathcal A_\latsp = \mathcal A_{\sublatsp} \otimes \mathcal A_{\subobssp}$, so we have shown that our expression satisfies \cref{def:disintegration}\labelcref{item:disintegration-law-of-total-measure}, and hence is indeed the disintegration $\disint{\prior}{\pi_{\subobssp}}[\subobs]$. The theorem then follows by projecting onto $\sublatsp$.
\end{proof}

\subsection{Disintegration Building Blocks}
Because of the almost everywhere nature of \cref{thm:disintegration}, all equalities in this section are meant $\predictive$-almost everywhere.

In this subsection we establish six scaffolding results about disintegrations that are of independent interest. 
We will later demonstrate in \cref{sec:disintegration-manifolds} how they can be combined to give, under certain regularity assumptions, an explicit formula for disintegrations on manifolds in \cref{thm:disintegration-density}.
We anticipate that other results further characterising disintegrations can be obtained from these lemmas.

The results are, we believe, quite intuitive and straightforward to explain narratively, so we summarise them briefly here.
\Cref{lem:disintegration-bijection} provides the most elementary building block, showing that disintegrations under the identity map yield Dirac measures. \Cref{lem:disintegration-product} gives an expression for the disintegration of a product measure with respect to a coordinate map in terms of the product of the disintegration. \Cref{lem:disintegration-pushforward} shows that the disintegration of the pushforward is the pushforward of the disintegration. \Cref{lem:disintegration-dominated} shows that the disintegration of a measure dominated by some base measure is dominated by the disintegration of the base measure, and its density is a renormalization of the prior density. \Cref{lem:disintegration-restriction} shows that the disintegration of a measure restricted to some set is the renormalization of the restriction of the disintegration of the measure. Finally, \cref{prop:disintegration-equiv-obs} shows that disintegrations are invariant to composition of the observation map with a bijection.
\begin{lemma}[Disintegration w.r.t.~the Identity]
    \label{lem:disintegration-bijection}
    If $\obssp = \latsp$ and $\obsop\colon \latsp \to \obssp$ is the identity map, then
    \begin{equation*}
        \priordisint[\obs] = \delta_{\obs}
    \end{equation*}
    the Dirac measure at $\obs$.
\end{lemma}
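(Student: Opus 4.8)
The plan is to produce the obvious candidate disintegration, namely the Markov kernel $\obs \mapsto \delta_\obs$, to verify directly that it satisfies the two defining properties in \cref{def:disintegration}, and then to appeal to the $\predictive$-almost-everywhere uniqueness furnished by \cref{thm:disintegration}. Two preliminary simplifications make the verification essentially mechanical: since $\obsop = \id[\latsp]$, the pushforward is $\predictive = \pushfw{\prior}{\obsop} = \prior$, and the observation fiber over $\obs$ is the singleton $\obsfib = \set{\obs}$, which lies in $\sigalg_\obssp$ by \cref{asm:disintegration-conditions}\ref{item:obssp}.

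First I would check that $\kappa\condps{\openarg \given \obs} \defeq \delta_\obs$ is a transition kernel (in fact a Markov kernel) from $\obssp = \latsp$ to $\latsp$ in the sense of \cref{def:transition-kernel}: each $\delta_\obs$ is a probability measure, and for fixed $\latset \in \sigalg_\latsp$ the map $\obs \mapsto \delta_\obs(\latset) = \mathbbm{1}_\latset(\obs)$ is measurable because $\latset$ is. Then I would verify the two conditions of \cref{def:disintegration}. Condition \ref{item:disintegration-support} holds for \emph{every} $\obs$, since $\delta_\obs(\latsp \setminus \obsfib) = \delta_\obs(\latsp \setminus \set{\obs}) = 0$. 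Condition \ref{item:disintegration-law-of-total-measure} follows from the one-line computation
\[
  \int_\obssp \delta_\obs(\latset)\,\predictive(\dd{\obs}) = \int_\latsp \mathbbm{1}_\latset(\obs)\,\prior(\dd{\obs}) = \prior(\latset), \qquad \latset \in \sigalg_\latsp,
\]
using $\predictive = \prior$.

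Having exhibited $\obs \mapsto \delta_\obs$ as a disintegration of $\prior$ with respect to $\obsop$, \cref{thm:disintegration} forces any version of the disintegration $\priordisint[]$ to coincide with it for $\predictive$-almost every $\obs$, which is exactly the assertion $\priordisint[\obs] = \delta_\obs$. I do not anticipate any real obstacle here: the argument is a consistency check on the definitions, and the only points demanding even momentary attention are that the singleton fibers are measurable and that $\obs \mapsto \delta_\obs(\latset)$ is measurable, both of which are immediate under \cref{asm:disintegration-conditions}.
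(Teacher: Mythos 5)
Your proof is correct and follows essentially the same route as the paper's: the key computation $\int_\obssp \delta_\obs(\latset)\,\predictive(\dd{\obs}) = \prior(\latset)$ is identical. The only (immaterial) difference is the logical direction — the paper argues directly that any disintegration, being a probability measure supported on the singleton fiber $\set{\obs}$, must equal $\delta_\obs$, whereas you exhibit $\delta_\obs$ as a candidate and invoke the almost-everywhere uniqueness of \cref{thm:disintegration}; both are valid.
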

\begin{proof}
    $\priordisint[\obs]$ is supported on $\obsop\inv(\{\obs\}) = \{\obs\}$. Since $\priordisint[\obs]$ is a probability measure, we necessarily have $\priordisint[\obs] = \delta_{y}$. We check: for $\latset\subset \latsp$ measurable,
    \begin{equation*}
        \int_{\obssp} \delta_{y}(\latset) \predictive(\dd{\obs})= \int_{\latsp} \mathbbm{1}_{\latset}{(x)}\prior(\dd{x}) = \int_{\latset}\prior(\dd{x})= \prior(\latset)
    \end{equation*}
    as required.
\end{proof}

In the next lemma $\latsp_1, \latsp_2$ are under the same assumptions as $\latsp$ (\cref{asm:disintegration-conditions}\ref{item:latsp}), and $\mu_1,\mu_2$ are under the same assumptions as $\mu$ (\cref{asm:disintegration-conditions}\ref{item:measure}) but on $\latsp_1, \latsp_2$ respectively.
\begin{lemma}[Disintegration of Product Measure]
  \label{lem:disintegration-product}
  Suppose $\latsp = \latsp_1 \times \latsp_2$, $\prior = \prior_1 \times \prior_2$, and that $\mu_2$ is finite.
  Let $\obsop_1 : \latsp_1 \to \obssp$, and $\obsop = \obsop_1 \circ \pi_1 \colon \latsp \to \obssp, (\latval_1, \latval_2) \mapsto \obsop_1(\latval_1)$.
  Then, for $\obs \in \obssp$,
  \begin{equation*}
    \prior_2(\latsp_2)\priordisint
    = \disint{\prior_1}{\obsop_1}[\obs] \times \prior_2.
  \end{equation*}
\end{lemma}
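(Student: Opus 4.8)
The plan is to write down an explicit transition kernel, check directly that it satisfies \cref{def:disintegration}, and then invoke the $\predictive$-almost-everywhere uniqueness of disintegrations from \cref{thm:disintegration}. One may assume $\prior_2(\latsp_2) \in (0,\infty)$: if $\prior_2$ is the zero measure, then so are $\prior$ and $\predictive$, and both sides of the asserted identity vanish $\predictive$-almost everywhere. Abbreviating $\nu \defeq \pushfw{\prior_1}{{\obsop_1}}$ and using $\obsop = \obsop_1 \circ \pi_1$ together with $\prior = \prior_1 \times \prior_2$, one has $\obsop^{-1}(B) = \obsop_1^{-1}(B) \times \latsp_2$ for every $B \in \sigalg_\obssp$, hence
\[
  \predictive(B) \;=\; \prior\bigl(\obsop_1^{-1}(B) \times \latsp_2\bigr) \;=\; \prior_2(\latsp_2)\,\nu(B).
\]
Since $\prior_2(\latsp_2)$ is finite and nonzero and $\predictive$ is $\sigma$-finite, so is $\nu$; therefore \cref{thm:disintegration} applies to the triple $(\latsp_1, \prior_1, \obsop_1)$ --- the remaining items of \cref{asm:disintegration-conditions} holding by the standing hypotheses of the lemma --- and the disintegration $\disint{\prior_1}{\obsop_1}$ exists.

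I would then verify that the transition kernel
\[
  \kappa(\openarg \mid \obs) \;\defeq\; \frac{1}{\prior_2(\latsp_2)}\,\bigl(\disint{\prior_1}{\obsop_1}[\obs] \times \prior_2\bigr)
\]
is a disintegration of $\prior$ with respect to $\obsop$. It is a Markov kernel: each $\kappa(\openarg \mid \obs)$ is a probability measure, and $\obs \mapsto \kappa(A \mid \obs)$ is measurable --- first on measurable rectangles $A = A_1 \times A_2$, where $\kappa(A_1 \times A_2 \mid \obs) = \prior_2(\latsp_2)^{-1}\,\disint{\prior_1}{\obsop_1}[\obs](A_1)\,\prior_2(A_2)$ is measurable in $\obs$ by \cref{def:transition-kernel}, and then on all of $\sigalg_\latsp$ by a monotone-class argument. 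The support condition \cref{def:disintegration}\labelcref{item:disintegration-support} holds because $\obsfib = \obsop_1^{-1}(\{\obs\}) \times \latsp_2$, so that $\latsp \setminus \obsfib = \bigl(\latsp_1 \setminus \obsop_1^{-1}(\{\obs\})\bigr) \times \latsp_2$, on which the first factor $\disint{\prior_1}{\obsop_1}[\obs]$ already vanishes.

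For the law of total measure \cref{def:disintegration}\labelcref{item:disintegration-law-of-total-measure} it suffices to check $\prior(A) = \int_{\obssp} \kappa(A \mid \obs)\,\predictive(\dd{\obs})$ on the $\pi$-system of measurable rectangles $A = A_1 \times A_2$: this system generates $\sigalg_\latsp$, and both sides are $\sigma$-finite measures exhausted by rectangles $B \times \latsp_2$ of finite measure, so the $\pi$--$\lambda$ theorem lifts the identity to all of $\sigalg_\latsp$. On such a rectangle, substituting $\predictive = \prior_2(\latsp_2)\,\nu$ and unfolding the definitions reduces the right-hand side to $\prior_2(A_2) \int_{\obssp} \disint{\prior_1}{\obsop_1}[\obs](A_1)\,\nu(\dd{\obs}) = \prior_2(A_2)\,\prior_1(A_1) = \prior(A_1 \times A_2)$, using \cref{def:disintegration}\labelcref{item:disintegration-law-of-total-measure} for the disintegration $\disint{\prior_1}{\obsop_1}$.

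Hence $\kappa$ is a disintegration of $\prior$ along $\obsop$, and the uniqueness in \cref{thm:disintegration} forces $\priordisint = \kappa(\openarg \mid \obs)$ for $\predictive$-almost every $\obs$, i.e.\ $\prior_2(\latsp_2)\,\priordisint = \disint{\prior_1}{\obsop_1}[\obs] \times \prior_2$, which is the claim. I expect the only care needed to be measure-theoretic bookkeeping --- propagating $\sigma$-finiteness so that \cref{thm:disintegration} applies to $(\latsp_1, \prior_1, \obsop_1)$ and the monotone-class and $\pi$--$\lambda$ arguments are licensed, and using that the measurable rectangles generate $\sigalg_\latsp = \mathcal B(\latsp_1 \times \latsp_2)$ --- rather than any single computation.
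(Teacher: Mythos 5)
Your proposal is correct and follows essentially the same route as the paper: exhibit the candidate kernel $\prior_2(\latsp_2)^{-1}\,(\disint{\prior_1}{\obsop_1}[\obs] \times \prior_2)$, verify the support condition and the law of total measure on measurable rectangles using $\predictive = \prior_2(\latsp_2)\,\pushfw{\prior_1}{{\obsop_1}}$, extend by a generating-$\pi$-system argument, and conclude by the almost-everywhere uniqueness of disintegrations. You are somewhat more explicit than the paper about the bookkeeping (the degenerate case $\prior_2 = 0$, the $\sigma$-finiteness of $\pushfw{\prior_1}{{\obsop_1}}$ needed for $\disint{\prior_1}{\obsop_1}$ to exist, and the measurability of $\obs \mapsto \kappa(A \mid \obs)$), but these are refinements of the same argument rather than a different one.
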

\begin{proof}
  Property \labelcref{item:disintegration-support} in \cref{def:disintegration} for $\priordisint[]$ follows from the corresponding property of $\disint{\prior_1}{\obsop_1}$:
  \begin{equation*}
      (\disint{\prior_1}{\obsop_1}[\obs] \times \prior_2)(\latsp \setminus \obsfib) = \disint{\prior_1}{\obsop_1}[\obs](\latsp_1 \setminus h_1\inv(\{y\}))\cdot \mu_2(\latsp_2) = 0.
  \end{equation*}
  Now for $\latset_1\subset\latsp_1$, $\latset_2\subset\latsp_2$ measurable
  \begin{align*}
    \int_{\obssp} (\disint{\prior_1}{\obsop_1}[\obs] \times \prior_2)(\latset_1\times\latset_2)\predictive(\dd{\obs}) &= \int_{\latsp_1} \disint{\prior_1}{\obsop_1}[\obs](\latset_1) \cdot \prior_2(\latset_2)\predictive(\dd{\obs}) \\
    &= \mu_2(\latset_2)\int_{\latsp_1}\disint{\prior_1}{\obsop_1}[\obs](\latset_1)\predictive(\dd{\obs}) \\
    &= \mu_2(\latsp_2)\mu_2(\latset_2)\int_{\latsp_1}\disint{\prior_1}{\obsop_1}[\obs](\latset_1)\pushfw{\prior_1}{{\obsop_1}}(\dd{\obs}) \\
    &= \mu_2(\latsp_2)\mu_1(\latset_1)\mu_2(\latset_2) \\
    &= \mu_2(\latsp_2)\mu(\latset_1\times\latset_2).
  \end{align*}
  where we used $\predictive = \prior_2(\latsp_2) \pushfw{\prior_1}{{\obsop_1}}$. The sets of the form $X_1\times X_2\in \mathcal A_{\latsp}$ are stable under finite intersections and generate $\mathcal A_{\latsp} = \mathcal A_{\latsp_1}\otimes \mathcal A_{\latsp_2}$, so this is enough to conclude the proof.
\end{proof}
\begin{remark}
    It is important that $\prior_2$ is a finite measure in order for $\pi_{1\star}\prior$ and hence $\predictive$ to be $\sigma$-finite.
\end{remark}

In the following lemma $\tilde\latsp$ is under the same assumptions as $\latsp$ (\cref{asm:disintegration-conditions}\ref{item:latsp}).
\begin{lemma}[Disintegration of Pushforward Measure]
  \label{lem:disintegration-pushforward}
  Let $f \colon \latsp \to \tilde{\latsp}$, $g \colon \tilde{\latsp} \to \obssp$ be measurable maps such that $\obsop = g \circ f$.
  Then, for $\obs \in \obssp$,
  \begin{equation*}
    \disint{(\pushfw{\prior}{f})}{g}[y] = \pushfw{(\priordisint[y])}{f}.
  \end{equation*}
\end{lemma}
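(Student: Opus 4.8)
The plan is to verify directly that the Markov kernel $\obs \mapsto \pushfw{(\priordisint[\obs])}{f}$ satisfies the two defining properties of a disintegration of $\pushfw{\prior}{f}$ with respect to $g$, and then invoke the almost-everywhere uniqueness from \cref{thm:disintegration} to conclude. Before that, one should observe that the hypotheses of \cref{asm:disintegration-conditions} transfer to the new setting: $\tilde\latsp$ is metrizable with its Borel $\sigma$-algebra by assumption, $\obssp$ is unchanged, $\pushfw{\prior}{f}$ is $\sigma$-finite (as the pushforward of a $\sigma$-finite measure), it is Radon since $\tilde\latsp$ is metrizable and $f$ is measurable (or one may simply note that the relevant disintegration theorem applies), and crucially $\pushfw{(\pushfw{\prior}{f})}{g} = \pushfw{\prior}{(g \circ f)} = \pushfw{\prior}{\obsop} = \predictive$, which is $\sigma$-finite by assumption. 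So both disintegrations in the statement are taken with respect to the \emph{same} measure $\predictive$ on $\obssp$, which is what makes the uniqueness argument applicable.

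For property~\labelcref{item:disintegration-support}, I would compute, for fixed $\obs$,
\begin{equation*}
  \pushfw{(\priordisint[\obs])}{f}\bigl(\tilde\latsp \setminus g\inv(\set{\obs})\bigr)
  = \priordisint[\obs]\bigl(f\inv(\tilde\latsp \setminus g\inv(\set{\obs}))\bigr)
  = \priordisint[\obs]\bigl(\latsp \setminus (g \circ f)\inv(\set{\obs})\bigr)
  = \priordisint[\obs]\bigl(\latsp \setminus \obsfib\bigr)
  = 0,
\end{equation*}
using that $f\inv(g\inv(\set{\obs})) = (g\circ f)\inv(\set{\obs}) = \obsfib$ and that $\priordisint[]$ is itself a disintegration along $\obsop$. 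For property~\labelcref{item:disintegration-law-of-total-measure}, take any measurable $\tilde\latset \subseteq \tilde\latsp$ and chase the definitions: $\pushfw{(\priordisint[\obs])}{f}(\tilde\latset) = \priordisint[\obs](f\inv(\tilde\latset))$, so
\begin{equation*}
  \int_{\obssp} \pushfw{(\priordisint[\obs])}{f}(\tilde\latset)\, \predictive(\dd\obs)
  = \int_{\obssp} \priordisint[\obs](f\inv(\tilde\latset))\, \predictive(\dd\obs)
  = \prior(f\inv(\tilde\latset))
  = \pushfw{\prior}{f}(\tilde\latset),
\end{equation*}
where the middle equality is exactly \cref{def:disintegration}\labelcref{item:disintegration-law-of-total-measure} for $\priordisint[]$ applied to the measurable set $f\inv(\tilde\latset) \in \sigalg_\latsp$. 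One should also check measurability of $\obs \mapsto \pushfw{(\priordisint[\obs])}{f}(\tilde\latset)$, but this is immediate since it equals $\obs \mapsto \priordisint[\obs](f\inv(\tilde\latset))$, which is measurable because $\priordisint[]$ is a transition kernel and $f\inv(\tilde\latset)$ is a fixed measurable set; and each $\pushfw{(\priordisint[\obs])}{f}$ is a probability measure as the pushforward of one.

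Having shown that $\obs \mapsto \pushfw{(\priordisint[\obs])}{f}$ is a disintegration of $\pushfw{\prior}{f}$ with respect to $g$, the $\predictive$-almost-everywhere uniqueness asserted in \cref{thm:disintegration} forces it to agree, $\predictive$-almost everywhere, with $\disint{(\pushfw{\prior}{f})}{g}[\,\cdot\,]$, which is precisely the claimed identity (recall the convention of this subsection that all equalities between disintegrations are understood $\predictive$-almost everywhere). There is no real obstacle here; the only point requiring a moment's care is the bookkeeping that both sides are disintegrations \emph{with respect to the same base measure $\predictive$ on $\obssp$}, which is exactly why the composition identity $\pushfw{(\pushfw{\prior}{f})}{g} = \predictive$ is needed, together with confirming that the standing assumptions \cref{asm:disintegration-conditions} hold for the pushforward problem so that \cref{thm:disintegration} may be invoked.
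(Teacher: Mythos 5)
Your proposal is correct and follows essentially the same route as the paper's proof: verify the support property via $f\inv(g\inv(\{\obs\})) = \obsfib$ and the law of total measure by pulling test sets back through $f$, using that $\pushfw{(\pushfw{\prior}{f})}{g} = \predictive$, then invoke almost-everywhere uniqueness. Your additional checks (measurability of the kernel and that \cref{asm:disintegration-conditions} transfers to the pushforward problem) are careful bookkeeping the paper leaves implicit.
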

\begin{proof}
  First checking \labelcref{item:disintegration-support} in \cref{def:disintegration}, we have
  \begin{align*}
    \pushfw{(\priordisint)}{f}(\tilde \latsp \setminus \preim{g}(\set{\obs}))
    & = \priordisint(\preim{f}\pqty{\tilde \latsp \setminus \preim{g}(\set{\obs})}) \\
    & = \priordisint(\latsp \setminus \pqty{\preim{f} \circ \preim{g}} (\set{\obs})) \\
    & = \priordisint(\latsp \setminus \obsfib) \\
    & = 0.
  \end{align*}
  Moreover, for $\latset \subset \latsp$ measurable,
  \begin{equation*}
      \int_{\obssp} \pushfw{(\priordisint)}{f}(\latset) \pushfw{(\pushfw{\prior}{f})}{g}(\dd{\obs})
      = \int_{\obssp} \priordisint(\preim{f}(\latset)) \pushfw{\prior}{\obsop}(\dd{\obs})
      = \prior(\preim{f}(\latset))
      = \pushfw{\prior}{f}(\latset).
  \end{equation*}

\end{proof}
In the next lemma $\nu$ is under the same assumptions as $\prior$ (\cref{asm:disintegration-conditions}\ref{item:measure}).
\begin{lemma}[Disintegration of Dominated Measure]
  \label{lem:disintegration-dominated}
  Suppose $\prior \ll \nu$.
  Then for $\obs \in \obssp$, $\priordisint[\obs]$ is absolutely continuous with respect to $\disint{\nu}{\obsop}[\obs]$ and
  \begin{equation*}
    \dv{\priordisint[\obs]}{\disint{\nu}{\obsop}[\obs]}
    = \frac{\dv*{\prior}{\nu}}{\disint{\nu}{\obsop}[\obs](\dv*{\prior}{\nu})}
  \end{equation*}
  where $\disint{\nu}{\obsop}[\obs](\dv*{\prior}{\nu})>0$ $\predictive$-almost everywhere.
\end{lemma}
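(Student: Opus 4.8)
The plan is to write down an explicit candidate transition kernel $\kappa$, check that it is a disintegration of $\prior$ with respect to $\obsop$ in the sense of \cref{def:disintegration}, and then invoke the $\predictive$-almost-everywhere uniqueness of \cref{thm:disintegration} to conclude that $\kappa$ agrees with $\priordisint[]$. Abbreviate $\rho \defeq \dv*{\prior}{\nu}$ for a fixed version of the Radon--Nikodym derivative and set $c(\obs) \defeq \disint{\nu}{\obsop}[\obs](\rho)$. The key preliminary identity is that $c$ is exactly the Radon--Nikodym derivative of $\predictive$ with respect to $\pushfw{\nu}{\obsop}$, i.e.\ $\predictive(\obsset) = \int_\obsset c \,\pushfw{\nu}{\obsop}(\dd{\obs})$ for every measurable $\obsset \subseteq \obssp$. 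This follows from \cref{def:disintegration}\labelcref{item:disintegration-law-of-total-measure} applied to the disintegration $\disint{\nu}{\obsop}$ (in the form for non-negative measurable integrands, see the remark after \cref{def:disintegration}) with the function $\rho\,\mathbbm{1}_{\obsop\inv(\obsset)}$, using that $\disint{\nu}{\obsop}[\obs]$ is concentrated on $\obsfib$ — where $\mathbbm{1}_{\obsop\inv(\obsset)}$ coincides with the constant $\mathbbm{1}_\obsset(\obs)$ — together with $\predictive(\obsset) = \prior(\obsop\inv(\obsset)) = \nu(\rho\,\mathbbm{1}_{\obsop\inv(\obsset)})$. Since $\prior \ll \nu$ also forces $\predictive \ll \pushfw{\nu}{\obsop}$, this genuinely identifies $c$ with $\dv*{\predictive}{(\pushfw{\nu}{\obsop})}$.

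Two facts then come for free. Because $\predictive$ and $\pushfw{\nu}{\obsop}$ are $\sigma$-finite (\cref{asm:disintegration-conditions}\ref{item:measure}), the derivative $c$ is finite $\pushfw{\nu}{\obsop}$-a.e., hence $\predictive$-a.e.; and $\predictive(\set{c = 0}) = \int_{\set{c=0}} c \,\pushfw{\nu}{\obsop}(\dd{\obs}) = 0$, which is precisely the asserted positivity $\disint{\nu}{\obsop}[\obs](\dv*{\prior}{\nu}) > 0$ $\predictive$-a.e. On the $\predictive$-conull set $G \defeq \set{0 < c < \infty}$ I would then define $\kappa(\openarg \given \obs) \defeq \frac{\rho}{c(\obs)}\,\disint{\nu}{\obsop}[\obs]$, and set $\kappa(\openarg \given \obs)$ to the zero measure for $\obs \notin G$; measurability of $\obs \mapsto \kappa(\latset \given \obs)$ is inherited from the extension of integrals and Radon--Nikodym derivatives to transition kernels. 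Property \cref{def:disintegration}\labelcref{item:disintegration-support} for $\kappa$ is immediate, since $\disint{\nu}{\obsop}[\obs]$ already gives no mass to $\latsp \setminus \obsfib$. For \cref{def:disintegration}\labelcref{item:disintegration-law-of-total-measure} I would integrate $\kappa(\latset \given \openarg)$ against $\predictive = c\cdot\pushfw{\nu}{\obsop}$: on $G$ the factor $c$ cancels the $c(\obs)^{-1}$, while $\set{c = 0}$ contributes nothing because $\disint{\nu}{\obsop}[\obs](\rho) = 0$ there forces $\rho = 0$ $\disint{\nu}{\obsop}[\obs]$-a.e., so the whole integral collapses to $\int_\obssp \disint{\nu}{\obsop}[\obs](\rho\,\mathbbm{1}_{\latset}) \,\pushfw{\nu}{\obsop}(\dd{\obs}) = \nu(\rho\,\mathbbm{1}_{\latset}) = \prior(\latset)$, once more by \cref{def:disintegration}\labelcref{item:disintegration-law-of-total-measure} for the disintegration $\disint{\nu}{\obsop}$.

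Hence $\kappa$ is a disintegration of $\prior$ with respect to $\obsop$, so \cref{thm:disintegration} gives $\kappa(\openarg \given \obs) = \priordisint[\obs]$ for $\predictive$-a.e.\ $\obs$; on $G$ this reads $\priordisint[\obs] = \frac{\rho}{c(\obs)}\,\disint{\nu}{\obsop}[\obs]$, which is the claimed absolute continuity together with the stated density. Most of the work is routine measure-algebra bookkeeping. The one point requiring genuine care — and what I expect to be the main obstacle — is that, unlike in the finite or probability-measure setting, $c(\obs)$ could a priori vanish or be infinite on a set of positive measure; recognising $c$ as $\dv*{\predictive}{(\pushfw{\nu}{\obsop})}$ simultaneously rules out both pathologies and is exactly what legitimises the cancellation in the verification of \cref{def:disintegration}\labelcref{item:disintegration-law-of-total-measure}. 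A smaller thing to keep straight is that $\disint{\nu}{\obsop}$ disintegrates $\nu$ along its own pushforward $\pushfw{\nu}{\obsop}$, not along $\predictive$, so almost-everywhere statements have to be carried back and forth using $\predictive \ll \pushfw{\nu}{\obsop}$.
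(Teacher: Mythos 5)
Your proposal is correct and follows essentially the same route as the paper: the key identity $\disint{\nu}{\obsop}[\obs]\big(\dv*{\prior}{\nu}\big) = \dv*{(\pushfw{\prior}{\obsop})}{(\pushfw{\nu}{\obsop})}(\obs)$, the resulting $\predictive$-almost-everywhere positivity, and the closing computation $\int_{\obssp}\disint{\nu}{\obsop}[\obs]\big(\mathbbm{1}_{\latset}\cdot\dv*{\prior}{\nu}\big)\,\pushfw{\nu}{\obsop}(\dd{\obs})=\prior(\latset)$ are exactly the paper's steps. The only difference is how uniqueness is invoked at the end: you normalise by $c(\obs)$ first and verify \cref{def:disintegration} along $\predictive$ so that \cref{thm:disintegration} applies directly, whereas the paper identifies the unnormalised kernel $\dv*{\prior}{\nu}\,\disint{\nu}{\obsop}[\obs]$ as the unique disintegration of $\prior$ with respect to the dominating measure $\pushfw{\nu}{\obsop}$ in the more general sense of \citet{chang_conditioning_1997} — your variant is, if anything, slightly more self-contained.
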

\begin{proof}
  Property \labelcref{item:disintegration-support} in \cref{def:disintegration} for $\priordisint[]$ follows from the corresponding property of $\disint{\nu}{\obsop}$.
  It remains to show property \labelcref{item:disintegration-law-of-total-measure}.
  Note that for $\obsset \subset \obssp$ measurable
  \begin{align*}
    \int_\obsset \disint{\nu}{\obsop}[\obs](\dv{\prior}{\nu}) \pushfw{\nu}{\obsop}(\dd{\obs})
    & = \int_{\obssp} \mathbbm{1}_{\obsset}(\obs) \cdot \disint{\nu}{\obsop}[\obs](\dv{\prior}{\nu}) \pushfw{\nu}{\obsop}(\dd{\obs}) \\
    & = \int_{\obssp} \disint{\nu}{\obsop}[\obs](\mathbbm{1}_{\obsop\inv(\obsset)} \cdot \dv{\prior}{\nu}) \pushfw{\nu}{\obsop}(\dd{\obs})\\
    & = \nu \pqty{\mathbbm{1}_{\obsop\inv(\obsset)} \cdot \dv{\prior}{\nu}} \\
    & = \prior(\preim{\obsop}(\obsset)) \\
    & = \pushfw{\prior}{\obsop}(\obsset),
  \end{align*}
  so
  \begin{equation*}
    \dv{(\pushfw{\prior}{\obsop})}{(\pushfw{\nu}{\obsop})}{(\obs)}
    = \disint{\nu}{\obsop}[\obs](\dv{\prior}{\nu}).
  \end{equation*}
  Now letting $Y:=\{y\in\obssp: \disint{\nu}{\obsop}[\obs](\dv*{\prior}{\nu}) = 0\}$, we have
  \begin{equation*}
      \predictive(Y) = \int_Y \disint{\nu}{\obsop}[\obs](\dv{\prior}{\nu}) \pushfw{\nu}{\obsop}(\dd{\obs}) = 0
  \end{equation*}
  so $\disint{\nu}{\obsop}[\obs](\dv*{\prior}{\nu})>0$ $\predictive$-almost everywhere.

  Thus, for $\latset \subset \latsp$ measurable,
  \begin{align*}
    \int_{\obssp} \disint{\nu}{\obsop}[\obs](\dv{\prior}{\nu}) \priordisint(\latset) \pushfw{\nu}{\obsop}(\dd{\obs})
    & = \int_{\obssp} \priordisint(\latset) \pushfw{\prior}{\obsop}(\dd{\obs}) \\
    & = \prior(\latset) \\
    & = \nu \pqty{\mathbbm{1}_\latset \cdot \dv{\prior}{\nu}} \\
    & = \int_{\obssp} \disint{\nu}{\obsop}[\obs](\mathbbm{1}_\latset \cdot \dv{\prior}{\nu}) \pushfw{\nu}{\obsop}(\dd{\obs}) \\
    & = \int_{\obssp} \int_\latset \dv{\prior}{\nu}{(\latval)} \disint{\nu}{\obsop}[\obs](\dd{\latval}) \pushfw{\nu}{\obsop}(\dd{\obs})
  \end{align*}
  and hence
  \begin{equation*}
      \disint{\nu}{\obsop}[\obs](\dv{\prior}{\nu})\priordisint[\obs] = \dv{\prior}{\nu}\disint{\nu}{\obsop}[\obs]
  \end{equation*}
  is the unique disintegration of $\prior$ with respect to $\obsop_\star\nu$ (see \citet[Theorem 1]{chang_conditioning_1997}).
\end{proof}

\begin{lemma}[Disintegration of Restricted Measure]
  \label{lem:disintegration-restriction}
  Let $\latset \subset \latsp$ be measurable.
  Then, for $\obs \in \obssp$,
  \begin{equation*}
    \left. \priordisint \right\rvert_\latset
    = \priordisint(\latset) \disint{(\left. \prior \right\rvert_\latset)}{\obsop}[\obs].
  \end{equation*}
\end{lemma}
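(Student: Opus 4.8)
The plan is to read this identity off from \cref{lem:disintegration-dominated}, applied to the pair $\left. \prior \right\rvert_\latset \ll \prior$, whose Radon--Nikodym derivative is $\dv*{(\left. \prior \right\rvert_\latset)}{\prior} = \mathbbm{1}_\latset$ ($\prior$-a.e.). Before invoking that lemma I would check that $\left. \prior \right\rvert_\latset$ is an admissible input, i.e.\ satisfies \cref{asm:disintegration-conditions}\ref{item:measure}: it is $\sigma$-finite and locally finite since $\left. \prior \right\rvert_\latset \le \prior$; it is Radon, being the restriction of a Radon measure to a Borel set (inner regularity of $\left. \prior \right\rvert_\latset$ at a Borel set $B$ follows from inner regularity of $\prior$ at $B \cap \latset$); and its pushforward $\pushfw{(\left. \prior \right\rvert_\latset)}{\obsop} \le \predictive$ is $\sigma$-finite. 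Since the hypotheses on $\latsp$ and $\obssp$ are untouched, $\disint{(\left. \prior \right\rvert_\latset)}{\obsop}$ exists and \cref{lem:disintegration-dominated} applies with its ``$\nu$'' taken to be $\prior$ and its ``$\prior$'' taken to be $\left. \prior \right\rvert_\latset$.

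Making these substitutions, and using $\priordisint(\mathbbm{1}_\latset) = \priordisint(\latset)$, \cref{lem:disintegration-dominated} gives
\begin{equation*}
  \dv{\disint{(\left. \prior \right\rvert_\latset)}{\obsop}[\obs]}{\priordisint[\obs]}
  = \frac{\mathbbm{1}_\latset}{\priordisint(\latset)},
\end{equation*}
valid wherever $\priordisint(\latset) > 0$, which is $\pushfw{(\left. \prior \right\rvert_\latset)}{\obsop}$-almost every $\obs$. Multiplying through by $\priordisint(\latset)$ and observing that $\mathbbm{1}_\latset \priordisint[\obs]$, the measure with density $\mathbbm{1}_\latset$ with respect to $\priordisint[\obs]$, is exactly $\left. \priordisint \right\rvert_\latset$, yields the claimed identity on $\set{\obs \where \priordisint(\latset) > 0}$.

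To match the standing convention that equalities here hold $\predictive$-almost everywhere, I would finally compare the two measures on all of $\obssp$. Applying \cref{def:disintegration}\labelcref{item:disintegration-law-of-total-measure} to the sets $\latset \cap \preim{\obsop}(\obsset)$ and using the support property \cref{def:disintegration}\labelcref{item:disintegration-support} of $\priordisint$, one obtains $\pushfw{(\left. \prior \right\rvert_\latset)}{\obsop} = \priordisint(\latset) \cdot \predictive$; hence on $\set{\obs \where \priordisint(\latset) > 0}$ the measures $\predictive$ and $\pushfw{(\left. \prior \right\rvert_\latset)}{\obsop}$ have the same null sets, so the identity already established holds $\predictive$-a.e.\ there. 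On the complement $\set{\obs \where \priordisint(\latset) = 0}$ both sides collapse to the zero measure: the left-hand one because $\left. \priordisint \right\rvert_\latset$ has total mass $\priordisint(\latset) = 0$, and the right-hand one because it equals $0$ times a probability measure, so the non-uniqueness of the version of $\disint{(\left. \prior \right\rvert_\latset)}{\obsop}$ on this set is immaterial. Combining the two regions gives the lemma $\predictive$-a.e. The argument is in essence a restatement of \cref{lem:disintegration-dominated}, so I do not anticipate a real obstacle; the only points requiring attention are confirming that $\left. \prior \right\rvert_\latset$ remains a $\sigma$-finite Radon measure with $\sigma$-finite pushforward, and carefully tracking which ``almost everywhere'' qualifier is in force --- in particular on the possibly $\predictive$-nonnull set where $\priordisint(\latset) = 0$, on which the right-hand side is only defined up to a version but both sides nonetheless vanish.
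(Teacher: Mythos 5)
Your argument is correct and is essentially the paper's own proof: both reduce the claim to \cref{lem:disintegration-dominated} applied to $\left. \prior \right\rvert_\latset \ll \prior$ with Radon--Nikodym derivative $\mathbbm{1}_\latset$, then multiply through by $\priordisint(\latset)$. Your additional checks --- that $\left. \prior \right\rvert_\latset$ still satisfies \cref{asm:disintegration-conditions}\ref{item:measure} and that both sides vanish on $\set{\obs \where \priordisint(\latset) = 0}$ --- are points the paper leaves implicit, and they are handled correctly.
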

\begin{proof}
  Let $\nu$ be the measure on $\latsp$ defined by $\nu := \prior(\openarg \cap \latset)$.
  Then $\nu \ll \prior$ with $\dv{\nu}{\prior} = \mathbbm{1}_{\latset}$.
  Thus, applying \cref{lem:disintegration-dominated}, we obtain for any measurable $\latset' \subset \latsp$
  \begin{equation*}
    \priordisint(\latset) \disint{\nu}{\obsop}[\obs](\latset')
    = \priordisint(\mathbbm{1}_{\latset}) \disint{\nu}{\obsop}[\obs](\latset')
    = \int_{\latset'} \mathbbm{1}_{\latset}(\latval) \priordisint(\dd{\latval})
    = \priordisint(\latset' \cap \latset).
  \end{equation*}
  The result then follows by restricting the measures to $\latset$ and noting that since $\nu|_X = \prior|_X$ we have $\left. \disint{\nu}{\obsop}[\obs] \right\rvert_\latset = \disint{(\left. \nu \right\rvert_\latset)}{\obsop}[\obs] = \disint{(\left. \prior \right\rvert_\latset)}{\obsop}[\obs]$.
\end{proof}

We have now established all the lemmas required for our main result (\cref{thm:disintegration-density}) in the following subsection. We will nevertheless prove an additional result that will be of interest in the upcoming discussion. Here $\tilde \obssp$ is under the same assumptions as $\obssp$ (\cref{asm:disintegration-conditions}\ref{item:obssp}).

\begin{proposition}[Disintegration w.r.t.~Equivalent Observations]\label{prop:disintegration-equiv-obs}
    Let $f\colon \obssp \to \tilde\obssp$ be an isomorphism of measure spaces. Then
    \begin{equation*}
        \priordisint = \disint{\prior}{f\circ\obsop}[f(\obs)].
    \end{equation*}
\end{proposition}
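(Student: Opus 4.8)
The plan is to reindex the family $\{\priordisint\}_{\obs \in \obssp}$ by $f$ and verify directly that the result is a disintegration of $\prior$ with respect to $f \circ \obsop$; almost-sure uniqueness (\cref{thm:disintegration}) then identifies it with $\disint{\prior}{f\circ\obsop}[\,\cdot\,]$. Concretely, define a Markov kernel $\kappa$ from $\tilde\obssp$ to $\latsp$ by $\kappa(\openarg \mid \tilde\obs) \defeq \priordisint[f\inv(\tilde\obs)](\openarg)$. This is well defined: the map $\tilde\obs \mapsto \kappa(\latset \mid \tilde\obs)$ is measurable because $f\inv$ is measurable (as $f$ is an isomorphism of measure spaces) and $\obs \mapsto \priordisint(\latset)$ is measurable by \cref{def:transition-kernel}, and each $\kappa(\openarg \mid \tilde\obs)$ is a probability measure since each $\priordisint$ is.

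Next I would check the two conditions of \cref{def:disintegration} for $\kappa$. For the support condition \labelcref{item:disintegration-support}, bijectivity of $f$ gives $(f\circ\obsop)\inv(\set{\tilde\obs}) = \obsop\inv(\set{f\inv(\tilde\obs)})$, so $\kappa(\latsp \setminus (f\circ\obsop)\inv(\set{\tilde\obs}) \mid \tilde\obs) = \priordisint[f\inv(\tilde\obs)](\latsp \setminus \obsop\inv(\set{f\inv(\tilde\obs)})) = 0$ by condition \labelcref{item:disintegration-support} for $\priordisint[]$. For the law-of-total-measure condition \labelcref{item:disintegration-law-of-total-measure}, I would use functoriality of pushforwards, $\pushfw{\prior}{f\circ\obsop} = \pushfw{\predictive}{f}$, together with the change-of-variables formula for pushforward integrals and condition \labelcref{item:disintegration-law-of-total-measure} for $\priordisint[]$:
\[
  \int_{\tilde\obssp} \kappa(\latset \mid \tilde\obs)\, \pushfw{\predictive}{f}(\dd{\tilde\obs})
  = \int_{\obssp} \priordisint[f\inv(f(\obs))](\latset)\, \predictive(\dd{\obs})
  = \int_{\obssp} \priordisint(\latset)\, \predictive(\dd{\obs})
  = \prior(\latset)
\]
for all measurable $\latset \subset \latsp$.

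Finally I would invoke uniqueness. Before applying \cref{thm:disintegration} one must confirm that \cref{asm:disintegration-conditions} holds with $\tilde\obssp$ in the role of $\obssp$: the standing hypotheses on $\tilde\obssp$ are assumed, and $\pushfw{\prior}{f\circ\obsop} = \pushfw{\predictive}{f}$ is $\sigma$-finite because $\predictive$ is and $f$ is an isomorphism of measure spaces. Then \cref{thm:disintegration} yields $\disint{\prior}{f\circ\obsop}[\tilde\obs] = \kappa(\openarg \mid \tilde\obs) = \priordisint[f\inv(\tilde\obs)]$ for $\pushfw{\predictive}{f}$-almost every $\tilde\obs$; substituting $\tilde\obs = f(\obs)$ and using that $f$ carries $\predictive$-null sets to $\pushfw{\predictive}{f}$-null sets and back gives $\disint{\prior}{f\circ\obsop}[f(\obs)] = \priordisint$ for $\predictive$-almost every $\obs$, which is the claimed identity. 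There is no deep obstacle here; the only point requiring care is this last bookkeeping step around uniqueness — transferring the $\sigma$-finiteness and countable-generation hypotheses to $\tilde\obssp$, and converting the ``$\pushfw{\predictive}{f}$-a.e.'' conclusion into the ``$\predictive$-a.e.'' statement — which is precisely where one needs $f$ to be an isomorphism of measure spaces rather than merely measurable.
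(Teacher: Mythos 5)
Your proposal is correct and follows essentially the same strategy as the paper: verify the support condition via the fiber identity $(f\circ\obsop)\inv(\set{f(\obs)}) = \obsfib$, verify the law of total measure via the change-of-variables formula for the pushforward, and conclude by almost-sure uniqueness. The only difference is directional — the paper checks that $\obs \mapsto \disint{\prior}{f\circ\obsop}[f(\obs)]$ is a disintegration with respect to $\obsop$, while you check that $\tilde\obs \mapsto \priordisint[f\inv(\tilde\obs)]$ is a disintegration with respect to $f\circ\obsop$ and then transport the a.e.\ statement back through $f$ — which is the same argument with slightly more bookkeeping, handled correctly.
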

\begin{proof}
    Property \labelcref{item:disintegration-support} in \cref{def:disintegration} follows from the fact that 
    \[
    \obsfib = (f\circ\obsop)\inv(\{f(y)\}).
    \] 
    Now for $\latset\subset\latsp$ measurable, by (measure-theoretic) change of variables,
    \begin{equation*}
        \int_{\obssp} \disint{\prior}{f\circ\obsop}[f(\obs)](\latset) \predictive(\dd{\obs}) = \int_{\tilde\obssp} \disint{\prior}{f\circ\obsop}[\tilde\obs](\latset) (f\circ\obsop)_\star(\dd{\tilde \obs}) = \prior(X).
    \end{equation*}
\end{proof}

\subsection{Construction of Disintegrations on Manifolds}
\label{sec:disintegration-manifolds}
We begin by stating some additional assumptions on manifold structure of $\latsp$ that are required for our main result in \cref{sec:disintegration}, \cref{thm:disintegration-density}.
For readers unfamiliar with the differential geometry used in this section, \cref{app:differential_geometry} provides a concise summary of the relevant definitions and results.

\begin{assumption}
  \label{asm:prior-observation}
  \leavevmode
  \begin{enumerate}[label=(\roman*)]
    \item \label{item:X-prior}
      Let $\latsp$ be a $d$-dimensional $C^k$ manifold.
      Assume that $\prior$ is absolutely continuous with respect to a volume measure on $\latsp$.
    \item \label{item:Y-observation}
      Let $\obssp = \R^n$ for some $n \in \N$, and $\obsop \colon \latsp \to \obssp$
      such that the points in $\latsp$ are $\prior$-almost everywhere $C^k$ regular w.r.t.~$\obsop$.
  \end{enumerate}
\end{assumption}

A $C^k$ regular point w.r.t.~$h$ is a point $x\in\latsp$ around which $h$ is $C^k$ and the derivative of $h$ is surjective (\cref{def:regular-point}). By the preimage theorem (\cref{thm:implicit-function}), \Cref{asm:prior-observation} ensures in particular that $\obsfib$ is a $(d-n)$-dimensional $C^k$ submanifold of $\latsp$ for $\predictive$-almost every $y$. We will use this to construct the disintegration $\predictive$-almost everywhere, on these manifolds.

There is no canonical way to restrict an ambient measure or volume form from $\latsp$ to the generally lower dimensional fiber $\obsfib$ directly\footnote{Even when the pushforward of the volume measure $h_\star\volmeas{\latsp}$ is $\sigma$-finite and we can disintegrate $\volmeas{\latsp}$, such disintegration cannot be interpreted as a canonical \emph{restriction} to the fiber, since we will see the disintegration depends on $\obsop$ through more than just the fiber $\obsfib$ (\cref{rmk:disintegration-fibers}).}.
However, if we we equip $\latsp$ with a Riemannian metric then we obtain a Riemannian volume measure $\volmeas{\latsp}$, and there \emph{is} a canonical restriction of this metric.
The restricted Riemannian metric on $\obsfib$ then induces a Riemannian volume measure $\volmeas{\obsfib}$ on $\obsfib$. 
This construction allows us more generally to restrict the measure $\mu$, as we will see in the next definition.

\begin{remark}\label{rmk:volume-measures}
    In \cref{asm:prior-observation}\ref{item:X-prior}, by a volume measure we mean a measure induced by a $d$-form (\cref{def:volume-measure}). All such volume measures are absolutely continuous with respect to any Riemannian volume measure $\volmeas{\latsp}$ (\cref{def:riemann-volume}), thus we could equivalently assume in \cref{asm:prior-observation}\ref{item:X-prior} that $\mu$ is absolutely continuous with respect to $\volmeas{\latsp}$.
\end{remark}

\begin{definition}[Riemannian Restricted Measure]
  \label{def:restricted-measure}
    Suppose $\latsp$ is a $C^k$ Riemannian manifold, $\tilde {\mathbb X}$ a $C^k$ ($k\geq 1$) Riemannian submanifold of $\latsp$ and $\mu$ is absolutely continuous with respect to $\volmeas{\latsp}$. Then the \textit{Riemannian restricted measure} $\mu_{\tilde{\mathbb X}}$ is the measure given by $\mu_{\tilde {\mathbb X}} := \dv{\mu}{\volmeas{\latsp}}\volmeas{\tilde {\mathbb X}}$.
\end{definition}
\begin{remark} 
    Note the different notation used for a measure that has been restricted in a \emph{measure-theoretic} sense, as in \cref{lem:disintegration-restriction}, i.e.\ $\left. \prior \right\rvert_{\tilde \latsp}$, compared to the \emph{Riemannian} restricted measure from \cref{def:restricted-measure}, i.e.\ $\mu_{\tilde{\mathbb X}}$.
    The two are generally distinct. 
    In particular, whenever $\dim \tilde \latsp< \dim \latsp$, if $\prior$ is absolutely continuous with respect to a volume measure on $\latsp$ then $\left. \prior \right\rvert_{\tilde \latsp}$ is a zero measure, so not a useful notion of restriction for the purposes of subsequent results.
\end{remark}
For $\obs\in\obssp$, $\priordisint$ is a probability measure supported on $\obsfib$. This could lead one to ask whether disintegrations $\priordisint[\obs]$ correspond to renormalizations of Riemannian restricted measures $\mu_{\obsfib}$, under \cref{asm:prior-observation}. Perhaps surprisingly they do not. Riemannian restricted measures depend on the choice of Riemannian metric while, by definition, disintegrations do not.

Next we state and prove \Cref{thm:disintegration-density}, our main result for this section.
The theorem gives an explicit expression for disintegrations under \cref{asm:prior-observation}, showing that they differ from renormalized Riemannian restricted measures by a determinant-like term.
This term quantifies how dense fibers lie locally along directions orthogonal to the fiber.
Intuitively speaking, regions in which the fibers lie less dense need to be equipped with higher disintegration density for the law of total probability (\cref{def:disintegration}\ref{item:disintegration-law-of-total-measure}) to hold.
For more concrete, visual intuition, see \cref{fig:mode_example} and \cref{ex:gaussian-ellipse}.

\begin{theorem}
  \label{thm:disintegration-density}
  Under \cref{asm:prior-observation} with $k=1$, equip $\latsp$ with a Riemannian metric and denote by $\volmeas{\latsp}$ the corresponding volume measure on $\latsp$.
  Then $\priordisint$ is absolutely continuous with respect to the Riemannian volume measure $\volmeas{\obsfib}$ on the Riemannian submanifold $\obsfib$ and
  \begin{equation*}
      \dv{\priordisint}{\volmeas{\obsfib}}{(\latval)} \propto
    \dv{\prior}{\volmeas{\latsp}}{(\latval)} \cdot \abs{\volmeas{\kernel{\jacobian{\obsop}(x)}^\perp}(x)[\nabla\obsop(x)]}\inv.
  \end{equation*}
\end{theorem}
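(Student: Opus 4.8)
The claimed identity is pointwise on $\obsfib$ and, by the running $\predictive$-almost-everywhere convention, only needs to hold for almost every $\obs$, so the plan is to localise to a chart where $\obsop$ is in submersion normal form, compute there, and reassemble. Write $\rho := \dv{\prior}{\volmeas{\latsp}}$. The set of non-$C^1$-regular points is $\prior$-null by \cref{asm:prior-observation}, and so is $\set{\rho = 0}$; integrating over $\obs$ and using \cref{def:disintegration}\labelcref{item:disintegration-law-of-total-measure} shows both are $\priordisint[\obs]$-null for $\predictive$-almost every $\obs$, so I may assume every point of $\latsp$ is $C^1$ regular and $\rho > 0$, whence $\obsfib$ is a $(d-n)$-dimensional $C^1$ submanifold (\cref{thm:implicit-function}). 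By the inverse function theorem, every point of $\latsp$ has a neighbourhood carrying a $C^1$ diffeomorphism $\psi \colon V \times W \to U$ with $V \subseteq \R^n$ open, $W \subseteq \R^{d-n}$ a bounded open box, and $\obsop \circ \psi = \pi_V$ the coordinate projection; since $\latsp$ is second countable, I may fix a countable cover by such charts $U_i$ and disjointify it into a Borel partition $\set{A_i}$ with $A_i \subseteq U_i$.

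Fix a chart $U = \psi(V \times W)$ and write $g$ for the metric in the ambient coordinates. \cref{lem:disintegration-pushforward} applied with $f = \psi$ and the second map equal to $\obsop$ (so their composite is $\pi_V$) gives $\disint{(\left.\prior\right\rvert_U)}{\obsop}[\obs] = \pushfw{\bigl(\disint{\nu}{\pi_V}[\obs]\bigr)}{\psi}$, where $\nu$ is the image of $\left.\prior\right\rvert_U$ under $\psi\inv$, having $(\lebesgue_V \times \lebesgue_W)$-density $(v, w) \mapsto \rho(\psi(v,w)) \, \sqrt{\det g(\psi(v,w))} \, \abs{\det \jacobian{\psi}(v,w)}$ by change of variables. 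Combining \cref{lem:disintegration-dominated} with \cref{lem:disintegration-product} (applicable since $\lebesgue_W$ is finite) and \cref{lem:disintegration-bijection} evaluates the disintegration of $\lebesgue_V \times \lebesgue_W$ along $\pi_V$ as $\lebesgue_W(W)\inv \, \delta_\obs \times \lebesgue_W$, so $\disint{\nu}{\pi_V}[\obs]$ is the normalised $\obs$-section of the density of $\nu$, supported on $\set{\obs} \times W$. Pushing this forward along the $C^1$ embedding $\psi(\obs, \openarg) \colon W \to \obsfib$, under which $\volmeas{\obsfib}$ pulls back to $\sqrt{\det(\jacobian[w]{\psi}\T\, g\, \jacobian[w]{\psi})} \, \lebesgue_W$, shows $\disint{(\left.\prior\right\rvert_U)}{\obsop}[\obs] \ll \volmeas{\obsfib}$ with density at $x = \psi(\obs, w)$ proportional to
\begin{equation*}
  \rho(x) \cdot \frac{\sqrt{\det g(x)} \, \abs{\det \jacobian{\psi}(\obs,w)}}{\sqrt{\det\bigl( \jacobian[w]{\psi}(\obs,w)\T\, g(x)\, \jacobian[w]{\psi}(\obs,w)\bigr)}}.
\end{equation*}

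The remaining task is to recognise the fraction as $\abs{\volmeas{\kernel{\jacobian{\obsop}(x)}^\perp}(x)[\nabla\obsop(x)]}\inv$, which is pure linear algebra. Differentiating $\obsop \circ \psi = \pi_V$ gives $\jacobian{\obsop}(x) = P M\inv$, with $M := \jacobian{\psi}(\obs,w)$ and $P$ the projection onto the first $n$ coordinates; setting $G := M\T g(x) M$, one reads off that $\jacobian{\obsop}(x)\, g(x)\inv\, \jacobian{\obsop}(x)\T$ is the leading $n \times n$ block of $G\inv$, that $\jacobian[w]{\psi}(\obs,w)\T g(x)\, \jacobian[w]{\psi}(\obs,w)$ is the trailing $(d-n) \times (d-n)$ block of $G$, and that $(\det M)^2 \det g(x) = \det G$. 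Jacobi's identity on complementary minors --- $\det G$ times the determinant of the leading block of $G\inv$ equals the determinant of the trailing block of $G$ --- then shows the square of the fraction equals $\det\bigl(\jacobian{\obsop}(x)\, g(x)\inv\, \jacobian{\obsop}(x)\T\bigr)\inv$; and since the gradients $\nabla\obsop_1(x), \dots, \nabla\obsop_n(x)$, forming the columns of $g(x)\inv \jacobian{\obsop}(x)\T$, span $\kernel{\jacobian{\obsop}(x)}^\perp$ with Gram matrix $\jacobian{\obsop}(x)\, g(x)\inv\, \jacobian{\obsop}(x)\T$, that determinant's square root is $\abs{\volmeas{\kernel{\jacobian{\obsop}(x)}^\perp}(x)[\nabla\obsop(x)]}$, giving the claimed form on each chart.

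Finally, set $\varphi := \rho \cdot \abs{\volmeas{\kernel{\jacobian{\obsop}}^\perp}[\nabla\obsop]}\inv$, a strictly positive function on $\obsfib$. By \cref{lem:disintegration-restriction}, each $\disint{(\left.\prior\right\rvert_{A_i})}{\obsop}[\obs]$ equals the restriction to $\obsfib \cap A_i$ of the chart result, renormalised; hence $\pushfw{(\left.\prior\right\rvert_{A_i})}{\obsop}$ has $\lebesgue_{\R^n}$-density $\obs \mapsto \int_{\obsfib \cap A_i} \varphi \, \volmeas{\obsfib}(\dd{x})$, and summing over the partition, $\predictive$ has density $Z(\obs) := \int_{\obsfib} \varphi \, \volmeas{\obsfib}(\dd{x})$. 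One then verifies directly that the transition kernel $\obs \mapsto Z(\obs)\inv \varphi \, \volmeas{\obsfib}$ satisfies \cref{def:disintegration} --- property \labelcref{item:disintegration-support} being immediate and property \labelcref{item:disintegration-law-of-total-measure} following by summing $\prior(\latset \cap A_i) = \int \disint{(\left.\prior\right\rvert_{A_i})}{\obsop}[\obs](\latset) \, \pushfw{(\left.\prior\right\rvert_{A_i})}{\obsop}(\dd{\obs})$ over $i$ and using $\prior = \sum_i \left.\prior\right\rvert_{A_i}$ --- so by the almost-everywhere uniqueness of \cref{thm:disintegration} it is a version of $\priordisint[]$, with $\volmeas{\obsfib}$-density proportional to $\varphi$, as asserted. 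The routine but lengthy part is the factor bookkeeping in the chart and the reassembly; the conceptual crux is the collapse of the transformation ratio to the coarea factor via Jacobi's complementary-minor identity. (The same density can be read off more directly from the smooth coarea formula, as in \citet{diaconis_sampling_2013}, but we stay within this section's building blocks.)
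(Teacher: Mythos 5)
Your proposal is correct and follows essentially the same route as the paper's proof: localise via the submersion normal form from the implicit/inverse function theorem, evaluate the disintegration in the chart using \cref{lem:disintegration-pushforward,lem:disintegration-dominated,lem:disintegration-product,lem:disintegration-bijection}, identify the resulting Jacobian factor with $\abs{\volmeas{\kernel{\jacobian{\obsop}(x)}^\perp}(x)[\nabla\obsop(x)]}\inv$, and patch via \cref{lem:disintegration-restriction}. The only differences are presentational: you carry out the volume-form identification by explicit Gram-matrix computations and Jacobi's complementary-minor identity where the paper invokes \cref{eq:diff-form-product}, and your global reassembly (disjointified cover plus direct verification of \cref{def:disintegration} and appeal to uniqueness) is somewhat more explicit than the paper's ``patching together open sets''.
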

\begin{proof}
  Let $\obs\in \R^n$ a $C^1$ regular value of $h$ and $\latval \in \obsfib$. By the implicit function theorem (\cref{thm:implicit-function}) there are bounded neighbourhoods $U$ of $x$ in $\latsp$, $V$ of $y$ in $\obssp=\R^n$, $W$ of $ 0$ in $\R^{d-n}$ and a $C^1$ diffeomorphism $ \varphi\colon U \to V\times W$ such that $\pi_V \circ \varphi = \obsop$, where $\pi_V\colon V\times W \to V$ is the projection. Let $\lebesgue_{V\times W}$, $\lebesgue_V$, $\lebesgue_W$ be the Lebesgue measures on $V\times W$, $V$ and $W$ respectively. Then, by \cref{eq:riemann-volume-measure-pushforward}, the pushforward of the Riemannian volume measure on $U$ through the chart is given by\footnote{We abstract away the manifold variable in the notation of this proof, e.g.~$\volmeas{U}[\nabla  \varphi\circ\varphi\inv]$ stands for the function $z\mapsto \volmeas{U}(z)[\nabla  \varphi(\varphi\inv(z))]$}
  \begin{equation*}
       \varphi_\star\volmeas{U} = \abs{\volmeas{U}[\nabla  \varphi\circ\varphi\inv]}\inv \lebesgue_{V\times W} = \abs{\volmeas{U}[\nabla  \varphi\circ\varphi\inv]}\inv(\lebesgue_V\times \lebesgue_W).
  \end{equation*}
  So, disintegrating this measure
  \begin{align*}
      \disint{( \varphi_\star\volmeas{U})}{\pi_V}[\obs] &\propto \abs{\volmeas{U}[\nabla  \varphi\circ\varphi\inv]}\inv\disint{\lebesgue_{V\times W}}{\pi_V}[\obs] &\text{(\cref{lem:disintegration-dominated})}\\
      &\propto \abs{\volmeas{U}[\nabla  \varphi\circ\varphi\inv]}\inv(\disint{\lebesgue_V}{\operatorname{id}_V}[\obs]\times \lebesgue_W) &\text{(\cref{lem:disintegration-product})}\\
      &= \abs{\volmeas{U}[\nabla  \varphi\circ\varphi\inv]}\inv(\delta_\obs\times \lebesgue_W) &\text{(\cref{lem:disintegration-bijection})}
  \end{align*}
  hence
  \begin{align*}
      \disint{\prior|_U}{\obsop}[\obs] &\propto \dv{\prior}{\volmeas{U}} \disint{\volmeas{U}}{\obsop}[\obs] &\text{(\cref{lem:disintegration-dominated})}\\
        & = \dv{\prior}{\volmeas{U}} \varphi_\star\inv\left(\disint{(\varphi_\star\volmeas{U})}{ \pi_V}[\obs]\right) &\text{(\cref{lem:disintegration-pushforward})} \\
         & \propto \dv{\prior}{\volmeas U}{} \abs{\volmeas{U}[\nabla  \varphi]}\inv  \varphi_\star\inv(\delta_{\obs} \times \lebesgue_W).
  \end{align*}
  We also have, by \cref{eq:riemann-volume-measure-pushforward}, the pushforward Riemannian volume measure on submanifold $\obsfib\cap U$ of $U$ is given by
  \begin{equation*}
      \varphi_\star\volmeas{\obsfib\cap U} =\abs{\volmeas{\obsfib\cap U}[\nabla  \varphi|_{\kernel{\jacobian{\obsop}{}}}\circ \varphi\inv]}\inv(\delta_y \times \lebesgue_W)
  \end{equation*}
  where we used that $\mathrm T_x \obsfib\cap U = \kernel{\jacobian{\obsop}(x)}$. So
  \begin{equation*}
      \volmeas{\obsfib\cap U} = \abs{\volmeas{\obsfib\cap U}[\nabla  \varphi|_{\kernel{\jacobian{\obsop}{}}}]}\inv \varphi_\star\inv(\delta_{\obs} \times \lebesgue_W).
  \end{equation*}
  Thus we see that $\disint{\prior|_U}{\obsop}[y] \ll \volmeas{\obsfib\cap U}$ and,  by \cref{eq:diff-form-product},
  \begin{align*}
      \dv{\disint{\mu|_U}{h}[\obs]}{\volmeas{\obsfib\cap U}} &\propto \dv{\prior}{\volmeas{U}}\abs{\frac{\volmeas{\obsfib\cap U}[\nabla  \varphi|_{\kernel{\jacobian{\obsop}{}}}]}{\volmeas{U}[\nabla  \varphi]}} \\
      &= \dv{\prior}{\volmeas{U}}\abs{\frac{\volmeas{\obsfib\cap U}[\nabla  \varphi|_{\kernel{\jacobian{\obsop}{}}}]}{\volmeas{\obsfib\cap U}[\nabla  \varphi|_{\kernel{\jacobian{\obsop}{}}}]\cdot \volmeas{\kernel{\jacobian{\obsop}}^\perp}[\nabla\varphi|_{\kernel{\jacobian{\obsop}}^\perp}]}} \\
      &= \dv{\prior}{\volmeas{U}} \abs{\volmeas{\kernel{\jacobian{\obsop}}^\perp}[\nabla\varphi|_{\kernel{\jacobian{\obsop}}^\perp}]}\inv.
  \end{align*}
  $x\in\obsfib$ was arbitrary, and $U$ was chosen as a neighbourhood of $x$. Moreover $\left.\volmeas{\obsfib} \right\rvert_U = \volmeas{\obsfib\cap U}$, and by \cref{lem:disintegration-restriction} $\left. \priordisint \right\rvert_U \propto \disint{\mu|_U}{h}[\obs]$. Thus, patching together open sets $U$ we get
  \begin{equation*}
      \dv{\priordisint}{\volmeas{\obsfib}} \propto
      \dv{\prior}{\volmeas{\latsp}} \cdot \abs{\volmeas{\kernel{\jacobian{\obsop}}^\perp}[\nabla\obsop]}\inv.
  \end{equation*}
\end{proof}

In the case $\latsp = \R^d$, with the standard Euclidean metric we get $\volmeas{\R^d} = \lambda$ the Lebesgue measure, and the restricted volume form term involves the determinant of the Jacobian of $h$, restricted to the orthogonal complement of its kernel. This observation yields the special case $\latsp=\R^d$ result from the introduction (\cref{thm:disint-density-Rd}).

\begin{remark} \label{rmk:disintegration-riemannian-structure}
  In \cref{thm:disintegration-density}, the disintegration does not depend on the choice of Riemannian metric. 
  Instead, the Riemannian metric provides a way to constructively represent the disintegration.
  We endow $\latsp$ with a Riemannian geometry only to provide a reference measure, $\volmeas{\obsfib}$, with-respect-to which we can then construct a density for the disintegration. Note that Riemannian metric can always be constructed on any $C^k$ manifold with $k\geq 1$ (\cref{rmk:riemann-existence}).

  Particularly pertinent for \cref{sec:rcp-weak-modes} is that if $\latsp$ has a metric structure, that structure need not have any relationship with the Riemannian distance function implied by the Riemannian metric chosen.
\end{remark}

\begin{remark}\label{rmk:disintegration-fibers}
    \Cref{thm:disintegration-density} shows that, for fixed $\obs\in\obssp$, $\priordisint$ does not merely depend on $\obsop$ through $\obsfib$, but also on the behaviour of $\obsop$ in a neighbourhood of $\obs$, through its gradient. However, comparing this with \cref{prop:disintegration-equiv-obs}, we see that it is not the \emph{values} of $h$ that affect the disintegration; \cref{prop:disintegration-equiv-obs} rather shows that $\priordisint$ only depends on $\obsop$ through the family of fibers $\{\obsop\inv(\{\obs'\})\}_{\obs'\in\obssp}$ with distinguished element $\obsfib$.

\end{remark}

\begin{example}
  \label{ex:gaussian-ellipse}
  Consider a standard Gaussian prior $\prior = \gaussian{0}{I}$ on $\latsp = \R^2$.
  Suppose we wish to condition on an exact observation $\obs \in \obssp = \R_{\ge 0}$ made through the quadratic observation operator $\obsop(\latval) = \frac{x_1^2}{a^2} + \frac{x_2^2}{b^2}$ with $a = 1$ and $b = \frac{1}{2}$.
  In this case the observation fibers $\obsfib$ are centered, axis-aligned ellipses with width $2 a \sqrt{\obs}$ and height $2 b \sqrt{\obs}$.
  The above is visualized in \cref{fig:mode_example:prior}.
  In the \namecref{fig:mode_example}, we choose a uniformly spaced grid of observations $\obs \in \obssp$ to visualize different regular conditional distributions simultaneously.
  In this setting, \cref{asm:prior-observation} is fulfilled (for $k = \infty$), which means that we can construct both the restricted measures $\prior_{\obsfib}$ from \cref{def:restricted-measure} as well as the version of the disintegration from \cref{thm:disintegration-density} (or, more specifically, \cref{thm:disint-density-Rd}), which we visualize in \cref{fig:mode_example:restriction,fig:mode_example:disintegration}, respectively.
  In these plots, every fiber uses its own independent color map, which means that colors are not comparable between fibers.
  We can observe a stark contrast between the restriction densities and the disintegration densities along the fibers closest to the origin.

  This is due to the corrective term involving the volume form in \cref{thm:disintegration-density}:
  Here, by \cref{thm:disint-density-Rd}, this term simplifies to $\norm{\nabla \obsop(\latval)}_2^{-1}$.
  We have
  \begin{equation*}
    \norm{\nabla \obsop(\latval)}_2^{-1}
    = \frac{1}{2} \sqrt{\frac{x_1^2}{a^4} + \frac{x_2^2}{b^4}}^{-1}
    = \frac{a}{2} \sqrt{\frac{x_1^2}{a^2} + \frac{x_2^2}{b^2} \cdot \frac{a^2}{b^2}}^{-1}
    \le \frac{a}{2} \sqrt{\frac{x_1^2}{a^2} + \frac{x_2^2}{b^2}}^{-1}
    = \frac{a}{2 \sqrt{\obs}},
  \end{equation*}
  since $\frac{a^2}{b^2} \ge 1$.
  By an analogous argument, $\norm{\nabla \obsop(\latval)}_2^{-1} \ge \frac{b}{2 \sqrt{\obs}}$, where equalities are attained at the intersections with the respective half-axes of the ellipse.
  This means that the corrective term is larger if $\obs$ is small, i.e., for fibers that are closer to the origin.
  Moreover, the corrective term varies in strength by a factor of $\frac{a}{b}$ between the poles of the ellipse.
  
  Since the observations in \cref{fig:mode_example} are uniformly spaced, the relative magnitude of this term can also be read off the plot from the ``density'' of the fibers in the normal direction, as formalised by the inverse-magnitude of the gradient in that direction.
  Intuitively speaking, the corrective factor accounts for the fact that the law of total probability encoded in \cref{def:disintegration}\labelcref{item:disintegration-law-of-total-measure} still requires the predictive probability mass to be distributed locally between fibers, even if those fibers lie less dense.
  This explains the discrepancy between the restriction and disintegration densities.
  For a more direct comparison of the densities, we also single out an observation at $\obs = 1.01$ and parameterize the corresponding elliptical fiber curve by arc length starting at the point and velocity vector marked in red in \cref{fig:mode_example:restriction,fig:mode_example:disintegration}.
  \Cref{fig:mode_example:densities} show the densities along this fiber curve.
  Note that, due to the arclength parameterization, no additional change of variables is needed.
  Remarkably, this panel also shows that the modes of the restriction and disintegration densities are maximally distant.
\end{example}

\section{Modes of Disintegrations}
\label{sec:rcp-weak-modes}
For probability measures $\prior$ on Euclidean spaces that have a density $\dv{\prior}{\lebesgue}$ with respect to the Lebesgue measure $\lebesgue$, the modes are defined as the maximizers of said density.
However, neither the restricted measures nor the elements of the disintegrations constructed in \cref{sec:disintegration-manifolds} have densities with respect to $\lebesgue$ in general, as they are supported on lower dimensional fibers.
Luckily, there are generalizations of this notion of mode, which measure the local accumulation of probability mass by comparing the probability of metric balls in the small noise limit.
We first recap the relevant definitions of and key results about these generalized notions of modes below.

For the remainder of the article, $\latsp$ will be a metric space and $\prior$ a locally finite measure on the Borel $\sigma$-algebra of $\latsp$.
\begin{definition}[Modes]
  \label{def:modes}
  A point $\mode \in \operatorname{supp}(\prior) \subset \latsp$ is called a
  \begin{enumerate}[label=(\roman*)]
    \item \label{def:modes:item:weak}%
      \emph{weak mode} of $\prior$ if
      \begin{equation*}
        \limsup_{r \downarrow 0} \frac{\prior(\oball{r}{\latval})}{\prior(\oball{r}{\mode})} \le 1
      \end{equation*}
      for all $\latval \in \latsp$, and a
    \item \label{def:modes:item:strong}%
      \emph{strong mode} of $\prior$ if
      \begin{equation*}
        \limsup_{r \downarrow 0} \frac{\sup_{\latval \in \latsp} \prior(\oball{r}{\latval})}{\prior(\oball{r}{\mode})} \le 1.
      \end{equation*}
  \end{enumerate}
\end{definition}

The definitions above are due to \citet[Definition 3.1]{Dashti2013MAPBayesianInverseProblems}, \citet[Definition 4]{Helin2015MaximumPosterioriProbability}, and, in the form presented here, \citet[Definition 3.6 and 3.7]{Ayanbayev2022GammaConvergence}.
However, all of the above only define modes of probability measures, so we restate definitions and results here (often without proof) if they also hold in our more general setting of locally finite measures.

The following result motivates the naming of weak and strong modes.

\begin{proposition}[{\citealp[Proposition 3.9]{Ayanbayev2022GammaConvergence}}]
  Every strong mode $\latval \in \latsp$ of $\prior$ is a weak mode of $\prior$.
\end{proposition}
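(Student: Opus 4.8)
The plan is to observe that the strong-mode inequality is nothing but the weak-mode inequality with the numerator $\prior(\oball{r}{\latval})$ replaced by its supremum over all admissible centres; hence, uniformly in the radius $r$, each instance of the weak condition is dominated by the single strong condition, and the implication follows by monotonicity of $\limsup$.

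Concretely, I would first fix an arbitrary $\latval \in \latsp$. Because $\mode \in \operatorname{supp}(\prior)$ (which holds by hypothesis, since $\mode$ is a strong mode), every ball $\oball{r}{\mode}$ with $r>0$ satisfies $\prior(\oball{r}{\mode}) > 0$, so the ratios below are well-defined for all $r>0$. For every such $r$ the elementary bound $\prior(\oball{r}{\latval}) \le \sup_{\latval' \in \latsp}\prior(\oball{r}{\latval'})$ holds, and dividing by the positive number $\prior(\oball{r}{\mode})$ gives
\begin{equation*}
  \frac{\prior(\oball{r}{\latval})}{\prior(\oball{r}{\mode})}
  \le
  \frac{\sup_{\latval' \in \latsp}\prior(\oball{r}{\latval'})}{\prior(\oball{r}{\mode})}
  \qquad \text{for all } r > 0 .
\end{equation*}

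Next I would pass to the $\limsup$ as $r \downarrow 0$. Since $f(r) \le g(r)$ on a punctured right-neighbourhood of $0$ implies $\limsup_{r\downarrow 0} f(r) \le \limsup_{r\downarrow 0} g(r)$, and since $\mode$ is a strong mode, this yields
\begin{equation*}
  \limsup_{r \downarrow 0}\frac{\prior(\oball{r}{\latval})}{\prior(\oball{r}{\mode})}
  \le
  \limsup_{r \downarrow 0}\frac{\sup_{\latval' \in \latsp}\prior(\oball{r}{\latval'})}{\prior(\oball{r}{\mode})}
  \le 1 .
\end{equation*}
As $\latval \in \latsp$ was arbitrary and $\mode \in \operatorname{supp}(\prior)$, this is exactly the definition of a weak mode, so $\mode$ is a weak mode of $\prior$. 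I do not anticipate any genuine obstacle: the argument is a one-line monotonicity property of $\limsup$, and the only point that needs an explicit (one-sentence) remark is that the denominator $\prior(\oball{r}{\mode})$ is strictly positive, which is precisely what membership of $\mode$ in $\operatorname{supp}(\prior)$ provides; note that local finiteness of $\prior$ is not required for this particular implication.
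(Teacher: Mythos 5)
Your proof is correct and is essentially the same monotonicity-of-$\limsup$ argument that the paper (citing \citealp[Proposition 3.9]{Ayanbayev2022GammaConvergence}) relies on, and which it writes out explicitly for the analogous statement about $\obsfib$-restricted modes. Your additional remark that $\prior(\oball{r}{\mode}) > 0$ follows from $\mode \in \operatorname{supp}(\prior)$, so the ratios are well-defined, is a correct and worthwhile clarification.
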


Just as for modes defined via Lebesgue densities, weak modes can be characterized as the solutions of optimization problems.
However the objective function of these optimization problems needs to be defined only using properties of the measure $\prior$.
These objective functions are given by Onsager-Machlup functionals, as in the next definition.

\begin{definition}[Onsager-Machlup Functional; {\citealp[see][Definition 3.1]{Ayanbayev2022GammaConvergence}}]
  Let $\omdom \subset \operatorname{supp}(\prior) \subset \latsp$ be non-empty.
  A function $\omfctl \colon \omdom \to \R$ is called an \emph{Onsager-Machlup (OM) functional} for $\prior$ if
  \begin{equation*}
    \lim_{r \downarrow 0} \frac{\prior(\oball{r}{\latval_1})}{\prior(\oball{r}{\latval_2})}
    = \exp(\omfctl(\latval_2) - \omfctl(\latval_1))
  \end{equation*}
  for all $\latval_1, \latval_2 \in \omdom$.
  An OM functional is called \emph{exhaustive} if
  \begin{equation*}
    \lim_{r \downarrow 0} \frac{\prior(\oball{r}{\latval_1})}{\prior(\oball{r}{\latval_2})} = 0
  \end{equation*}
  for all $\latval_1 \in \latsp \setminus \omdom$ and $\latval_2 \in \omdom$.
\end{definition}
As noted by \citet{Ayanbayev2022GammaConvergence}, the exhaustiveness property ensures that the domain of definition of the OM function is in some sense ``maximal''.
\Cref{lem:exhaustive-om} is sometimes useful to verify exhaustiveness of an OM functional.

The weak modes of the measure $\prior$ can be characterized as the solutions to a minimization problem whose objective function is given by the OM functional.
Intuitively speaking, the OM functional takes the role of a negative log Legesgue density.

\begin{theorem}[{\citealp[][Proposition 4.1]{Ayanbayev2022GammaConvergence}}]
  \label{thm:omfctl-weak-modes}
  If $\omfctl \colon \omdom \to \R$ is an exhaustive OM functional for $\prior$, then $\mode$ is a weak mode of $\prior$ if and only if
  \begin{equation*}
    \mode \in \argmin_{\latval \in \omdom} \omfctl(\latval).
  \end{equation*}
\end{theorem}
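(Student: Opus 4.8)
The plan is to derive both implications directly from the two definitions, treating separately the test points $\latval$ that lie in $\omdom$ — where the OM identity pins down the limiting ball ratio exactly — and those in $\latsp \setminus \omdom$ — where exhaustiveness forces that ratio to $0$. Before starting I would record once that every denominator $\prior(\oball{r}{\mode})$ appearing below is strictly positive, since any weak mode and any point of $\omdom$ lies in $\operatorname{supp}(\prior)$, and that all these ball measures are finite for small $r$ by local finiteness of $\prior$; this is what makes the ratios and their limits well defined.

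For the ``if'' direction I would assume $\mode \in \argmin_{\latval \in \omdom} \omfctl(\latval)$, so in particular $\mode \in \omdom \subset \operatorname{supp}(\prior)$, which is the support membership required of a weak mode. Fix an arbitrary $\latval \in \latsp$. If $\latval \in \omdom$, the OM property gives $\lim_{r \downarrow 0} \prior(\oball{r}{\latval})/\prior(\oball{r}{\mode}) = \exp(\omfctl(\mode) - \omfctl(\latval))$, which is $\le 1$ because $\omfctl(\mode) \le \omfctl(\latval)$ by minimality. If instead $\latval \in \latsp \setminus \omdom$, exhaustiveness gives that this limit equals $0$, hence is again $\le 1$. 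In both cases the corresponding $\limsup$ is $\le 1$, so $\mode$ is a weak mode.

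For the ``only if'' direction I would assume $\mode$ is a weak mode and first argue that $\mode \in \omdom$. Suppose not; since $\omdom \neq \emptyset$ we may pick $\latval_2 \in \omdom$, and exhaustiveness applied with $\latval_1 = \mode$ gives $\prior(\oball{r}{\mode})/\prior(\oball{r}{\latval_2}) \to 0$, hence $\prior(\oball{r}{\latval_2})/\prior(\oball{r}{\mode}) \to \infty$, which contradicts the weak-mode inequality $\limsup_{r \downarrow 0} \prior(\oball{r}{\latval_2})/\prior(\oball{r}{\mode}) \le 1$. Thus $\mode \in \omdom$, and now the OM identity is applicable at $\mode$: for any $\latval \in \omdom$, $\lim_{r \downarrow 0} \prior(\oball{r}{\latval})/\prior(\oball{r}{\mode}) = \exp(\omfctl(\mode) - \omfctl(\latval))$, and the weak-mode bound forces this limit to be $\le 1$, i.e.\ $\omfctl(\mode) \le \omfctl(\latval)$. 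Hence $\mode$ minimises $\omfctl$ over $\omdom$.

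I do not anticipate a genuine obstacle here: the argument is essentially an exercise in unwinding the two definitions. The one step that really uses the hypotheses in full is establishing $\mode \in \omdom$ in the ``only if'' direction — a weak mode is a priori only known to lie in $\operatorname{supp}(\prior)$, which may strictly contain $\omdom$, and it is precisely exhaustiveness together with non-emptiness of $\omdom$ that rules out $\mode \in \operatorname{supp}(\prior) \setminus \omdom$ and thereby makes the OM identity usable at $\mode$. The only other care needed is bookkeeping the positivity and finiteness of the ball masses, which I would dispatch at the outset using support membership and local finiteness.
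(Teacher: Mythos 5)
Your proof is correct and follows essentially the same route as the argument this result rests on (the paper cites it to Ayanbayev et al., Proposition 4.1, and reproduces the same reasoning when proving the restricted-mode analogue, \cref{prop:weak-restricted-modes-omfctl}): unwind the OM identity on $\omdom$, and use exhaustiveness via the reciprocal-limit contradiction to exclude candidate modes in $\operatorname{supp}(\prior)\setminus\omdom$. Your explicit bookkeeping of positivity of the ball masses and of the non-emptiness of $\omdom$ is exactly the care the argument needs.
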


We will now make use of \cref{thm:disintegration-density} to characterize the modes of the disintegration $\priordisint[]$ by constructing exhaustive OM functionals.

\begin{remark}
Our characterisation is complicated by the fact that a disintegration is only $\predictive$-almost-surely unique; in other words, there are (typically infinitely-many) \emph{versions} of a disintegration that differ on $\predictive$-null sets.
To simplify terminology we focus on the specific version of the disintegration constructed in \cref{thm:disintegration-density}, and say that a function is an (exhaustive) OM functional for the disintegration $\priordisint$ if it is an (exhaustive) OM functional for the version from \cref{thm:disintegration-density}, which is well-defined under \cref{asm:prior-observation} if $\obs$ is a regular value of $\obsop$.
Note that, under this assumption, the non-regular values of $\obsop$ form a $\predictive$-null set.
This ensures that the functions given below are still (exhaustive) OM functionals for $\predictive$-almost all $\obs \in \obssp$ in all other versions of the disintegration.
\end{remark}

The general mode theory presented thus far only required $\latsp$ to be a metric space.
However, we will now work under \cref{asm:prior-observation} in order for \cref{thm:disintegration-density} to hold.
This assumes in particular that $\latsp$ is a manifold, and we implicitly require that the manifold topology coincides with the metric space topology.
As mentioned in \cref{rmk:disintegration-riemannian-structure}, we do not generally require the Riemannian distance function (see \cref{def:riemann-distance}) to coincide with the metric space structure.
This gives us the flexibility to study \emph{non-Riemannian} metric space structures, for example the one induced by $\ell_p$ norms in \cref{sec:failure-modes}.
We will later study the special case where the metric space structure coincides with the Riemannian distance function (\cref{prop:omfctl-volmeas,cor:rcp-weak-modes-riemann}).

\begin{theorem}
  \label{thm:rcp-weak-modes}
  Under \cref{asm:prior-observation} with $k = 1$, equip $\latsp$ with a Riemannian metric.
  Assume that $\obs \in \obssp$ is a regular value and that the Riemannian restricted measure $\prior_{\obsfib}$ admits an OM functional $\omfctl[\prior_{\obsfib}] \colon \omdom[\prior_{\obsfib}] \to \R$.
  Then the disintegration $\priordisint$ admits an OM functional $\omfctl[\priordisint] \colon \omdom[\priordisint] \to \R$, with $\omdom[\priordisint] \defeq \omdom[\prior_{\obsfib}]$, and
  \begin{equation*}
    \omfctl[\priordisint](\latval)
    \defeq \omfctl[\prior_{\obsfib}](\latval) + \log \abs{\volmeas{\kernel{\jacobian{\obsop}(\latval)}^\perp}(\latval)[\nabla\obsop(\latval)]}.
  \end{equation*}
  Furthermore if $\omfctl[\prior_{\obsfib}]$ is exhaustive then $\omfctl[\priordisint]$ is exhaustive.
\end{theorem}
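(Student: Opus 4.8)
The plan is to derive the Onsager--Machlup functional of the disintegration directly from the one of the Riemannian restricted measure, using the density relation between the two measures supplied by \cref{thm:disintegration-density}. The key observation is that $\priordisint$ and $\prior_{\obsfib}$ are mutually absolutely continuous (up to a normalising constant) on the fiber $\obsfib$, with Radon--Nikodym derivative
\begin{equation*}
  \dv{\priordisint}{\prior_{\obsfib}}(\latval)
  \propto \abs{\volmeas{\kernel{\jacobian{\obsop}(\latval)}^\perp}(\latval)[\nabla\obsop(\latval)]}\inv,
\end{equation*}
which follows by dividing the density formula of \cref{thm:disintegration-density} by that of \cref{def:restricted-measure}. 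Since $\latval$ is assumed $C^1$ regular for $\prior$-almost all points, and $\obs$ is a regular value, this density is a well-defined, strictly positive, continuous function on $\obsfib$. Denote it $g(\latval)$, so $g = c \cdot \abs{\volmeas{\kernel{\jacobian{\obsop}(\latval)}^\perp}(\latval)[\nabla\obsop(\latval)]}\inv$ for some constant $c>0$.

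First I would reduce the statement to a general lemma: if $\nu$ and $\nu'$ are locally finite Borel measures on a metric space with $\nu' = g\,\nu$ for a continuous strictly positive $g$, and $\nu$ has OM functional $\omfctl[\nu]$ on $\omdom[\nu]$, then $\nu'$ has OM functional $\omfctl[\nu'] = \omfctl[\nu] - \log g$ on the same domain. The proof is a direct estimate of the ball ratio: for $\latval_1,\latval_2 \in \omdom[\nu]$ write
\begin{equation*}
  \frac{\nu'(\oball{r}{\latval_1})}{\nu'(\oball{r}{\latval_2})}
  = \frac{\int_{\oball{r}{\latval_1}} g \,\dd\nu}{\int_{\oball{r}{\latval_2}} g \,\dd\nu},
\end{equation*}
and sandwich $g$ on the small ball $\oball{r}{\latval_i}$ between $g(\latval_i)(1\pm\eps)$ using continuity of $g$ and the fact that $\nu'(\oball{r}{\latval_i})>0$ for small $r$ (because $\latval_i\in\operatorname{supp}(\nu')=\operatorname{supp}(\nu)$, as $g>0$). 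Letting $r\downarrow 0$ and then $\eps\downarrow 0$ gives
\begin{equation*}
  \lim_{r\downarrow 0}\frac{\nu'(\oball{r}{\latval_1})}{\nu'(\oball{r}{\latval_2})}
  = \frac{g(\latval_1)}{g(\latval_2)}\cdot\exp(\omfctl[\nu](\latval_2)-\omfctl[\nu](\latval_1))
  = \exp\bigl((\omfctl[\nu]-\log g)(\latval_2)-(\omfctl[\nu]-\log g)(\latval_1)\bigr),
\end{equation*}
which is exactly the OM property for $\omfctl[\nu] - \log g$. Applying this with $\nu = \prior_{\obsfib}$, $\nu' = \priordisint$, and $-\log g = \log\abs{\volmeas{\kernel{\jacobian{\obsop}(\latval)}^\perp}(\latval)[\nabla\obsop(\latval)]} - \log c$, and absorbing the additive constant $-\log c$ (OM functionals are only defined up to an additive constant, equivalently the normalisation of the measure is immaterial), yields the claimed formula for $\omfctl[\priordisint]$.

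For the exhaustiveness claim, I would run the same sandwich argument but now with $\latval_1 \in \latsp\setminus\omdom[\prior_{\obsfib}]$ and $\latval_2 \in \omdom[\prior_{\obsfib}]$. There is a subtlety: $g$ is only defined on $\obsfib$, and for $\latval_1 \notin \obsfib$ the ball $\oball{r}{\latval_1}$ eventually has $\priordisint$-measure zero (since $\priordisint$ is supported on $\obsfib$), so the ratio is $0$ trivially for small $r$; for $\latval_1 \in \obsfib \setminus \omdom[\prior_{\obsfib}]$ we again sandwich $g$ near $\latval_1$ (it is still continuous and positive there) and use that $\prior_{\obsfib}(\oball{r}{\latval_1})/\prior_{\obsfib}(\oball{r}{\latval_2})\to 0$ by exhaustiveness of $\omfctl[\prior_{\obsfib}]$, with the bounded factor $g(\latval_1)/g(\latval_2)$ not affecting the limit. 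Combining the two cases gives exhaustiveness of $\omfctl[\priordisint]$. The main obstacle I anticipate is the bookkeeping around the continuity and strict positivity of the corrective factor on all of $\obsfib$ (not merely $\prior$-almost everywhere) and confirming $\operatorname{supp}(\priordisint) = \obsfib = \operatorname{supp}(\prior_{\obsfib})$ so that the domains genuinely match; once those points are nailed down the ball-ratio estimates are routine.
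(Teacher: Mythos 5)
Your proposal is correct and follows essentially the same route as the paper: identify the Radon--Nikodym derivative of $\priordisint$ with respect to $\prior_{\obsfib}$ from \cref{thm:disintegration-density}, note that the corrective factor is continuous and strictly positive, and conclude that the OM functional shifts by its negative logarithm. The only difference is that you prove the reweighting step inline via the small-ball sandwich argument, whereas the paper delegates exactly this step to \cref{lem:omfctl-reweighted}.
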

\begin{proof}
  By \cref{thm:disintegration-density}, the disintegration $\priordisint$ can be constructed as
  \begin{equation*}
    \priordisint(\dd{x}) \propto \abs{\volmeas{\kernel{\jacobian{\obsop}(\latval)}^\perp}(\latval)[\nabla\obsop(\latval)]}\inv \prior_{\obsfib}(\dd{x}).
  \end{equation*}
  The function
  \begin{equation*}
    \latsp \to \R_{> 0},
    \latval \mapsto \abs{\volmeas{\kernel{\jacobian{\obsop}(\latval)}^\perp}(\latval)[\nabla\obsop(\latval)]}\inv
  \end{equation*}
  is continuous.
  Hence, the result follows by \cref{lem:omfctl-reweighted}.
\end{proof}

One might wonder whether the OM functional of the Riemannian restricted measure, $\omfctl[\prior_{\obsfib}](\latval)$, is given by $-\log\dv{\prior}{\volmeas{\latsp}}$.
This fails to hold for general metrics on $\latsp$ and, perhaps surprisingly, even for non-Euclidean normed spaces $\latsp$, as we will show in \cref{sec:failure-modes}.
However, in the following we show that this is indeed the case if the metric on $\latsp$ is induced by the Riemannian metric used to define $\volmeas{\latsp}$ and under mild regularity assumptions on $\latsp$, $\dv{\prior}{\volmeas{\latsp}}$ and $\obsop$.

Note that since volume measures on manifolds are locally finite, we can discuss OM functionals $\omfctl[\volmeas{\latsp}]$ and  $\omfctl[\volmeas{\obsfib}]$ for the Riemannian volume measures $\volmeas{\latsp}$ and $\volmeas{\obsfib}$ respectively.
\begin{assumption}
  \label{asm:continuous-density}
  $\prior$ is absolutely continuous with respect to a volume measure on the manifold $\latsp$, with continuous Radon-Nikodym derivative.
\end{assumption}
When $\latsp$ is equipped with a Riemannian metric, \cref{asm:continuous-density} implies that $\prior$ is absolutely continuous with respect to $\volmeas{\latsp}$ (\cref{rmk:volume-measures}) and $\dv{\prior}{\volmeas{\latsp}}$ is continuous.
\begin{corollary}
  \label{cor:rcp-weak-modes}
  Under \cref{asm:prior-observation} with $k = 1$ and \cref{asm:continuous-density}, assume that $\obs \in \obssp$ is a regular value and $\volmeas{\obsfib}$ admits an OM functional $\omfctl[\volmeas{\obsfib}] \colon \omdom[\volmeas{\obsfib}] \to \R$.
  Then the disintegration $\priordisint$ admits an OM functional $\omfctl[\priordisint] \colon \omdom[\priordisint] \to \R$ with $\omdom[\priordisint] \defeq \omdom[\prior_{\obsfib}] \cap \operatorname{supp}^\circ(\dv{\prior}{\volmeas{\latsp}})$\footnote{We define $\operatorname{supp}^\circ(f) \defeq \set{\latval \in \latsp \where f(\latval) > 0}$.} and
  \begin{equation*}
    \omfctl[\priordisint](\latval)
    \defeq \omfctl[\volmeas{\obsfib}](\latval) - \log \dv{\prior}{\volmeas{\latsp}}(\latval){(\latval)} + \log \abs{\volmeas{\kernel{\jacobian{\obsop}(\latval)}^\perp}(\latval)[\nabla\obsop(\latval)]}.
  \end{equation*}
  If, additionally, $\omfctl[\volmeas{\obsfib}]$ is exhaustive then $\omfctl[\priordisint]$ is exhaustive.
\end{corollary}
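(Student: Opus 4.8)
The plan is to obtain \cref{cor:rcp-weak-modes} by chaining \cref{thm:rcp-weak-modes} with the reweighting lemma \cref{lem:omfctl-reweighted}. \Cref{thm:rcp-weak-modes} already converts an OM functional of the Riemannian restricted measure $\prior_{\obsfib}$ into one of the disintegration $\priordisint$, so the only missing ingredient is an OM functional of $\prior_{\obsfib}$ expressed through that of the Riemannian volume measure $\volmeas{\obsfib}$, and this is exactly what the reweighting lemma delivers.

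First I would unpack \cref{def:restricted-measure}: the Riemannian restricted measure factors as $\prior_{\obsfib} = \dv{\prior}{\volmeas{\latsp}}\,\volmeas{\obsfib}$, i.e.\ it is the reweighting of $\volmeas{\obsfib}$ by $\rho \defeq \dv{\prior}{\volmeas{\latsp}}$. Once $\latsp$ carries the Riemannian metric, \cref{asm:continuous-density} together with \cref{rmk:volume-measures} guarantees that $\rho$ is a continuous, nonnegative function on $\latsp$. Applying \cref{lem:omfctl-reweighted} to $\volmeas{\obsfib}$ with this weight then shows that $\prior_{\obsfib}$ admits the OM functional $\omfctl[\prior_{\obsfib}] = \omfctl[\volmeas{\obsfib}] - \log\rho$ on the domain $\omdom[\prior_{\obsfib}] \defeq \omdom[\volmeas{\obsfib}] \cap \operatorname{supp}^\circ(\rho)$, and that $\omfctl[\prior_{\obsfib}]$ is exhaustive whenever $\omfctl[\volmeas{\obsfib}]$ is.

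Second I would feed this into \cref{thm:rcp-weak-modes}, whose hypotheses (\cref{asm:prior-observation} with $k = 1$, a Riemannian metric on $\latsp$, and $\obs$ a regular value) are all in force. That theorem yields that $\priordisint$ admits the OM functional $\omfctl[\priordisint] = \omfctl[\prior_{\obsfib}] + \log\abs{\volmeas{\kernel{\jacobian{\obsop}(\latval)}^\perp}(\latval)[\nabla\obsop(\latval)]}$ on the domain $\omdom[\prior_{\obsfib}]$, preserving exhaustiveness. Substituting $\omfctl[\prior_{\obsfib}] = \omfctl[\volmeas{\obsfib}] - \log\rho$ and using that $\omdom[\prior_{\obsfib}] = \omdom[\prior_{\obsfib}] \cap \operatorname{supp}^\circ(\rho)$ reproduces verbatim the asserted formula for $\omfctl[\priordisint]$, the asserted domain $\omdom[\priordisint]$, and the exhaustiveness claim.

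The one genuinely delicate point — and the step I would be most careful about — is the behaviour of the reweighting on the zero set of $\rho$. In \cref{thm:rcp-weak-modes} the reweighting factor is continuous and strictly positive, so the domain is unchanged; here $\rho = \dv{\prior}{\volmeas{\latsp}}$ may vanish on part of the fiber, so \cref{lem:omfctl-reweighted} must be applied in the form that permits a merely nonnegative (rather than strictly positive) continuous weight, where the domain genuinely shrinks to $\operatorname{supp}^\circ(\rho)$. Verifying exhaustiveness at a point $\latval$ with $\rho(\latval) = 0$ is the crux: it follows by squeezing $(\rho\,\volmeas{\obsfib})(\oball{r}{\latval})$ between $0$ and $\bigl(\sup_{\oball{r}{\latval}}\rho\bigr)\,\volmeas{\obsfib}(\oball{r}{\latval})$, where $\sup_{\oball{r}{\latval}}\rho \to 0$ as $r \downarrow 0$ by continuity, while for $\latval'$ in the domain $(\rho\,\volmeas{\obsfib})(\oball{r}{\latval'})$ stays comparable to $\volmeas{\obsfib}(\oball{r}{\latval'})$; everything else is bookkeeping with the two already-established results.
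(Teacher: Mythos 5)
Your proposal is correct and follows essentially the same route as the paper: apply \cref{lem:omfctl-reweighted} with the continuous weight $\dv{\prior}{\volmeas{\latsp}}$ to pass from $\omfctl[\volmeas{\obsfib}]$ to an OM functional for $\prior_{\obsfib}$ (with the domain cut down to $\operatorname{supp}^\circ(\dv{\prior}{\volmeas{\latsp}})$), then feed the result into \cref{thm:rcp-weak-modes}. The "delicate point" you flag about the zero set of the density is already absorbed into the statement of \cref{lem:omfctl-reweighted}, which only requires a nonnegative continuous weight, so no additional argument is needed there.
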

\begin{proof}
  The Radon-Nikodym derivative $\dv{\prior}{\volmeas{\latsp}}$ is continuous and hence, by \cref{lem:omfctl-reweighted}, $$\omfctl[\prior_{\obsfib}] = \omfctl[\volmeas{\obsfib}] - \log \dv{\prior}{\volmeas{\latsp}}(\latval)$$ is an OM functional for $\prior_{\obsfib}$.
  and $\omfctl[\prior_{\obsfib}]$ is exhaustive if $\omfctl[\volmeas{\obsfib}]$ is.
  The claim then follows by \cref{thm:rcp-weak-modes}.
\end{proof}

Next we establish that, when the metric on $\latsp$ is compatible with the Riemannian geometry, the OM functional is constant.
\begin{assumption}
  \label{asm:prior-observation-C2-riemannian}
  \leavevmode
  \begin{enumerate}[label=(\roman*)]
    \item \label{item:X-C2-riemannian}
      $\latsp$ is a $d$-dimensional $C^2$ Riemannian manifold which, as a metric space, is equipped with the Riemannian distance metric.
    \item \label{item:Y-C2-observation}
      $\obssp = \R^n$ for some $n \in \N$, and $\obsop \colon \latsp \to \obssp$
      such that the points in $\latsp$ are $\prior$-almost everywhere $C^2$ regular w.r.t.~$\obsop$.
  \end{enumerate}
\end{assumption}
\begin{proposition}[Riemannian Onsager-Machlup Functionals of Volume Measures]
  \label{prop:omfctl-volmeas}
  \leavevmode
  \begin{enumerate}[label=(\roman*)]
    \item \label{item:omfctl-volmeas-latsp}
      Under \cref{asm:prior-observation-C2-riemannian}\ref{item:X-C2-riemannian}, any constant function $\omfctl[\volmeas{\latsp}] \colon \latsp \to \R$ is an exhaustive OM functional for the Riemannian volume measure $\volmeas{\latsp}$, and,
    \item \label{item:omfctl-volmeas-obsfib}
      \sloppy under \cref{asm:prior-observation-C2-riemannian}, if $\obs\in\obssp$ is a regular value, any constant function $\omfctl[\volmeas{\obsfib}] \colon \omdom[\volmeas{\obsfib}] \to \R$ with $\omdom[\volmeas{\obsfib}] = \obsfib \subset \latsp$ is an exhaustive OM functional for $\volmeas{\obsfib}$.
  \end{enumerate}
\end{proposition}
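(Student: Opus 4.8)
The plan is to reduce the entire statement to the leading-order asymptotics of the volumes of small metric balls, using the fact that for a Riemannian volume measure the leading term is \emph{universal} — the leading constant does not depend on the base point. For part~\ref{item:omfctl-volmeas-latsp}, fix any $\latval \in \latsp$. The single fact I would establish is the standard small--geodesic-ball volume expansion
\begin{equation*}
  \volmeas{\latsp}(\oball{r}{\latval}) = \omega_d\, r^{d}\,(1 + o(1)) \qquad \text{as } r \downarrow 0,
\end{equation*}
where $\omega_d$ is the Lebesgue volume of the Euclidean unit $d$-ball, and the $o(1)$ remainder may depend on $\latval$ but $\omega_d$ does not. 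Since $\latsp$ is a $C^2$ Riemannian manifold whose metric space structure is the Riemannian distance (\cref{asm:prior-observation-C2-riemannian}\ref{item:X-C2-riemannian}), geodesic normal coordinates at $\latval$ are available: $\exp_\latval$ is a diffeomorphism from a Euclidean ball onto $\oball{r}{\latval}$ for small $r$, and pulls $\volmeas{\latsp}$ back to a density $J(v)\,\dd{v}$ with $J$ continuous and $J(0) = 1$; integrating over $\set{\abs{v} < r}$ and using continuity of $J$ gives the expansion. Dividing the expansion at two points $\latval_1,\latval_2$ yields $\lim_{r\downarrow 0}\volmeas{\latsp}(\oball{r}{\latval_1})/\volmeas{\latsp}(\oball{r}{\latval_2}) = 1 = \exp(c - c)$ for any constant $c$, so any constant function on $\latsp$ is an OM functional; exhaustiveness is vacuous because $\omdom[\volmeas{\latsp}] = \latsp$.

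For part~\ref{item:omfctl-volmeas-obsfib}, since $\obs$ is a regular value, every point of $\obsfib$ is a $C^2$ regular point of $\obsop$, so by the preimage theorem (\cref{thm:implicit-function}) $\obsfib$ is an embedded $C^2$ submanifold of $\latsp$ of dimension $d-n$ which, with the restricted metric, carries the volume measure $\volmeas{\obsfib}$ and to which part~\ref{item:omfctl-volmeas-latsp} applies intrinsically. I would first show that for $\latval \in \obsfib$
\begin{equation*}
  \volmeas{\obsfib}(\oball{r}{\latval}) = \omega_{d-n}\, r^{d-n}\,(1 + o(1)) \qquad \text{as } r \downarrow 0,
\end{equation*}
where $\oball{r}{\latval}$ is now the \emph{ambient} metric ball in $\latsp$. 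Because the ambient distance never exceeds the intrinsic distance of $\obsfib$, the intrinsic $r$-ball about $\latval$ in $\obsfib$ is contained in $\oball{r}{\latval}\cap\obsfib$; conversely, since $\obsfib$ is a $C^2$ submanifold, its intrinsic and ambient distance functions agree to first order near $\latval$ (the discrepancy controlled by the second fundamental form), so for small $r$ the set $\oball{r}{\latval}\cap\obsfib$ lies in one slice chart around $\latval$ and inside the intrinsic ball of radius $r\,(1 + O(r^2))$. Applying the intrinsic expansion to both bounds and squeezing gives the displayed asymptotic, whence $\lim_{r\downarrow 0}\volmeas{\obsfib}(\oball{r}{\latval_1})/\volmeas{\obsfib}(\oball{r}{\latval_2}) = 1$ for all $\latval_1,\latval_2 \in \obsfib$, so any constant function on $\obsfib = \omdom[\volmeas{\obsfib}]$ is an OM functional for $\volmeas{\obsfib}$.

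Exhaustiveness for part~\ref{item:omfctl-volmeas-obsfib} follows once I show $\operatorname{supp}(\volmeas{\obsfib}) = \obsfib$, i.e.\ that $\obsfib$ is closed in $\latsp$: if $\latval_n \in \obsfib$ with $\latval_n \to \latval$, then $\latval$ lies, for $n$ large, in the open neighbourhood on which $\obsop$ is $C^2$ around $\latval_n$, hence $\obsop$ is continuous at $\latval$ and $\obsop(\latval) = \lim_n \obsop(\latval_n) = \obs$, so $\latval \in \obsfib$. As $\obsfib$ is closed, $\volmeas{\obsfib}(\latsp\setminus\obsfib)=0$, and every point of $\obsfib$ lies in the support, we get $\operatorname{supp}(\volmeas{\obsfib})=\obsfib$; therefore for $\latval_1 \in \latsp\setminus\obsfib$ there is $r_0 > 0$ with $\volmeas{\obsfib}(\oball{r}{\latval_1}) = 0$ for all $r \le r_0$, so the ratio $\volmeas{\obsfib}(\oball{r}{\latval_1})/\volmeas{\obsfib}(\oball{r}{\latval_2})$ tends to $0$ for every $\latval_2 \in \obsfib$, and the constant OM functional is exhaustive.

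The main obstacle is the low-regularity differential geometry: making the small--geodesic-ball volume expansion rigorous, and in particular establishing the first-order agreement of the intrinsic and ambient distance functions along a $C^2$ submanifold, requires care with the exponential map, the second fundamental form, and the precise sense in which the metric is ``$C^2$''; one could reasonably quote both facts from the differential-geometry appendix. The remaining steps — the algebra of ball-volume ratios and the essentially trivial exhaustiveness arguments — are routine.
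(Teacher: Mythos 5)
Your proposal follows essentially the same route as the paper: part~(i) is proved via the expansion $\volmeas{\latsp}(\oball{r}{\latval}) = V_d r^d + o(r^d)$ obtained from the exponential map (the paper's \cref{prop:small-riemannian-balls}), and part~(ii) via the sandwich of the ambient ball between intrinsic balls of radii $r$ and $r + o(r)$ on the submanifold $\obsfib$ (the paper's \cref{prop:ambient-riemannian-balls}), followed by the same ratio computation. The paper indeed quotes both ball-volume facts from its appendix, exactly as you suggest one could.

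One sub-argument in your write-up is flawed, however: your proof that $\obsfib$ is closed. You argue that if $\latval_n \in \obsfib$ converges to $\latval$, then $\latval$ eventually lies in the neighbourhood on which $\obsop$ is $C^2$ around $\latval_n$; but the regularity assumption only provides, for each $\latval_n$, \emph{some} neighbourhood of $C^2$-ness, and these neighbourhoods may shrink as $n \to \infty$, so nothing forces $\latval$ to lie in any of them, and nothing is known about $\obsop$ at or near the limit point $\latval$ (which is not assumed to be a regular point, nor is $\obsop$ assumed continuous there). So closedness of $\obsfib$ does not follow. That said, the conclusion you use it for --- that $\volmeas{\obsfib}(\oball{r}{\latval_1}) = 0$ for small $r$ when $\latval_1 \notin \obsfib$ --- is precisely what the paper's own proof asserts without further justification, so your attempt is, if anything, more explicit about a point the paper glosses over; it just does not succeed in establishing it.
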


\begin{proof}
  Note that by \cref{prop:small-riemannian-balls}, the Riemannian volume of Riemannian balls is to first order
  \begin{equation*}
      \volmeas{\latsp}(B_r^\latsp(x)) = V_dr^d + o(r^d)
  \end{equation*}
  as $r\downarrow 0$, where $V_d$ is the volume of the $d$-dimensional Euclidean ball. Thus we have for $x_1, x_2\in\latsp$,
  \begin{equation*}
    \frac{\mu(B_r(x_1))}{\mu(B_r(x_2))} = \frac{V_dr^d + o(r^d)}{V_dr^d + o(r^d)} = \frac{V_d + o(1)}{V_d + o(1)} \xrightarrow{r\downarrow 0} 1
  \end{equation*}
  which concludes the proof of \ref{item:omfctl-volmeas-latsp}.

  For \ref{item:omfctl-volmeas-obsfib}, if $x\not\in\obsfib$ then $\volmeas{\obsfib}(B_r^{\latsp}(x))=0$ for all $r$ small enough. If $x\in\obsfib$, we have by \cref{prop:ambient-riemannian-balls}
  \begin{equation*}
      \volmeas{\obsfib}(B_r^{\latsp}(x)) = V_{d-n}r^{d-n} + o(r^{d-n})
  \end{equation*}
  as $r\downarrow 0$, and the proof of \ref{item:omfctl-volmeas-obsfib} follows as for \ref{item:omfctl-volmeas-latsp}.
\end{proof}

Note that combining \cref{asm:continuous-density} and \cref{asm:prior-observation-C2-riemannian} amounts to \cref{asm:prior-observation} with additional regularity requirements. These results lead to a concrete characterization of OM functionals of disintegrations:
\begin{corollary}
  \label{cor:rcp-weak-modes-riemann}
  Under \cref{asm:prior-observation-C2-riemannian,asm:continuous-density}, additionally assume that $\obs \in \obssp$ is a regular value.
  Then $\priordisint$ admits an exhaustive OM functional $\omfctl[\priordisint] \colon \omdom[\priordisint] \to \R$ with $\omdom[\priordisint] \defeq \obsfib \cap \operatorname{supp}^\circ(\dv{\prior}{\volmeas{\latsp}})$ and
  \begin{equation*}
    \omfctl[\priordisint](\latval)
    \defeq
    - \log \dv{\prior}{\volmeas{\latsp}}{(\latval)} + \log \abs{\volmeas{\kernel{\jacobian{\obsop}(\latval)}^\perp}(\latval)[\nabla\obsop(\latval)]}.
  \end{equation*}
\end{corollary}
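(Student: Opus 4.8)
The plan is to derive \cref{cor:rcp-weak-modes-riemann} as an immediate consequence of \cref{cor:rcp-weak-modes}, once \cref{prop:omfctl-volmeas}\labelcref{item:omfctl-volmeas-obsfib} has been used to supply the one missing ingredient: an exhaustive OM functional for the Riemannian volume measure $\volmeas{\obsfib}$ on the fiber. No new estimates are needed --- the analytic work is all upstream, in the small-ball volume asymptotics $\volmeas{\obsfib}(B_r^{\latsp}(x)) = V_{d-n}r^{d-n} + o(r^{d-n})$ behind \cref{prop:omfctl-volmeas}, in the reweighting \cref{lem:omfctl-reweighted}, and in the density formula \cref{thm:disintegration-density} underlying \cref{cor:rcp-weak-modes}.

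First I would check that the hypotheses line up. \Cref{asm:prior-observation-C2-riemannian} asks for a $d$-dimensional $C^2$ Riemannian manifold $\latsp$ carrying, as a metric space, its Riemannian distance metric, together with $\prior$-a.e.\ $C^2$ regularity of $\obsop$; since $C^2 \subset C^1$ and every $C^2$ regular point is $C^1$ regular, this entails \cref{asm:prior-observation} with $k = 1$. Adjoining \cref{asm:continuous-density} then puts us exactly in the setting of \cref{cor:rcp-weak-modes}. Moreover, as $\obs$ is a regular value, the preimage theorem (\cref{thm:implicit-function}) makes $\obsfib$ a $(d-n)$-dimensional $C^2$ Riemannian submanifold of $\latsp$, so $\volmeas{\obsfib}$ is a well-defined, locally finite measure and \cref{prop:omfctl-volmeas}\labelcref{item:omfctl-volmeas-obsfib} is applicable.

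Next I would invoke \cref{prop:omfctl-volmeas}\labelcref{item:omfctl-volmeas-obsfib}: because the metric on $\latsp$ is precisely the Riemannian distance metric, the constant function $\omfctl[\volmeas{\obsfib}] \equiv 0$ on $\omdom[\volmeas{\obsfib}] = \obsfib$ is an exhaustive OM functional for $\volmeas{\obsfib}$; this is the only point at which the metric-compatibility clause of \cref{asm:prior-observation-C2-riemannian} is really used. Feeding this choice into \cref{cor:rcp-weak-modes} produces an exhaustive OM functional
\[
\omfctl[\priordisint](\latval)
= \omfctl[\volmeas{\obsfib}](\latval)
- \log\dv{\prior}{\volmeas{\latsp}}{(\latval)}
+ \log\abs{\volmeas{\kernel{\jacobian{\obsop}(\latval)}^\perp}(\latval)[\nabla\obsop(\latval)]}
\]
on $\omdom[\priordisint] = \omdom[\prior_{\obsfib}] \cap \operatorname{supp}^\circ(\dv{\prior}{\volmeas{\latsp}})$. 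Substituting $\omfctl[\volmeas{\obsfib}] \equiv 0$ deletes the first term, which is legitimate since translating an OM functional by an additive constant preserves both the defining limit and exhaustiveness (only differences of $\omfctl$-values and ratios of $\prior$-ball masses enter the definitions). For the domain, I would recall from the proof of \cref{cor:rcp-weak-modes} that $\omfctl[\prior_{\obsfib}]$ is built from $\omfctl[\volmeas{\obsfib}]$ by reweighting with the continuous, possibly-vanishing density $\dv{\prior}{\volmeas{\latsp}}$ via \cref{lem:omfctl-reweighted}, so $\omdom[\prior_{\obsfib}] = \omdom[\volmeas{\obsfib}] \cap \operatorname{supp}^\circ(\dv{\prior}{\volmeas{\latsp}}) = \obsfib \cap \operatorname{supp}^\circ(\dv{\prior}{\volmeas{\latsp}})$; intersecting once more with $\operatorname{supp}^\circ(\dv{\prior}{\volmeas{\latsp}})$ is idempotent, giving $\omdom[\priordisint] = \obsfib \cap \operatorname{supp}^\circ(\dv{\prior}{\volmeas{\latsp}})$, exactly the claimed domain and formula.

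I do not anticipate a genuine obstacle: the statement is a corollary assembled from results already in hand. The only step that deserves care is the bookkeeping of the OM-functional domains --- confirming that the reweighting step inside \cref{cor:rcp-weak-modes} and the domain $\obsfib$ of \cref{prop:omfctl-volmeas}\labelcref{item:omfctl-volmeas-obsfib} combine to the asserted set --- which is routine but easy to get wrong if one is sloppy about where $\dv{\prior}{\volmeas{\latsp}}$ vanishes.
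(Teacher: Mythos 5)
Your proposal is correct and follows exactly the paper's own (one-line) proof: combine \cref{cor:rcp-weak-modes} with the constant exhaustive OM functional for $\volmeas{\obsfib}$ supplied by \cref{prop:omfctl-volmeas}\labelcref{item:omfctl-volmeas-obsfib}. Your additional checks---that \cref{asm:prior-observation-C2-riemannian} implies \cref{asm:prior-observation} with $k=1$, that an additive constant in an OM functional is immaterial, and the domain bookkeeping via \cref{lem:omfctl-reweighted}---are all sound and merely make explicit what the paper leaves implicit.
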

\begin{proof}
  This follows directly from \cref{cor:rcp-weak-modes} and \cref{prop:omfctl-volmeas}\ref{item:omfctl-volmeas-obsfib}.
\end{proof}

By \cref{thm:omfctl-weak-modes}, this leads to a characterization of weak modes of the disintegration $\priordisint$ as the solutions of the constrained optimization problem
\begin{equation*}
  \begin{alignedat}{4}
    \min_{\latval \in \latsp} & \quad && - \log \dv{\prior}{\volmeas{\latsp}}{(\latval)} + \log \abs{\volmeas{\kernel{\jacobian{\obsop}(\latval)}^\perp}(\latval)[\nabla\obsop(\latval)]}, \\
    \textnormal{s.t.} &&& {\obsop}({\latval}) = {\obs}.
  \end{alignedat}
\end{equation*}
\Cref{thm:rcp-weak-modes-Rd} in the introduction is a simplified version of this result in $\R^d$.

This result shows that the maximizers of the disintegration densities in \cref{fig:mode_example:densities,fig:mode_example:densities} are indeed the weak modes of the disintegration when equipping the ambient space with the Euclidean metric.

\subsection{Restricted Modes Disagree with Modes of Disintegrations}
\label{sec:rcp-conditional-modes-vs-weak-modes}
\citet[Definition 3]{Chen2024GaussianConditionalMAP} define a variant of a mode on a fibre by restricting the search space for a strong mode from \cref{def:modes} to the fibre.
Below we generalize their definition to the setting of metric spaces and provide a weak version of the definition in order to study their relationship to weak and strong modes of disintegrations.
We will demonstrate that in general these modes differ from the modes of disintegrations and, for this reason, refer to these modes as ``restricted modes'' rather than conditional modes as in \cite{Chen2024GaussianConditionalMAP}.
\begin{definition}[Restricted Modes]
  \label{def:restricted-modes}
  Let $\obs \in \operatorname{supp}(\pushfw{\prior}{\obsop})$.
  A point $\latval^\star \in \obsfib \cap \operatorname{supp}(\prior) \subset \latsp$ is called a
  \begin{enumerate}[label=(\roman*)]
    \item \label{item:weak-restricted-mode}%
      \emph{weak $\obsfib$-restricted mode} of $\prior$ if
      \begin{equation*}
        \limsup_{r \downarrow 0} \frac{\prior(\oball{r}{\latval})}{\prior(\oball{r}{\latval^\star})} \le 1
      \end{equation*}
      for all $\latval \in \obsfib$, and a
    \item \label{item:strong-restricted-mode}%
      \emph{strong $\obsfib$-restricted mode} of $\prior$ if
      \begin{equation*}
        \limsup_{r \downarrow 0} \frac{\sup_{\latval \in \obsfib} \prior(\oball{r}{\latval})}{\prior(\oball{r}{\latval^\star})} \le 1.
      \end{equation*}
  \end{enumerate}
\end{definition}

As for regular weak and strong modes, we start by justifying the ``weak'' and ``strong'' attributes in the naming of restricted modes.

\begin{proposition}
  Every strong $\obsfib$-restricted mode $\latval \in \obsfib$ is also a weak $\obsfib$-restricted mode.
\end{proposition}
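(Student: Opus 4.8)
The plan is to mirror the argument used for the analogous statement about (unrestricted) strong and weak modes, which amounts to a one-line monotonicity observation. Let $\latval^\star \in \obsfib \cap \operatorname{supp}(\prior)$ be a strong $\obsfib$-restricted mode. Membership in $\obsfib \cap \operatorname{supp}(\prior)$ is part of the hypothesis, so the only thing left to verify is the defining inequality of a weak $\obsfib$-restricted mode, namely that $\limsup_{r\downarrow 0} \prior(\oball{r}{\latval})/\prior(\oball{r}{\latval^\star}) \le 1$ for every $\latval \in \obsfib$.

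First I would fix an arbitrary $\latval \in \obsfib$ and observe that, since $\latval$ is one of the competitors in the supremum appearing in \cref{def:restricted-modes}\ref{item:strong-restricted-mode}, we have the pointwise bound $\prior(\oball{r}{\latval}) \le \sup_{\latval' \in \obsfib} \prior(\oball{r}{\latval'})$ for every $r > 0$. Dividing by $\prior(\oball{r}{\latval^\star}) > 0$ (positivity holds for all small $r$ because $\latval^\star \in \operatorname{supp}(\prior)$) preserves the inequality, so
\begin{equation*}
  \frac{\prior(\oball{r}{\latval})}{\prior(\oball{r}{\latval^\star})}
  \le
  \frac{\sup_{\latval' \in \obsfib} \prior(\oball{r}{\latval'})}{\prior(\oball{r}{\latval^\star})}.
\end{equation*}
Taking $\limsup$ as $r \downarrow 0$ on both sides and using that $\latval^\star$ is a strong $\obsfib$-restricted mode gives $\limsup_{r\downarrow 0} \prior(\oball{r}{\latval})/\prior(\oball{r}{\latval^\star}) \le 1$.

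Since $\latval \in \obsfib$ was arbitrary, this establishes that $\latval^\star$ satisfies \cref{def:restricted-modes}\ref{item:weak-restricted-mode}, i.e.\ it is a weak $\obsfib$-restricted mode. I do not anticipate any genuine obstacle here: the argument is purely the monotonicity of the supremum together with $\limsup$ being order-preserving, exactly as in the proof that strong modes are weak modes; the only minor point worth a remark is that positivity of $\prior(\oball{r}{\latval^\star})$ for small $r$ is guaranteed by $\latval^\star \in \operatorname{supp}(\prior)$, so the ratios are well defined.
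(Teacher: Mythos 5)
Your proposal is correct and follows essentially the same route as the paper's proof: bound $\prior(\oball{r}{\latval})$ by $\sup_{\latval' \in \obsfib} \prior(\oball{r}{\latval'})$, divide by $\prior(\oball{r}{\latval^\star})$, and take the $\limsup$. The extra remark about positivity of the denominator for small $r$ (from $\latval^\star \in \operatorname{supp}(\prior)$) is a harmless addition that the paper leaves implicit.
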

\begin{proof}
  We adapt the proof of Lemma 3.9 in \citet{Ayanbayev2022GammaConvergence}.

  Let $\latval^\star \in \obsfib \cap \operatorname{supp}(\prior)$ be a strong $\obsfib$-restricted mode of $\prior$.
  For all $\latval \in \obsfib$, we have
  \begin{equation*}
    \limsup_{r \downarrow 0} \frac{\prior(\oball{r}{\latval})}{\prior(\oball{r}{\latval^\star})}
    \le \limsup_{r \downarrow 0} \frac{\sup_{\latval' \in \obsfib} \prior(\oball{r}{\latval'})}{\prior(\oball{r}{\latval^\star})}
    \le 1,
  \end{equation*}
  since $\latval^\star$ is a strong restricted mode of $\prior$.
  Hence, $\latval^\star$ is a weak restricted mode of $\prior$.
\end{proof}

Just as for weak modes, weak restricted modes can be characterized as the minimizers of exhaustive OM functionals.
However, for weak restricted modes, we can slightly weaken the notion of exhaustiveness:
\begin{definition}
  An Onsager-Machlup functional $\omfctl \colon \omdom \to \R$ for $\prior$ is called \emph{$\obsfib$-exhaustive} if
  \begin{equation*}
    \lim_{r \downarrow 0} \frac{\prior(\oball{r}{\latval_1})}{\prior(\oball{r}{\latval_2})} = 0
  \end{equation*}
  for all $\latval_2 \in \omdom \cap \obsfib$ and $\latval_1 \in \obsfib \setminus \omdom$.
\end{definition}
We next prove the natural result that weak $\obsfib$-restricted modes are minimisers of $\obsfib$-exhaustive OM functionals.

\begin{proposition}
  \label{prop:weak-restricted-modes-omfctl}
  Let $\omfctl \colon \omdom \to \R$ be an $\obsfib$-exhaustive Onsager-Machlup functional for $\prior$.
  Then $\latval^\star$ is a weak $\obsfib$-restricted mode of $\prior$ if and only if
  \begin{equation*}
    \latval^\star \in \argmin_{\latval \in \obsfib} \omfctl(\latval).
  \end{equation*}
\end{proposition}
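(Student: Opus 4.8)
The plan is to follow the proof of \cref{thm:omfctl-weak-modes} (i.e.\ \citealp[Proposition 4.1]{Ayanbayev2022GammaConvergence}) almost verbatim, restricting every comparison point to the fiber $\obsfib$. The point is that the definition of a weak $\obsfib$-restricted mode and the definition of $\obsfib$-exhaustiveness both only ever compare small balls centred at points of $\obsfib$, so $\obsfib$-exhaustiveness is exactly the strength needed to run the argument. Since $\omfctl$ is only defined on $\omdom$, the expression $\argmin_{\latval \in \obsfib} \omfctl(\latval)$ is to be read as $\argmin_{\latval \in \obsfib \cap \omdom} \omfctl(\latval)$, and I implicitly assume the natural nondegeneracy condition $\obsfib \cap \omdom \neq \emptyset$.

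For the ``if'' direction I would assume $\latval^\star \in \argmin_{\latval \in \obsfib \cap \omdom} \omfctl(\latval)$. Since $\omdom \subseteq \operatorname{supp}(\prior)$, the point $\latval^\star$ lies in $\obsfib \cap \operatorname{supp}(\prior)$ as required by \cref{def:restricted-modes}. I would then fix an arbitrary $\latval \in \obsfib$ and split into two cases: if $\latval \in \omdom$, the OM property gives $\lim_{r \downarrow 0} \prior(\oball{r}{\latval}) / \prior(\oball{r}{\latval^\star}) = \exp(\omfctl(\latval^\star) - \omfctl(\latval)) \le 1$ because $\latval^\star$ minimises $\omfctl$ over $\obsfib \cap \omdom$; if $\latval \in \obsfib \setminus \omdom$, then $\obsfib$-exhaustiveness (with $\latval_1 = \latval$ and $\latval_2 = \latval^\star$) gives that this same limit equals $0 \le 1$. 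Either way $\limsup_{r \downarrow 0} \prior(\oball{r}{\latval}) / \prior(\oball{r}{\latval^\star}) \le 1$, so $\latval^\star$ is a weak $\obsfib$-restricted mode.

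For the ``only if'' direction I would assume $\latval^\star$ is a weak $\obsfib$-restricted mode, so $\latval^\star \in \obsfib \cap \operatorname{supp}(\prior)$, and first show $\latval^\star \in \omdom$: arguing by contradiction, if $\latval^\star \notin \omdom$, I would pick $\latval_2 \in \obsfib \cap \omdom$ and apply $\obsfib$-exhaustiveness with $\latval_1 = \latval^\star$ to get $\prior(\oball{r}{\latval^\star}) / \prior(\oball{r}{\latval_2}) \to 0$, hence $\prior(\oball{r}{\latval_2}) / \prior(\oball{r}{\latval^\star}) \to \infty$, contradicting the weak $\obsfib$-restricted mode inequality at $\latval = \latval_2 \in \obsfib$; so $\latval^\star \in \obsfib \cap \omdom$. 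Then for any $\latval \in \obsfib \cap \omdom$ the OM property together with the weak $\obsfib$-restricted mode inequality gives $\exp(\omfctl(\latval^\star) - \omfctl(\latval)) = \lim_{r \downarrow 0} \prior(\oball{r}{\latval}) / \prior(\oball{r}{\latval^\star}) \le 1$, whence $\omfctl(\latval^\star) \le \omfctl(\latval)$, i.e.\ $\latval^\star \in \argmin_{\latval \in \obsfib \cap \omdom} \omfctl(\latval)$. No step is genuinely hard here; the only thing requiring care is the bookkeeping between $\omdom$ and $\obsfib$ — in particular checking that $\obsfib$-exhaustiveness is both necessary (to force $\latval^\star \in \omdom$) and sufficient (to dispose of points of $\obsfib \setminus \omdom$), and the convention on the minimisation domain when $\obsfib \cap \omdom$ might a priori be empty, which I would resolve by the standing nondegeneracy assumption above.
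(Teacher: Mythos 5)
Your proposal is correct and follows essentially the same route as the paper's proof: both adapt Proposition 4.1 of \citet{Ayanbayev2022GammaConvergence}, splitting comparison points according to membership in $\omdom$, using the OM property on $\obsfib \cap \omdom$ and $\obsfib$-exhaustiveness to handle $\obsfib \setminus \omdom$, with the same contradiction argument to rule out weak restricted modes outside $\omdom$. The only (harmless) difference is that you state the nondegeneracy assumption $\obsfib \cap \omdom \neq \emptyset$ explicitly, whereas the paper uses it implicitly when it picks $\latval' \in \obsfib \cap \omdom$.
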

\begin{proof}
  We adapt the proof of Proposition 4.1 in \citet{Ayanbayev2022GammaConvergence}.

  Let $\latval \in \obsfib \cap \omdom$.
  Then, by the definition of an $\obsfib$-exhaustive OM functional, we have
  \begin{equation*}
    \limsup_{r \downarrow 0} \frac{\prior(\oball{r}{\latval'})}{\prior(\oball{r}{\latval})}
    = \lim_{r \downarrow 0} \frac{\prior(\oball{r}{\latval'})}{\prior(\oball{r}{\latval})}
    = \begin{cases*}
      \exp(\omfctl(\latval) - \omfctl(\latval')) & if $\latval' \in \omdom$, \\
      0 & otherwise,
    \end{cases*}
  \end{equation*}
  for all $\latval' \in \obsfib$.
  Hence, $\latval$ is a weak $\obsfib$-restricted mode of $\prior$ if and only if $\exp(\omfctl(\latval) - \omfctl(\latval')) \le 1$ for all $\latval' \in \obsfib$, or, equivalently, $\omfctl(\latval) \le \omfctl(\latval')$ for all $\latval' \in \obsfib$.

  Now let $\latval \in \obsfib \setminus \omdom$.
  Since $\omfctl(\latval) = +\infty$, it remains to show that $\latval$ is not a weak $\obsfib$-restricted mode of $\prior$.
  Suppose for the sake of contradiction that this is the case.
  By definition, we then have $\latval \in \operatorname{supp}(\prior)$.
  Pick $\latval' \in \obsfib \cap \omdom \subset \operatorname{supp}(\prior)$.
  Then $\frac{\prior(\oball{r}{\latval'})}{\prior(\oball{r}{\latval})} > 0$ for every $r > 0$, and hence
  \begin{equation*}
    1
    \ge \limsup_{r \downarrow 0} \frac{\prior(\oball{r}{\latval'})}{\prior(\oball{r}{\latval})}
    = \left( \liminf_{r \downarrow 0} \frac{\prior(\oball{r}{\latval})}{\prior(\oball{r}{\latval'})} \right)\inv,
  \end{equation*}
  which is equivalent to
  \begin{equation*}
    \liminf_{r \downarrow 0} \frac{\prior(\oball{r}{\latval})}{\prior(\oball{r}{\latval'})} \ge 1.
  \end{equation*}
  This contradicts the $\obsfib$-exhaustiveness of the OM functional.
  We conclude that $\obsfib \setminus \omdom$ does not contain any weak $\obsfib$-restricted modes of $\prior$.
\end{proof}

Using some of the assumptions and results from the previous sections, we can build $\obsfib$-exhaustive OM functionals for $\prior$:
\begin{corollary}
  \label{cor:weak-restricted-modes-weak-modes-restriction}
  Under \cref{asm:prior-observation-C2-riemannian,asm:continuous-density}, if $\obs\in\obssp$ is a regular value, the function $\restrict{- \log \dv{\prior}{\volmeas{\latsp}}}{\obsfib}{}$ is
  \begin{enumerate}[label=(\roman*)]
    \item \label{item:restricted-om-prior}%
      an $\obsfib$-exhaustive OM functional for $\prior$, and,
    \item \label{item:om-restricted-measure}%
      an exhaustive OM functional for $\prior_{\obsfib}$.
  \end{enumerate}
\end{corollary}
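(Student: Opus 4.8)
The plan is to derive both claims from two ingredients already in place: \cref{prop:omfctl-volmeas}, which supplies constant, exhaustive OM functionals for the Riemannian volume measures $\volmeas{\latsp}$ and $\volmeas{\obsfib}$, and the reweighting lemma \cref{lem:omfctl-reweighted}, which transports an (exhaustive) OM functional along multiplication by a continuous density. Since \cref{asm:continuous-density} makes $\dv{\prior}{\volmeas{\latsp}}$ continuous, and since $\prior = \dv{\prior}{\volmeas{\latsp}} \cdot \volmeas{\latsp}$ while $\prior_{\obsfib} = \dv{\prior}{\volmeas{\latsp}} \cdot \volmeas{\obsfib}$ by \cref{def:restricted-measure}, both measures are continuous reweightings of a volume measure; in each case the natural (finiteness) domain of $\restrict{- \log \dv{\prior}{\volmeas{\latsp}}}{\obsfib}{}$ is $\omdom \defeq \obsfib \cap \operatorname{supp}^\circ(\dv{\prior}{\volmeas{\latsp}})$.

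For \ref{item:restricted-om-prior} I would work with the \emph{ambient} measure $\prior$. By \cref{prop:omfctl-volmeas}\ref{item:omfctl-volmeas-latsp}, the zero function on all of $\latsp$ is an exhaustive OM functional for $\volmeas{\latsp}$; reweighting it by $\dv{\prior}{\volmeas{\latsp}}$ through \cref{lem:omfctl-reweighted} shows that $- \log \dv{\prior}{\volmeas{\latsp}}$ is an exhaustive OM functional for $\prior$ on the domain $\operatorname{supp}^\circ(\dv{\prior}{\volmeas{\latsp}}) \subseteq \latsp$. Restricting this domain down to $\omdom$ trivially preserves the defining OM limit, since it only has to be checked for a subset of pairs, so all that remains is $\obsfib$-exhaustiveness. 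The pairs it requires are $\latval_1 \in \obsfib \setminus \omdom = \obsfib \setminus \operatorname{supp}^\circ(\dv{\prior}{\volmeas{\latsp}})$ together with $\latval_2 \in \omdom$; each such $\latval_1$ lies in $\latsp \setminus \operatorname{supp}^\circ(\dv{\prior}{\volmeas{\latsp}})$ and $\latval_2 \in \operatorname{supp}^\circ(\dv{\prior}{\volmeas{\latsp}})$, so $\lim_{r \downarrow 0} \prior(\oball{r}{\latval_1}) / \prior(\oball{r}{\latval_2}) = 0$ is exactly what the full exhaustiveness of the ambient functional already gives. Note that this part uses only the ambient small-ball asymptotics (\cref{prop:small-riemannian-balls}) underlying \cref{prop:omfctl-volmeas}\ref{item:omfctl-volmeas-latsp}: although $\latval_1, \latval_2 \in \obsfib$, the measure $\prior$ is evaluated on \emph{full-dimensional} balls $\oball{r}{\latval_i} \subseteq \latsp$, so no submanifold volume estimate is needed.

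For \ref{item:om-restricted-measure} the argument is the one already carried out in the proof of \cref{cor:rcp-weak-modes}. Since $\obs$ is a regular value, \cref{prop:omfctl-volmeas}\ref{item:omfctl-volmeas-obsfib} gives that the zero function on $\obsfib$ is an exhaustive OM functional for $\volmeas{\obsfib}$; reweighting by the continuous density $\dv{\prior}{\volmeas{\latsp}}$ via \cref{lem:omfctl-reweighted} then produces $- \log \dv{\prior}{\volmeas{\latsp}}$ as an exhaustive OM functional for $\prior_{\obsfib} = \dv{\prior}{\volmeas{\latsp}} \volmeas{\obsfib}$, with domain $\omdom$, which is precisely $\restrict{- \log \dv{\prior}{\volmeas{\latsp}}}{\obsfib}{}$. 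In contrast to \ref{item:restricted-om-prior}, here the relevant small-ball estimate is the \emph{submanifold} one (\cref{prop:ambient-riemannian-balls}).

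Neither half involves any real computation; the only point that needs care is the domain-restriction step in \ref{item:restricted-om-prior}. Restricting an exhaustive OM functional to $\obsfib$ does \emph{not} in general preserve full exhaustiveness — that would demand vanishing ratios against every point of the larger domain $\operatorname{supp}^\circ(\dv{\prior}{\volmeas{\latsp}})$ — but it does preserve $\obsfib$-exhaustiveness, because the only fibre points removed from the domain carry vanishing density and so are already covered. Recognising that $\obsfib$-exhaustiveness is precisely the weakening of exhaustiveness under which passing to the fibre is harmless is the heart of the matter.
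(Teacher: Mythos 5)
Your proof is correct and follows essentially the same route as the paper's: both parts are obtained from \cref{prop:omfctl-volmeas} via the reweighting lemma \cref{lem:omfctl-reweighted}, exactly as the paper does by reference to the argument in \cref{thm:rcp-weak-modes}. Your explicit verification that restricting the domain to $\obsfib$ preserves the OM property and downgrades full exhaustiveness to precisely $\obsfib$-exhaustiveness is a detail the paper leaves implicit, and it is handled correctly.
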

\begin{proof}
  \leavevmode
  Since $- \log \dv{\prior}{\volmeas{\latsp}}$ is continuous, \ref{item:restricted-om-prior} follows from \cref{prop:omfctl-volmeas}\ref{item:omfctl-volmeas-latsp}, and \ref{item:om-restricted-measure} follows from \cref{prop:omfctl-volmeas}\ref{item:omfctl-volmeas-obsfib}, by the same argument as in the proof of \cref{thm:rcp-weak-modes}.
\end{proof}

\Cref{cor:weak-restricted-modes-weak-modes-restriction} together with \cref{thm:omfctl-weak-modes,prop:weak-restricted-modes-omfctl} imply that, under \cref{asm:prior-observation-C2-riemannian,asm:continuous-density}, restricted modes correspond to modes of the Riemannian restricted measure, \emph{not} of the disintegration.
This explains our choice to rename this notion of mode from ``conditional mode'' to ``restricted mode''.

In \cref{fig:mode_example} and \cref{ex:gaussian-ellipse} we have seen that weak restricted modes and weak modes of the disintegration can disagree catastrophically, even in finite-dimensional Hilbert spaces.

\subsection{Failure Modes in Normed Spaces}
\label{sec:failure-modes}
Many of the results above apply when the metric on $\latsp$ coincides with the Riemannian distance function.
If the underlying space $\latsp$ is a vector space, then this is the case when $\latsp$ is equipped with an inner product that generates a norm and a metric on the space.
However, especially in probabilistic numerical methods, it is often more natural to work instead in a normed space whose norm is not generated by an inner product.
For instance, when approximating strong solutions to partial differential equations, it is common to choose $\latsp$ to be a Hölder space with the appropriate norm.

It is now natural to ask whether a change in norm affects the results on weak modes of disintegrations and restricted measures above.
Below we will show that this is indeed the case.
It is well-known that Onsanger-Machlup functionals have a strong dependence on the metric used to define them \citep[see e.g.,][Example B.4]{Ayanbayev2022GammaConvergence}.
However, perhaps counterintuitively, in our setting the differing behaviour of the weak modes under different norms only arises after disintegrating or restricting, leaving the prior's modes unaffected.

In this subsection $\latsp=\R^d$ is a normed space. Write $\lebesgue$ for the Lebesgue measure on $\R^d$.
The standard Euclidean inner product on $\R^d$ can be viewed as a Riemannian metric, with Riemannian volume form $\lebesgue$.
Thus, under \cref{asm:prior-observation}, we have a canonical restriction $\lebesgue_{\obsfib}$ of $\lambda$ to the fiber $\obsfib\subset \R^d$.
We will start by providing a version of \cref{prop:omfctl-volmeas} in normed spaces.
\begin{proposition}[Onsager-Machlup Functionals of the Lebesgue Measure in Normed Spaces]
  \label{prop:omfctl-volmeas-normed}
  \leavevmode
  \begin{enumerate}[label=(\roman*)]
    \item \label{item:om-lebesgue-vol} Any constant function $\omfctl[\lebesgue] \colon \R^d \to \R$ is an exhaustive OM functional for $\lebesgue$, and,
    \item \label{item:om-restricted-lebesgue-vol} under \cref{asm:prior-observation} with $k = 2$, if $\obs\in\obssp$ is a regular value
    then $\lebesgue_{\obsfib}$ admits an exhaustive OM functional 
    \[\omfctl[\lebesgue_{\obsfib}] \colon \omdom[\lebesgue_{\obsfib}] \to \R,\quad x \mapsto -\log V_{d-n}(x)
    \] 
    where $\omdom[\lebesgue_{\obsfib}] = \obsfib \subset \latsp$ and $V_{d-n}(x)$ is the $(d-n)$-dimensional volume of $B_1^\latsp(0)\cap \kernel{\jacobian{{\obsop}}({\latval})}$ in $\kernel{\jacobian{{\obsop}}({\latval})}\subset \R^d$.
  \end{enumerate}
\end{proposition}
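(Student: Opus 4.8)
The plan is to compute the small-ball asymptotics of $\lebesgue$ and of $\lebesgue_{\obsfib}$ directly, exactly as in the proof of \cref{prop:omfctl-volmeas}, but now being careful about the fact that the unit ball $B_1^\latsp(0)$ of a general norm is not Euclidean. For part \ref{item:om-lebesgue-vol}, the key observation is that $\lebesgue(\oball{r}{\latval}) = \lebesgue(\oball{r}{0}) = r^d \lebesgue(B_1^\latsp(0))$ for every center $\latval$ and every $r > 0$, by translation invariance and scaling of the Lebesgue measure. Hence the ratio $\lebesgue(\oball{r}{\latval_1})/\lebesgue(\oball{r}{\latval_2})$ is identically $1$, so any constant function is an OM functional; exhaustiveness is vacuous since $\omdom[\lebesgue] = \R^d$. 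Note that this is where the ``counterintuitive'' phenomenon alluded to in the surrounding text originates: at the level of the ambient measure the norm is invisible, because balls of different norms are all scaled copies of one fixed convex body.

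For part \ref{item:om-restricted-lebesgue-vol}, I would fix a regular value $\obs$ and a point $\latval \in \obsfib$, and work in a chart provided by the implicit function theorem (\cref{thm:implicit-function}), exactly as in the proof of \cref{thm:disintegration-density}: there is a neighbourhood $U$ of $\latval$ and a $C^2$ diffeomorphism identifying $\obsfib \cap U$ with an open subset of the affine subspace $\latval + \kernel{\jacobian{\obsop}(\latval)}$, with the diffeomorphism tangent to the inclusion at $\latval$ (so its derivative at $\latval$ is the identity on $\kernel{\jacobian{\obsop}(\latval)}$). The $C^2$ regularity is what lets us control the second-order error: the fiber agrees with the tangent plane $\latval + \kernel{\jacobian{\obsop}(\latval)}$ up to $O(r^2)$ inside $\oball{r}{\latval}$, and the Riemannian volume density $\dv{\volmeas{\obsfib}}{}$ of the restricted Euclidean metric is continuous, equal to $1$ relative to the flat Lebesgue measure of $\kernel{\jacobian{\obsop}(\latval)}$ at the base point $\latval$. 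Combining these, $\lebesgue_{\obsfib}(\oball{r}{\latval})$ equals the $(d-n)$-dimensional Lebesgue measure of the slice $\oball{r}{\latval} \cap (\latval + \kernel{\jacobian{\obsop}(\latval)})$ up to a factor $1 + o(1)$; and that slice is exactly $r \bigl(B_1^\latsp(0) \cap \kernel{\jacobian{\obsop}(\latval)}\bigr)$ translated to $\latval$, which has $(d-n)$-volume $r^{d-n} V_{d-n}(\latval)$. Therefore $\lebesgue_{\obsfib}(\oball{r}{\latval}) = V_{d-n}(\latval) r^{d-n} + o(r^{d-n})$ as $r \downarrow 0$, so for $\latval_1, \latval_2 \in \obsfib$ the ratio tends to $V_{d-n}(\latval_1)/V_{d-n}(\latval_2) = \exp\bigl(\omfctl[\lebesgue_{\obsfib}](\latval_2) - \omfctl[\lebesgue_{\obsfib}](\latval_1)\bigr)$ with $\omfctl[\lebesgue_{\obsfib}](\latval) = -\log V_{d-n}(\latval)$. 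Exhaustiveness is immediate: if $\latval_1 \notin \obsfib$ then, since $\obsfib$ is closed near $\latval_1$ (it is a submanifold, locally a zero set of a continuous map), $\oball{r}{\latval_1} \cap \obsfib = \emptyset$ for all small $r$, so $\lebesgue_{\obsfib}(\oball{r}{\latval_1}) = 0$ and the ratio is $0$; and $V_{d-n}$ is finite and positive, so $\omfctl[\lebesgue_{\obsfib}]$ is real-valued on all of $\omdom[\lebesgue_{\obsfib}] = \obsfib$.

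The main obstacle is the second-order volume estimate in the chart: one must show that the non-Euclidean shape of the ball does not interact with the curvature of the fiber or the nonlinearity of the chart in a way that changes the leading coefficient. The cleanest route is to pull the ball back through an affine approximation: write $\obsfib \cap U = \{\latval + v + \eta(v) : v \in \kernel{\jacobian{\obsop}(\latval)},\ \abs{v} \text{ small}\}$ with $\eta$ a $C^2$ map vanishing to second order at $0$ ($\eta(0) = 0$, $\mathrm{D}\eta(0) = 0$), which is exactly the graph form the implicit function theorem gives after composing with the orthogonal projection onto the tangent space. Then a point $\latval + v + \eta(v)$ lies in $\oball{r}{\latval}$ iff $\norm{v + \eta(v)} < r$, and since $\norm{\eta(v)} = O(\abs{v}^2) = O(r^2)$ on this set, the region of admissible $v$ is squeezed between $r(1 - Cr)(B_1^\latsp(0)\cap \kernel{\jacobian{\obsop}(\latval)})$ and $r(1 + Cr)(B_1^\latsp(0)\cap \kernel{\jacobian{\obsop}(\latval)})$ for some constant $C$ and all small $r$ — here I use that the norm is Lipschitz and that $B_1^\latsp(0)\cap \kernel{\jacobian{\obsop}(\latval)}$, being a bounded convex body with nonempty interior in $\kernel{\jacobian{\obsop}(\latval)}$, has the property that an $\eps$-perturbation of its boundary changes its volume by $O(\eps)$. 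Finally the Jacobian of $v \mapsto \latval + v + \eta(v)$ onto $\obsfib$, combined with the Riemannian volume density of $\obsfib$, is $1 + O(\abs{v}) = 1 + O(r)$, contributing only to the $o(r^{d-n})$ term. This yields $\lebesgue_{\obsfib}(\oball{r}{\latval}) = V_{d-n}(\latval) r^{d-n}(1 + O(r))$ and completes the argument.
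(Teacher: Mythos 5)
Your proposal is correct and follows essentially the same route as the paper: part \ref{item:om-lebesgue-vol} via translation invariance and scaling of $\lebesgue$, and part \ref{item:om-restricted-lebesgue-vol} via the small-ball asymptotic $\lebesgue_{\obsfib}(\oball{r}{\latval}) = V_{d-n}(\latval)\,r^{d-n} + o(r^{d-n})$, which the paper isolates as \cref{prop:ambient-normed-balls}. The only cosmetic difference is that you establish this asymptotic with a graph parametrization of the fiber over $\kernel{\jacobian{\obsop}(\latval)}$ from the implicit function theorem, whereas the paper uses the Riemannian exponential map of the submanifold; both arguments rest on the same second-order Taylor control that $C^2$ regularity provides.
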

\begin{proof}
    \ref{item:om-lebesgue-vol} is a consequence of the translation invariance of $\lambda$ and of the metric (induced by the norm) on $\R^d$.

    For \ref{item:om-restricted-lebesgue-vol}, if $x\not\in\obsfib$ then $\lebesgue_{\obsfib}(B_r^{\R^d}(x))=0$ for all $r$ small enough. If $x\in \obsfib$, we have by \cref{prop:ambient-normed-balls}
    \begin{equation*}
        \lebesgue_{\obsfib}(B_r^{\R^d
        }(x)) = V_{d-n}(x)r^{d-n}+o(r^{d-n})
    \end{equation*}
    as $r\downarrow 0$, and the proof of \ref{item:om-restricted-lebesgue-vol} follows as the proof of \cref{prop:omfctl-volmeas}.
\end{proof}
\Cref{prop:omfctl-volmeas-normed}\ref{item:om-lebesgue-vol} implies that the OM functional of $\prior$, and hence its modes, do not depend on the norm on $\R^d$. 
However, surprisingly, \cref{prop:omfctl-volmeas-normed}\ref{item:om-restricted-lebesgue-vol} implies that the OM functional and the modes of $\priordisint$ does depend on the norm. 
Intuitively, this is because the balls $B^\latsp_r(0)$ are not necessarily isotropic, and so the OM functional depends on the orientation of the fibre with respect to the balls.

Combining \cref{prop:omfctl-volmeas-normed} with \cref{cor:rcp-weak-modes}, we obtain the explicit characterization of the OM functional of the disintegration in normed spaces:
\begin{corollary}
  Let $\latsp = \R^d$ a normed space, and suppose \cref{asm:prior-observation}\ref{item:Y-observation} holds with $k = 2$, as well as \cref{asm:continuous-density}. If $\obs \in \obssp$ is a regular value, then $\priordisint$ admits an exhaustive OM functional $\omfctl[\priordisint] \colon \omdom[\priordisint] \to \R$ with $\omdom[\priordisint] \defeq \obsfib \cap \operatorname{supp}^\circ(\dv{\prior}{\lebesgue})$ and
  \begin{equation*}
    \omfctl[\priordisint](\latval)
    \defeq
    -\log V_{d-n}(x)- \log \dv{\prior}{\lebesgue}{(\latval)} + \log \abs{\det \jacobian{{\obsop}}({\latval}) \vert_{\kernel{\jacobian{{\obsop}}({\latval})}^\perp}},
  \end{equation*}
  where $V_{d-n}(0)$ is the $(d-n)$-dimensional volume of $B_1^\latsp(0)\cap \kernel{\jacobian{{\obsop}}({\latval})}$ in $\kernel{\jacobian{{\obsop}}({\latval})}\subset \R^d$.
\end{corollary}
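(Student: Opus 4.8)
The plan is to read this corollary off from \cref{cor:rcp-weak-modes} after feeding in the Onsager--Machlup functional of the reference measure on the fibre supplied by \cref{prop:omfctl-volmeas-normed}\ref{item:om-restricted-lebesgue-vol}. First I would check that the stated hypotheses entail the full \cref{asm:prior-observation} with $k = 2$: since $\latsp = \R^d$ is a $C^\infty$ manifold and \cref{asm:continuous-density} forces $\prior$ to be absolutely continuous with respect to a volume measure with continuous density, part \ref{item:X-prior} of \cref{asm:prior-observation} is automatic, while part \ref{item:Y-observation} is assumed outright; in particular \cref{asm:prior-observation} holds with $k = 1$, as \cref{cor:rcp-weak-modes} requires, and \cref{asm:continuous-density} is available there as well. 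As at the start of this subsection I would take the standard Euclidean inner product on $\R^d$ as the Riemannian metric underlying \cref{thm:disintegration-density} and \cref{cor:rcp-weak-modes}, so that $\volmeas{\latsp} = \lebesgue$ and the induced restriction to the fibre is $\volmeas{\obsfib} = \lebesgue_{\obsfib}$. The key bookkeeping point is that this Euclidean Riemannian structure governs only the reference measures and the orthogonal complement $\kernel{\jacobian{\obsop}(\latval)}^\perp$, and is entirely separate from the (possibly non-Euclidean) norm that defines the metric-space structure of $\latsp$; since all norms on $\R^d$ are equivalent, the norm topology still coincides with the manifold topology, so the general mode theory and \cref{cor:rcp-weak-modes} apply without modification.

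Next I would apply \cref{prop:omfctl-volmeas-normed}\ref{item:om-restricted-lebesgue-vol}, valid precisely under \cref{asm:prior-observation} with $k = 2$, to obtain an \emph{exhaustive} OM functional $\omfctl[\volmeas{\obsfib}] = \omfctl[\lebesgue_{\obsfib}] \colon \obsfib \to \R$, $\latval \mapsto -\log V_{d-n}(\latval)$ for the restricted Lebesgue measure. Substituting $\omfctl[\volmeas{\obsfib}](\latval) = -\log V_{d-n}(\latval)$ and $\dv{\prior}{\volmeas{\latsp}} = \dv{\prior}{\lebesgue}$ into \cref{cor:rcp-weak-modes} then yields an OM functional for $\priordisint$ on the domain $\omdom[\priordisint] = \obsfib \cap \operatorname{supp}^\circ(\dv{\prior}{\lebesgue})$, given by
\begin{equation*}
  \omfctl[\priordisint](\latval)
  = -\log V_{d-n}(\latval) - \log \dv{\prior}{\lebesgue}{(\latval)} + \log \abs{\volmeas{\kernel{\jacobian{\obsop}(\latval)}^\perp}(\latval)[\nabla\obsop(\latval)]},
\end{equation*}
with exhaustiveness inherited from that of $\omfctl[\volmeas{\obsfib}]$ through the final clause of \cref{cor:rcp-weak-modes}.

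Finally I would rewrite the volume-form factor in Euclidean terms: because the Riemannian metric here is the Euclidean one, $\abs{\volmeas{\kernel{\jacobian{\obsop}(\latval)}^\perp}(\latval)[\nabla\obsop(\latval)]}$ equals the Gram determinant $\abs{\det \jacobian{\obsop}(\latval)\vert_{\kernel{\jacobian{\obsop}(\latval)}^\perp}}$, exactly the simplification used to pass from \cref{thm:disintegration-density} to \cref{thm:disint-density-Rd}. Substituting this identity into the display above gives the claimed formula for $\omfctl[\priordisint]$. I expect no genuine obstacle beyond the care flagged in the first paragraph, namely verifying that the ambient norm enters \cref{cor:rcp-weak-modes} only through the OM functional provided by \cref{prop:omfctl-volmeas-normed}\ref{item:om-restricted-lebesgue-vol} and nowhere else, so that the Euclidean Riemannian structure may legitimately be used to compute the remaining terms.
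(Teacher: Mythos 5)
Your proposal is correct and follows exactly the route the paper intends: the corollary is obtained by feeding the exhaustive OM functional $\latval \mapsto -\log V_{d-n}(\latval)$ for $\lebesgue_{\obsfib}$ from \cref{prop:omfctl-volmeas-normed}\ref{item:om-restricted-lebesgue-vol} into \cref{cor:rcp-weak-modes} with the Euclidean Riemannian structure (so $\volmeas{\latsp}=\lebesgue$, $\volmeas{\obsfib}=\lebesgue_{\obsfib}$, and the volume-form factor reduces to the Gram determinant as in \cref{thm:disint-density-Rd}). Your additional bookkeeping — verifying \cref{asm:prior-observation}\ref{item:X-prior} from \cref{asm:continuous-density}, and noting that the ambient norm enters only through the metric-ball OM functional while the Euclidean metric serves purely as reference structure — is exactly the separation the paper relies on in \cref{rmk:disintegration-riemannian-structure}.
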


\begin{figure}
  \subcaptionbox{Riemannian Restricted Measure}[0.497\linewidth]{%
    \includegraphics[width=\linewidth]{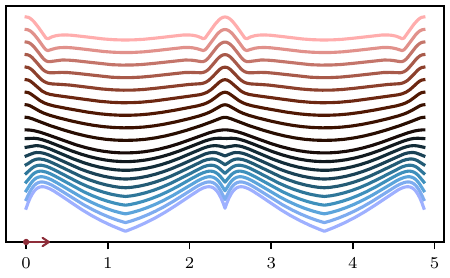}
  }
  \subcaptionbox{Disintegration}[0.497\linewidth]{%
    \includegraphics[width=\linewidth]{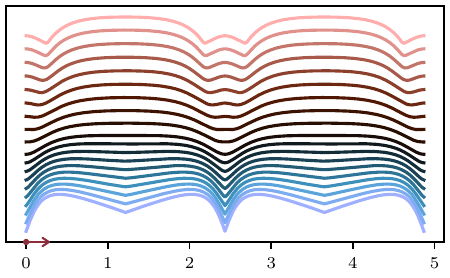}
  }%
  \caption{%
    Onsager-Machlup functionals of the restriction and disintegration in \cref{ex:gaussian-ellipse,fig:mode_example:densities} with respect to different $\ell_p$ norms.
    The bottom (blue) lines correspond to $p = 1$, the central (black) lines correspond to $p = 2$, and the top (salmon) lines correspond to $p = \infty$.
    The remaining values interpolate $\atan$-uniformly between these values. Each of these OM functionals can be shifted by an arbitrary constant, so we spread them vertically for visualization purposes.
    The change in norm leads to OM functionals with different minimizers, i.e., different weak modes of the distributions.
    For instance, for the disintegration, the $\ell_2$ OM functional has two local minima, which correspond to local maxima of the $\ell_\infty$ OM functional. The latter admits four other local minima.
  }
  \label{fig:gaussian-ellipse:lp-om-fctls}
\end{figure}

The counterintuitive behaviour of the OM functionals with respect to a change in norm is illustrated in \cref{fig:gaussian-ellipse:lp-om-fctls}.
Here varying the value of $p$ among $\ell_p$ norms on $\R^2$ leads to different locations and numbers of minimizers, and minima of certain OM functionals become local maxima in others.

\section{Discussion}
\label{sec:discussion}
This work has pointed out multiple troubling discrepancies between disintegrations and restrictions of measures, with a particular focus on their modes.
Whether restricted modes or disintegration modes are the object of interest is problem-specific.
The dividing line between the two approaches is whether the law of total probability is obeyed.
Consequently, the paradigm selected hinges on the relevance of said law in a given application.
If all potential outcomes $\obs$ of the predictive distribution $\predictive$ are equally valid measurements, the disintegration approach seems more natural.
For instance, this is the case in most of Bayesian statistics.

However for some downstream applications, only one ``observation'' is relevant or even semantically meaningful, and therefore a restriction rather than a disintegration may be preferred.
This is the case for the Laplace approximation under function-space priors proposed by \citet{Cinquin2024FSPLaplace}, and one could argue also for certain probabilistic numerical methods, particularly those based on solution of ODEs (e.g.\ in the formalism of \cite{Tronarp2019}) for which one specifies a residual, and it is not clear that setting said residual to any other value than 0 is meaningful.
In such cases there is no clear justification for enforcing the law of total probability, as marginalising over all possible observations in $\operatorname{supp}(\predictive)$ gives weight to these irrelevant observations.
This suggests that, in such cases, a potential generalisation of the restriction operation to more general settings (such as infinite-dimensional vector spaces) is more appropriate than disintegration.

\section*{Acknowledgments}
The authors thank Philipp Hennig and Bálint Mucsányi for helpful discussions and feedback on this work.
MP and NDC gratefully acknowledge financial support by the European Research Council through ERC StG Action 757275 / PANAMA and ERC CoG Action 101123955 / ANUBIS; the DFG Cluster of Excellence “Machine Learning - New Perspectives for Science”, EXC 2064/1, project number 390727645; the German Federal Ministry of Education and Research (BMBF) through the Tübingen AI Center (FKZ: 01IS18039A); the DFG SPP 2298 (Project HE 7114/5-1); and the Carl Zeiss Foundation (project "Certification and Foundations of Safe Machine Learning Systems in Healthcare"); as well as funds from the Ministry of Science, Research and Arts of the State of Baden-Württemberg. NDC acknowledges the support of the Fonds National de la Recherche, Luxembourg. The authors thank the International Max Planck Research School for Intelligent Systems (IMPRS-IS) for supporting MP and NDC.
JC is supported by EPSRC grant EP/Y001028/1.

\bibliography{main}

\begin{thebibliography}{24}
\providecommand{\natexlab}[1]{#1}
\providecommand{\url}[1]{\texttt{#1}}
\expandafter\ifx\csname urlstyle\endcsname\relax
  \providecommand{\doi}[1]{doi: #1}\else
  \providecommand{\doi}{doi: \begingroup \urlstyle{rm}\Url}\fi

\bibitem[Ayanbayev et~al.(2022)Ayanbayev, Klebanov, Lie, and Sullivan]{Ayanbayev2022GammaConvergence}
Birzhan Ayanbayev, Ilja Klebanov, Han~Cheng Lie, and T.~J. Sullivan.
\newblock {$\Gamma$}-convergence of {O}nsager-{M}achlup functionals. {P}art {I}: With applications to maximum a posteriori estimation in {B}ayesian inverse problems.
\newblock \emph{Inverse Problems}, 38\penalty0 (2), 2022.
\newblock \doi{10.1088/1361-6420/ac3f81}.

\bibitem[Bishop(2006)]{bishop_pattern_2006}
Christopher~M. Bishop.
\newblock \emph{Pattern recognition and machine learning}.
\newblock Information science and statistics. Springer, New York, 2006.
\newblock ISBN 978-0-387-31073-2.

\bibitem[Chang and Pollard(1997)]{chang_conditioning_1997}
J.~T. Chang and D.~Pollard.
\newblock Conditioning as disintegration.
\newblock \emph{Statistica Neerlandica}, 51\penalty0 (3):\penalty0 287--317, November 1997.
\newblock ISSN 0039-0402, 1467-9574.
\newblock \doi{10.1111/1467-9574.00056}.
\newblock URL \url{https://onlinelibrary.wiley.com/doi/10.1111/1467-9574.00056}.

\bibitem[Chen et~al.(2021)Chen, Hosseini, Owhadi, and Stuart]{chen_solving_2021}
Yifan Chen, Bamdad Hosseini, Houman Owhadi, and Andrew~M. Stuart.
\newblock Solving and learning nonlinear {PDEs} with {Gaussian} processes.
\newblock \emph{Journal of Computational Physics}, 447:\penalty0 110668, December 2021.
\newblock ISSN 0021-9991.
\newblock \doi{10.1016/j.jcp.2021.110668}.
\newblock URL \url{https://linkinghub.elsevier.com/retrieve/pii/S0021999121005635}.
\newblock Publisher: Elsevier BV.

\bibitem[Chen et~al.(2024)Chen, Hosseini, Owhadi, and Stuart]{Chen2024GaussianConditionalMAP}
Yifan Chen, Bamdad Hosseini, Houman Owhadi, and Andrew~M. Stuart.
\newblock {Gaussian} measures conditioned on nonlinear observations: Consistency, {MAP} estimators, and simulation, 2024.

\bibitem[Cinquin et~al.(2024)Cinquin, Pf\"ortner, Fortuin, Hennig, and Bamler]{Cinquin2024FSPLaplace}
Tristan Cinquin, Marvin Pf\"ortner, Vincent Fortuin, Philipp Hennig, and Robert Bamler.
\newblock {FSP-Laplace}: Function-space priors for the {Laplace} approximation in {Bayesian} deep learning.
\newblock In A.~Globerson, L.~Mackey, D.~Belgrave, A.~Fan, U.~Paquet, J.~Tomczak, and C.~Zhang, editors, \emph{Advances in Neural Information Processing Systems}, volume~37, pages 13897--13926. Curran Associates, Inc., 2024.
\newblock \doi{10.48550/arxiv.2407.13711}.

\bibitem[Clason et~al.(2019)Clason, Helin, Kretschmann, and Piiroinen]{clason_generalized_2019}
Christian Clason, Tapio Helin, Remo Kretschmann, and Petteri Piiroinen.
\newblock Generalized {Modes} in {Bayesian} {Inverse} {Problems}.
\newblock \emph{SIAM/ASA Journal on Uncertainty Quantification}, 7\penalty0 (2):\penalty0 652--684, January 2019.
\newblock ISSN 2166-2525.
\newblock \doi{10.1137/18m1191804}.
\newblock URL \url{https://epubs.siam.org/doi/10.1137/18M1191804}.
\newblock Publisher: Society for Industrial \& Applied Mathematics (SIAM).

\bibitem[Cockayne et~al.(2019)Cockayne, Oates, Sullivan, and Girolami]{cockayne_bayesian_2019}
Jon Cockayne, Chris~J. Oates, T.~J. Sullivan, and Mark Girolami.
\newblock Bayesian {Probabilistic} {Numerical} {Methods}.
\newblock \emph{SIAM Review}, 61\penalty0 (3):\penalty0 756--789, January 2019.
\newblock ISSN 0036-1445, 1095-7200.
\newblock \doi{10.1137/17M1139357}.
\newblock URL \url{https://epubs.siam.org/doi/10.1137/17M1139357}.

\bibitem[Dashti et~al.(2013)Dashti, Law, Stuart, and Voss]{Dashti2013MAPBayesianInverseProblems}
M.~Dashti, K.~J.~H. Law, A.~M. Stuart, and J.~Voss.
\newblock {MAP} estimators and their consistency in {Bayesian} nonparametric inverse problems.
\newblock \emph{Inverse Problems}, 29\penalty0 (9), 2013.
\newblock \doi{10.1088/0266-5611/29/9/095017}.

\bibitem[Dellacherie and Meyer(1978)]{dellacherie_probabilities_1978}
Claude Dellacherie and Paul~André Meyer.
\newblock \emph{Probabilities and potential}.
\newblock Number~29 in North-{Holland} mathematics studies. Hermann [u.a.], Paris, 1978.
\newblock ISBN 978-2-7056-5857-1 978-0-7204-0701-3.

\bibitem[Diaconis et~al.(2013)Diaconis, Holmes, and Shahshahani]{diaconis_sampling_2013}
Persi Diaconis, Susan Holmes, and Mehrdad Shahshahani.
\newblock Sampling from a {Manifold}.
\newblock In \emph{Advances in {Modern} {Statistical} {Theory} and {Applications}: {A} {Festschrift} in honor of {Morris} {L}. {Eaton}}, volume~10, pages 102--126. Institute of Mathematical Statistics, January 2013.
\newblock \doi{10.1214/12-IMSCOLL1006}.
\newblock URL \url{https://projecteuclid.org/ebooks/institute-of-mathematical-statistics-collections/Advances-in-Modern-Statistical-Theory-and-Applications--A-Festschrift/chapter/Sampling-from-a-Manifold/10.1214/12-IMSCOLL1006}.

\bibitem[Gallot et~al.(2004)Gallot, Hulin, and Lafontaine]{Gallot2004RiemannianGeometry}
Sylvestre Gallot, Dominique Hulin, and Jacques Lafontaine.
\newblock \emph{Riemannian {Geometry}}.
\newblock Universitext. Springer, Berlin, Heidelberg, 2004.
\newblock ISBN 978-3-540-20493-0 978-3-642-18855-8.
\newblock \doi{10.1007/978-3-642-18855-8}.
\newblock URL \url{http://link.springer.com/10.1007/978-3-642-18855-8}.

\bibitem[Helin and Burger(2015)]{Helin2015MaximumPosterioriProbability}
Tapio Helin and Martin Burger.
\newblock Maximum a posteriori probability estimates in infinite-dimensional {{Bayesian}} inverse problems.
\newblock \emph{Inverse Problems}, 31\penalty0 (8), 2015.
\newblock \doi{10.1088/0266-5611/31/8/085009}.

\bibitem[Klebanov et~al.(2025)Klebanov, Lambley, and Sullivan]{klebanov_classification_2025}
Ilja Klebanov, Hefin Lambley, and T.~J. Sullivan.
\newblock Classification of small-ball modes and maximum a posteriori estimators, March 2025.
\newblock URL \url{http://arxiv.org/abs/2306.16278}.
\newblock arXiv:2306.16278 [math].

\bibitem[Lambley(2023)]{Lambley2023StrongMAP}
Hefin Lambley.
\newblock Strong maximum a posteriori estimation in {B}anach spaces with {G}aussian priors.
\newblock \emph{Inverse Problems}, 39\penalty0 (12), 2023.
\newblock \doi{10.1088/1361-6420/ad07a4}.

\bibitem[Laurent et~al.(2024)Laurent, Aldea, and Franchi]{Laurent2024SymmetryAware}
Olivier Laurent, Emanuel Aldea, and Gianni Franchi.
\newblock A symmetry-aware exploration of {Bayesian} neural network posteriors.
\newblock In \emph{ICLR}, 2024.
\newblock URL \url{https://openreview.net/forum?id=FOSBQuXgAq}.

\bibitem[Lie and Sullivan(2018)]{Lie2018EquivalenceWeakStrong}
Han~Cheng Lie and T.~J. Sullivan.
\newblock Equivalence of weak and strong modes of measures on topological vector spaces.
\newblock \emph{Inverse Problems}, 34\penalty0 (11), 2018.
\newblock \doi{10.1088/1361-6420/aadef2}.

\bibitem[Mardia and Jupp(1999)]{Mardia1999}
Kanti~V. Mardia and Peter~E. Jupp.
\newblock \emph{Directional Statistics}.
\newblock Wiley, January 1999.
\newblock ISBN 9780470316979.
\newblock \doi{10.1002/9780470316979}.
\newblock URL \url{http://dx.doi.org/10.1002/9780470316979}.

\bibitem[Possobon and Rodrigues(2022)]{possobon_geometric_2022}
Renata Possobon and Christian~S. Rodrigues.
\newblock Geometric properties of disintegration of measures, February 2022.
\newblock URL \url{https://ui.adsabs.harvard.edu/abs/2022arXiv220204511P}.
\newblock ADS Bibcode: 2022arXiv220204511P.

\bibitem[Tjur(1975)]{tjur_constructive_1975}
Tue Tjur.
\newblock \emph{A {Constructive} {Definition} of {Conditional} {Distributions}}.
\newblock Institute of Mathematical Statistics, University of Copenhagen, 1975.
\newblock Google-Books-ID: CeP7XwAACAAJ.

\bibitem[Tjur(1980)]{tjur_probability_1980}
Tue Tjur.
\newblock \emph{Probability based on {Radon} measures}.
\newblock Wiley series in probability and mathematical statistics. J. Wiley, Chichester [Eng.]; New York, 1980.
\newblock ISBN 978-0-471-27824-5.

\bibitem[Tronarp et~al.(2019)Tronarp, Kersting, S\"{a}rkk\"{a}, and Hennig]{Tronarp2019}
Filip Tronarp, Hans Kersting, Simo S\"{a}rkk\"{a}, and Philipp Hennig.
\newblock Probabilistic solutions to ordinary differential equations as nonlinear {Bayesian} filtering: a new perspective.
\newblock \emph{Statistics and Computing}, 29\penalty0 (6):\penalty0 1297–1315, September 2019.
\newblock ISSN 1573-1375.
\newblock \doi{10.1007/s11222-019-09900-1}.
\newblock URL \url{http://dx.doi.org/10.1007/s11222-019-09900-1}.

\bibitem[Tronarp et~al.(2021)Tronarp, Särkkä, and Hennig]{Tronarp2023BayesianODEMAP}
Filip Tronarp, Simo Särkkä, and Philipp Hennig.
\newblock Bayesian {ODE} solvers: the maximum a posteriori estimate.
\newblock \emph{Statistics and Computing}, 31\penalty0 (3), May 2021.
\newblock ISSN 0960-3174, 1573-1375.
\newblock \doi{10.1007/s11222-021-09993-7}.
\newblock URL \url{https://link.springer.com/10.1007/s11222-021-09993-7}.
\newblock Publisher: Springer Science and Business Media LLC.

\bibitem[Wiese et~al.(2023)Wiese, Wimmer, Papamarkou, Bischl, G{\"u}nnemann, and R{\"u}gamer]{Wiese2023SymmetryBayesianDL}
Jonas~Gregor Wiese, Lisa Wimmer, Theodore Papamarkou, Bernd Bischl, Stephan G{\"u}nnemann, and David R{\"u}gamer.
\newblock Towards efficient mcmc sampling in {Bayesian} neural networks by exploiting symmetry.
\newblock In \emph{Machine Learning and Knowledge Discovery in Databases: Research Track}, pages 459--474, Cham, 2023. Springer Nature Switzerland.
\newblock ISBN 978-3-031-43412-9.

\end{thebibliography}

\newpage
\appendix

\section*{Appendix}

\section{Differential Geometry Background} \label{app:differential_geometry}
Here we include the necessary background in differential and Riemannian geometry. For a more complete exposition to this background, we refer to \cite{Gallot2004RiemannianGeometry}.

Let $\latsp$ be a $d$-dimensional $C^k$ manifold, with $k\geq 1$.
\begin{definition}[Vector Bundle]
    For $l\in \N_0$, a $l$\textit{-vector bundle} over $\latsp$ is a $C^k$-manifold $\mathbb E$ with a $C^k$ map $\pi\colon \mathbb E \to \latsp$ such that for each $\latval \in \latsp$, $\pi\inv(\{\latval\}) = E_\latval$ is a $l$-dimensional real vector space, and there exists an open neighbourhood $U$ of $\latval$ in $\latsp$ and a diffeomorphism $\varphi\colon \pi\inv(U) \to U\times \R^l$ such that the following diagram commutes
    \[\begin{tikzcd}
    	{\pi\inv(U)} & {U\times \R^l} \\
    	U
    	\arrow["\varphi", from=1-1, to=1-2]
    	\arrow["\pi"', from=1-1, to=2-1]
    	\arrow["{\pi_U}", from=1-2, to=2-1]
    \end{tikzcd}\]
    where $\pi_U\colon U\times \R^l\to U$ is the projection onto $U$, and $\varphi|_{E_x}\colon E_x \to \{x\}\times \R^l$ is a linear isomorphism.
\end{definition}
A vector bundle should be thought of as attaching a vector space at each point of the manifold. For instance the tangent bundle $\mathrm T\latsp$ and the cotangent bundle $\mathrm T^\star\latsp$ are vector bundles over $\latsp$.
\begin{definition}[Section]
    A \textit{section} $s$ of a vector bundle $\mathbb E$ over $\latsp$, is a $C^k$ map $s\colon X\to \mathbb E$ such that $\pi\circ s = \operatorname{id}_\latsp$. We write $\Gamma(\mathbb E)$ for the space of sections over $\mathbb E$.
\end{definition}
This means $s(x)$ is a `vector' at $x$. For example, given $f\colon \latsp \to \R$ we have $\jacobian{f} \in \Gamma(\mathrm T^\star\latsp)$.
\begin{definition}[Tensor]
    For $(p,q)\in \N^2_0$, a $(p,q)$\textit{-tensor} $\alpha(x)$ at $x\in \latsp$ is a multilinear map
    \begin{equation*}
        \alpha\colon \underbrace{\mathrm T^\star_x\latsp \times \dots \times \mathrm T^\star_x\latsp}_{p \text{ times}} \times \underbrace{\mathrm T_x\latsp\times \dots \times \mathrm T_x\latsp}_{q \text{ times}} \to \R,
    \end{equation*}
    i.e.~for each $1\leq j \leq p$ and $ w_1,\dots,  w_p,  w'_j \in \mathrm T_x^\star\latsp$, $ v_1,\dots,  v_q,\in \mathrm T_x\latsp$, $a_j, b_j \in \R$,
    \begin{equation*}
        \alpha(x)[ w_1,\dots, a_j w_j+b_j w'_j,\dots,  v_q] = a_j\alpha(x)[ w_1,\dots,  w_j, \dots,  v_q] + b_j\alpha(x)[ w_1,\dots,  w'_j, \dots,  v_q]
    \end{equation*}
    and similarly in the $ v_j$ arguments. Note that we can also view a $(p,q)$ tensor as an element $\alpha(x)\in \underbrace{\mathrm T_x\latsp \otimes\dots \otimes \mathrm T_x\latsp}_{p \text{ times}} \otimes \underbrace{\mathrm T_x^\star\latsp \otimes\dots \otimes \mathrm T_x^\star\latsp}_{q \text{ times}}=: (\mathrm T_x\latsp)^{\otimes p} \otimes (\mathrm T_x^\star\latsp)^{\otimes q}$.\\
    This provides a natural definition of a $(p,q)$\textit{-tensor field} $\alpha$ as an element $\alpha \in \Gamma((\mathrm T_x\latsp)^{\otimes p} \otimes (\mathrm T_x^\star\latsp)^{\otimes q})$. For $x\in\latsp$, $\alpha(x)$ is thought of as a multilinear map as above.
\end{definition}
\begin{definition}[Pullback]
    If $\obssp$ is another $C^k$ manifold and $f\colon \latsp\to \obssp$ is a $C^k$ map, we can \textit{pullback} a $(0,q)$-tensor $\alpha(f(x))$ at $f(x)\in\obssp$ to obtain a $(0,q)$-tensor $f^\star\alpha(x)$ at $x\in \latsp$ given by
    \begin{equation*}
        f^\star\alpha(x)[ v_1,\dots, v_q] := \alpha(f(x))[\jacobian{f}(x)[ v_1],\dots,\jacobian{f}(x)[ v_q]]
    \end{equation*}
    for $ v_1,\dots, v_q\in \mathrm T_x \latsp$.\\
    If $f$ is a injective then using the above construction we have a well-defined notion of a pullback of a $(0,q)$-tensor field $\alpha$ on $\obssp$ to a $(0,q)$-tensor field $f^\star\alpha$ on $\latsp$.
\end{definition}
\begin{definition}[Differential Form]\label{def:differential-form}
    For $q\in\N_0$, a $q$\textit{-form} is a $(0,q)$-tensor field which is antisymmetric, i.e.~for each $1\leq j < q$, $x\in\latsp$ and $ v_1,\dots, v_q \in \mathrm T_x \latsp$,
    \begin{equation*}
        \omega(x)[ v_1,\dots,  v_j,  v_{j+1},\dots,  v_q] = -\omega(x)[ v_1,\dots,  v_{j+1},  v_j,\dots,  v_q].
    \end{equation*}
\end{definition}
\begin{definition}[Volume Measure]\label{def:volume-measure}
    Any $d$-form $\omega$ defines a \emph{volume measure} on $\latsp$ which, by abuse of notation, we also denote by $\omega$. This is the unique measure such that, for any chart $\varphi \colon U \to V$, where $U\subset \latsp$, $V\subset \R^d$, $\varphi_\star (\omega|_U) \ll \lebesgue_V$ the Lebesgue measure on $V$, with
    \begin{equation}\label{eq:volume-measure}
        \dv{\varphi_\star (\omega|_U)}{\lebesgue_V} {( z)} = \abs{\varphi^{-1\star}\omega( z)[ e_1,\dots,  e_d]}
    \end{equation}
    where $ e_1,\dots,  e_d \in \R^d$ are the standard unit vectors.
\end{definition}
\begin{definition}[Orientability]
    $\latsp$ is \emph{orientable} if it admits a nowhere vanishing $d$-form.
\end{definition}
\begin{definition}[Regular Point{\slash}Value]\label{def:regular-point}
    Let $f\colon \latsp \to \obssp$ be a set-theoretic map between manifolds. $\latval\in\latsp$ is a \textit{regular point} of $f$ if $f$ is $C^k$ in a neighbourhood of $x$ and $\jacobian{f}(x)\colon T_x\latsp \to T_{f(x)}\obssp$ is surjective. $\obs\in\obssp$ is a \textit{regular value} of $f$ if each $x\in f\inv(\{y\})$ is a regular point.
\end{definition}

\begin{theorem}[Implicit Function{\slash}Preimage Theorem]\label{thm:implicit-function}
    Let $f\colon \latsp \to \obssp$ be a set-theoretic map between manifolds. Write $n :=\dim \obssp$. If $x\in\latsp$ is a regular point of $f$, then there exists neighbourhoods $U$ of $x$ in $\latsp$, $V$ of $y$ in $\obssp$, $W\subset\R^{d-n}$ and a $C^k$ diffeomorphism $\varphi\colon U\to V\times W$ such that $\pi_V \circ \varphi = f$, where $\pi_V\colon V\times W \to V$ is the projection

    Moreover, if $y\in \obssp$ is a regular value of $f$, then $f\inv(\{y\})$ is a $C^k$ submanifold of $\latsp$.
\end{theorem}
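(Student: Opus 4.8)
The plan is to reduce the statement to the classical (Euclidean) implicit function theorem by passing to charts, and then to deduce the submanifold claim by covering the fibre with the resulting normal-form charts.

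First I would pick a chart $\psi\colon U_0 \to \psi(U_0) \subseteq \R^d$ around $x$ with $\psi(x) = 0$, and a chart $\eta\colon V_0 \to \eta(V_0) \subseteq \R^n$ around $y := f(x)$ with $\eta(y) = 0$, shrinking $U_0$ so that $f(U_0) \subseteq V_0$ and $f$ is $C^k$ on $U_0$ (both possible since $x$ is a regular point). Set $F := \eta \circ f \circ \psi^{-1}$, a $C^k$ map with $F(0) = 0$; since $\psi$ and $\eta$ are diffeomorphisms, $\mathrm{D}F(0)\colon \R^d \to \R^n$ is surjective. Because $\mathrm{D}F(0)$ has rank $n$, after precomposing $\psi$ with a linear automorphism of $\R^d$ (still a chart) I may assume that the restriction of $\mathrm{D}F(0)$ to $\R^n \times \{0\} \subseteq \R^d$ is invertible. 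Writing $\R^d = \R^n \times \R^{d-n}$ and $z = (a,b)$, define $G(a,b) := (F(a,b),\, b)$. Then $G$ is $C^k$, $G(0) = 0$, and $\mathrm{D}G(0)$ is block-triangular with invertible diagonal blocks, hence invertible. The $C^k$ inverse function theorem yields open sets on which $G$ is a $C^k$ diffeomorphism onto an open neighbourhood of $0$ in $\R^n \times \R^{d-n}$; shrinking, I can take this image to be a product $V' \times W$ with $V' \subseteq \R^n$ and $W \subseteq \R^{d-n}$ open. Setting $U := \psi^{-1}(G^{-1}(V' \times W))$, $V := \eta^{-1}(V')$, and $\varphi := (\eta^{-1} \times \mathrm{id}_W) \circ G \circ \psi|_U$, one checks directly that $\varphi\colon U \to V \times W$ is a $C^k$ diffeomorphism and that $\pi_V \circ \varphi = \eta^{-1} \circ F \circ \psi = f|_U$, using that $\pi_{V'} \circ G = F$ on $\psi(U)$. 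This proves the first assertion.

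For the ``moreover'' part, suppose $y \in \obssp$ is a regular value; if $y \notin f(\latsp)$ the fibre is empty and there is nothing to prove, so assume otherwise. Then every $p \in f^{-1}(\{y\})$ is a regular point, so the first part furnishes a neighbourhood $U_p$ of $p$ and a $C^k$ diffeomorphism $\varphi_p\colon U_p \to V_p \times W_p$ with $\pi_{V_p} \circ \varphi_p = f$. Consequently $f^{-1}(\{y\}) \cap U_p = (\pi_{V_p} \circ \varphi_p)^{-1}(\{y\}) = \varphi_p^{-1}(\{y\} \times W_p)$, so $\varphi_p$ carries $f^{-1}(\{y\}) \cap U_p$ onto the slice $\{y\} \times W_p \cong W_p \subseteq \R^{d-n}$. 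The collection $\{(f^{-1}(\{y\}) \cap U_p,\ \varphi_p)\}_p$ is then a $C^k$ atlas for $f^{-1}(\{y\})$ (its transition maps are restrictions of the ambient $C^k$ transitions $\varphi_q \circ \varphi_p^{-1}$), exhibiting $f^{-1}(\{y\})$ as a $(d-n)$-dimensional $C^k$ embedded submanifold of $\latsp$.

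This is a standard result, so there is no genuine difficulty; the only care needed is bookkeeping. The points to be careful about are: arranging the image of $G$ under the inverse function theorem to be an honest product $V' \times W$ rather than a general open set (shrinking to a basic product neighbourhood); tracking the chart compositions so that the identity $\pi_V \circ \varphi = f$ holds exactly rather than merely up to a diffeomorphism; and invoking the $C^k$ version of the inverse function theorem, since $f$ is assumed only $C^k$ near $x$ and we must conclude $C^k$ (not $C^\infty$) regularity of $\varphi$ and of the submanifold structure.
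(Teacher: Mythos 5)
Your proof is correct. Note that the paper itself offers no proof of this statement: it is quoted in the differential-geometry appendix as standard background (deferring to the cited reference), so there is no argument of the authors' to compare against. Your route --- passing to charts, straightening $\mathrm{D}F(0)$ by a linear change of coordinates so that its restriction to $\R^n\times\{0\}$ is invertible, applying the $C^k$ inverse function theorem to $G(a,b)=(F(a,b),b)$, and then assembling slice charts along the fibre for the preimage statement --- is the standard textbook proof and all the steps check out; the only cosmetic slip is that you want to \emph{post}compose the chart $\psi$ with a linear automorphism of $\R^d$ (equivalently precompose $\psi^{-1}$), not precompose $\psi$.
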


\subsection{Riemannian Geometry}
\begin{definition}[Riemannian Metric]
    A \textit{Riemannian metric} $g$ is a $(0,2)$-tensor field which is positive definite, i.e.~for each $x\in \latsp$ and each $ v\in T_x\latsp\setminus \{0\}$, $g(x)[ v, v] >0$. It defines an inner product, and hence a norm $\|\cdot\|$, on each tangent space.
\end{definition}
\begin{remark}\label{rmk:riemann-existence}
    A Riemannian metric can be constructed on any $C^k$ manifold with $k\geq 1$. This can be done by choosing an inner product in each coordinate patch and smoothly ``sticking'' them together with a partition of unity.
\end{remark}
In the remainder of this appendix, $\latsp$ is a $d$-dimensional $C^k$ Riemannian manifold equipped with a Riemannian metric $g$.
\begin{definition}[Gradient]
    For a $C^k$ map $f\colon \latsp\to \R^n$, we define its \textit{gradient} $\nabla f \in \Gamma((T\latsp)^n)$ as, for each $1\leq j\leq n$ and $x\in\latsp$, $\nabla f (x)_j$ is the unique vector in $T_x\latsp$ such that
    \begin{equation*}
        g(\nabla f(x)_j,\cdot) = \pi_j\circ \jacobian{f}(x)
    \end{equation*}
    where $\pi_j\colon \R^n \to \R$ is the projection onto the $j$\textsuperscript{th} coordinate.
\end{definition}
\begin{definition}[Riemannian Volume Form]\label{def:riemann-volume}
    If $\latsp$ is orientable then it has, up-to-sign, a canonical volume form $\volmeas{\latsp}$ called the \textit{Riemannian volume form}. It is the unique, up-to-sign, $d$-form such that for each $x\in\latsp$ and orthonormal basis $v_1,\dots, v_d \in \mathrm T_x\latsp$,
    \begin{equation}\label{eq:riemann-volume-measure-orthonormal}
        \abs{\volmeas{\latsp}(x)[ v_1,\dots,  v_d]} = 1.
    \end{equation}
    Without assuming orientability, $\latsp$ still admits, up-to-sign, a canonical volume form in each coordinate patch. Thus the absolute value of the Riemannian volume form is still well-defined globally. Abusing notation, we write it $|\omega_\latsp|$. Moreover, the induced \emph{Riemannian volume measure} $\omega_\latsp$ (see \cref{def:volume-measure}) is also well-defined in general.

    If $\mathbb Y$ and $\mathbb Z$ are Riemannian submanifolds of $\latsp$, $x\in \mathbb Y\cap \mathbb Z$ and $(T_x\mathbb Y)^\perp = T_x\mathbb Z$, then for $ v_1,\dots, v_n \in T_x\mathbb Y$, $ v_{n+1},\dots, v_d \in T_x\mathbb \latsp$, we have
    \begin{equation}\label{eq:diff-form-product}
        \abs{\volmeas{\latsp}(x)[ v_1, \dots,  v_d]} = \abs{\volmeas{\mathbb Y}(x)[ v_1, \dots,  v_n]\cdot \volmeas{\mathbb Z}(x)[\pi_{\mathbb Z}( v_{n+1}),\dots, \pi_{\mathbb Z}( v_d)]}
    \end{equation}
    where $\pi_{\mathbb Z}\colon T_x\latsp \to T_x\mathbb Z$ is the orthogonal projection.

    Moreover, given a chart $ \varphi\colon U\to V$, where $U\subset \latsp$, $V\subset \R^d$, for each $x\in U$ choose an orthonormal basis $v_1(x),\dots,v_d(x)\in\mathrm T_x\latsp$. Write $M(x)$ for the matrix representation of $\jacobian{\varphi}(x)$ with respect to the basis $v_1(x),\dots,v_d(x)$ and the standard basis of $\R^d$, $e_1,\dots,e_d$. Then the Riemannian volume measure pushed through the chart satisfies by \cref{eq:volume-measure}
    \begin{equation}\label{eq:riemann-volume-measure-pushforward}
        \begin{aligned}
            \dv{ \varphi_\star \volmeas{U}}{\lebesgue_V} {(z)} &= \abs{\varphi^{-1\star}\volmeas{U}(z)[ e_1,\dots,  e_m]} \\
            &= \abs{\volmeas{U}( \varphi\inv(z))[\jacobian{\varphi\inv}{(z)}[ e_1],\dots, \jacobian{\varphi\inv}{(z)}[ e_m]]}\\
            &= \abs{\det(M(\varphi\inv(z))\inv)} \\
            &= \abs{\det(M(\varphi\inv(z))^\top)\inv} \\
            &= \abs{\volmeas{U}( \varphi\inv(z))[\nabla\varphi(\varphi\inv(z))]}\inv.
        \end{aligned}
    \end{equation}
\end{definition}
\begin{definition}[Riemannian Tangent Volume]\label{def:riemann-tangent-volume}
    For each $x\in\latsp$, $\mathrm T_x\latsp$ has a canonical \emph{Riemannian tangent volume measure} $\lambda$, given by identifying an orthonormal basis of $\mathrm T_x\latsp$ with the unit vectors of $\R^d$ to obtain an isomorphism $\mathrm T_x\latsp\cong \R^d$, and taking the Lebesgue measure on $\R^d$.
\end{definition}
\begin{theorem}[Distance Function]\label{def:riemann-distance}
    $\latsp$ is a metric space when equipped with the \textnormal {distance function}
    \begin{equation*}
        d^\latsp(x,x') = \inf\{L(\gamma):\gamma\colon [0,1]\to\latsp \text{ a $C^k$ curve s.t. } \gamma(0)=x, \gamma(1)=x'\}
    \end{equation*}
    for $x,x'\in \latsp$, where
    \begin{equation*}
        L(\gamma) = \int_0^1\|\gamma'(t)\|\dd t.
    \end{equation*}
\end{theorem}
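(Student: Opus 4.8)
The plan is to check the metric axioms for $d^\latsp$ directly, with essentially all the work concentrated in the separation axiom. Throughout I would assume $\latsp$ is connected so that $d^\latsp$ is finite-valued (otherwise one takes $d^\latsp = +\infty$ between distinct components and argues componentwise); finiteness then holds because a connected $C^k$ manifold is path connected, and a continuous path can be covered by finitely many charts and replaced by a piecewise-$C^k$ and then, after smoothing corners, a genuine $C^k$ curve. Non-negativity is immediate from $\|\gamma'(t)\| \ge 0$, and $d^\latsp(x,x) = 0$ follows by taking the constant curve at $x$, which is $C^\infty$ with $L = 0$.

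For symmetry, note that $\gamma \mapsto \gamma(1-\cdot)$ turns a $C^k$ curve from $x$ to $x'$ into one from $x'$ to $x$ of equal length, since $L$ is invariant under $C^k$ reparametrisation; hence $d^\latsp(x',x) \le d^\latsp(x,x')$, and equality by symmetry. For the triangle inequality, given near-optimal $C^k$ curves $\gamma_1$ from $x$ to $x'$ and $\gamma_2$ from $x'$ to $x''$, I would first reparametrise each so that it is constant near its endpoints (possible without changing its length, by a smooth monotone time-change). The concatenation of the reparametrised curves is then a genuine $C^k$ curve from $x$ to $x''$ of length $L(\gamma_1) + L(\gamma_2)$, and passing to the infimum gives $d^\latsp(x,x'') \le d^\latsp(x,x') + d^\latsp(x',x'')$. (The same smoothing shows the infimum is unchanged if one allows piecewise-$C^k$ competitors, which is also what underlies the finiteness claim above.)

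The substantive step is that $x \neq x'$ implies $d^\latsp(x,x') > 0$. I would pick a chart $\varphi \colon U \to V \subseteq \R^d$ around $x$ and an open $U' \ni x$ with $\overline{U'}$ compact and contained in $U$. In coordinates $g$ is a continuous, pointwise positive-definite matrix field, so its smallest eigenvalue attains a positive minimum $c^2 > 0$ on the compact set $\overline{U'}$; this yields the local lower bound $\|v\| \ge c\,\lvert \mathrm{D}\varphi(p)[v]\rvert$ for all $p \in U'$ and $v \in \mathrm{T}_p\latsp$, where $\lvert\cdot\rvert$ denotes the Euclidean norm on $\R^d$. Choose $r > 0$ with $\overline{B_r(\varphi(x))} \subseteq \varphi(U')$, small enough that $\varphi^{-1}(\overline{B_r(\varphi(x))})$ does not contain $x'$ (automatic if $x' \notin U$). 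For any $C^k$ curve $\gamma$ from $x$ to $x'$, continuity yields a first time $\tau \in (0,1]$ at which $\varphi(\gamma(\tau)) \in \partial B_r(\varphi(x))$, with $\gamma([0,\tau]) \subseteq \varphi^{-1}(\overline{B_r(\varphi(x))}) \subseteq U'$. Then, since $(\varphi\circ\gamma)'(t) = \mathrm{D}\varphi(\gamma(t))[\gamma'(t)]$ on $[0,\tau]$,
\[
  L(\gamma) \;\ge\; \int_0^\tau \|\gamma'(t)\|\,\dd t \;\ge\; c \int_0^\tau \bigl\lvert (\varphi\circ\gamma)'(t) \bigr\rvert \,\dd t \;\ge\; c\,\bigl\lvert \varphi(\gamma(\tau)) - \varphi(x) \bigr\rvert \;=\; c\,r ,
\]
where the last inequality uses that the Euclidean length of a curve dominates the Euclidean distance between its endpoints. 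Taking the infimum over $\gamma$ gives $d^\latsp(x,x') \ge c\,r > 0$.

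I expect the separation axiom to be the only genuine obstacle: it rests on the local comparison of the Riemannian norm with the Euclidean norm on a relatively compact coordinate patch, together with the topological observation that any curve joining $x$ to $x'$ must cross a small coordinate sphere around $x$. The other axioms are routine, the only mild technicality being that concatenations of curves must be smoothed to stay within the class of $C^k$ curves (and, relatedly, that a connected manifold admits $C^k$ paths between any two points, which is what makes $d^\latsp$ finite).
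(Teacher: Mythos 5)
Your proof is correct. The paper states this result without proof, as standard Riemannian-geometry background deferred to the cited reference (Gallot et al.), and your argument is precisely the classical one: the routine axioms plus the two genuinely delicate points, namely smoothing concatenations and reparametrisations to stay within the class of $C^k$ curves, and the separation axiom via a two-sided comparison of $\|\cdot\|$ with the Euclidean norm on a relatively compact coordinate patch combined with the fact that any competitor curve must cross a small coordinate sphere. Both are handled correctly, so there is nothing to add.
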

\begin{theorem}[Exponential Map]Assuming $k\geq 2$, for each $x\in\latsp$ there is a $C^{k-1}$ map $\exp_x^\latsp\colon V\to\latsp$ defined in a neighbourhood $V\subset \mathrm T_x\latsp$ of $0$ such that $\jacobian{\exp_x^\latsp}(0)\colon \mathrm T_x\latsp \to \mathrm T_x\latsp$ is the identity and $d^\latsp(x, \exp_x^\latsp v) = \|v\|$
for all $v\in V$. $\exp_x^\latsp$ is called the (Riemannian) \textnormal{exponential map} at $x\in \latsp$.
\end{theorem}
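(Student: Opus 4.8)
The plan is to construct $\exp_x^\latsp$ from the geodesic flow and then verify in turn its smoothness, its differential at $0$, and the distance identity. In a chart around $x$ a curve $\gamma$ is a \emph{geodesic} when
\[
  \ddot\gamma^k + \Gamma^k_{ij}(\gamma)\,\dot\gamma^i\dot\gamma^j = 0,
\]
where the Christoffel symbols $\Gamma^k_{ij}$ are built algebraically from the components of $g$, of $g^{-1}$, and of the first partial derivatives of $g$; since $g$ is $C^k$ they are $C^{k-1}$. Recasting this as a first-order system on $\mathrm T\latsp$ gives a $C^{k-1}$ vector field (the geodesic spray), so the Picard--Lindel\"of theorem, in the version with $C^{k-1}$ dependence on initial conditions, produces for each $v\in\mathrm T_x\latsp$ a unique maximal geodesic $\gamma_v$ with $\gamma_v(0)=x$ and $\dot\gamma_v(0)=v$, depending $C^{k-1}$-smoothly on $v$ (and, being a solution of a $C^{k-1}$ second-order ODE, $C^k$ as a curve, hence admissible in the infimum defining $d^\latsp$). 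Affine invariance of the equation together with uniqueness yields the homogeneity identity $\gamma_{sv}(t)=\gamma_v(st)$; in particular $\{\,v\in\mathrm T_x\latsp : \gamma_v \text{ exists on }[0,1]\,\}$ is open and star-shaped about $0$. I would take (a subset of) this set as $V$ and define $\exp_x^\latsp(v)\defeq\gamma_v(1)$, a $C^{k-1}$ map since $k\ge 2$.

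For the differential, fix $v$ and note that $t\mapsto \exp_x^\latsp(tv)=\gamma_{tv}(1)=\gamma_v(t)$ has velocity $v$ at $t=0$, so $\jacobian{\exp_x^\latsp}(0)[v]=v$, i.e.\ $\jacobian{\exp_x^\latsp}(0)=\id[\mathrm T_x\latsp]$. By the inverse function theorem $\exp_x^\latsp$ is then a $C^{k-1}$ diffeomorphism from some star-shaped neighbourhood of $0$ onto an open neighbourhood of $x$; shrinking $V$ to such a \emph{normal ball} preserves everything proved so far.

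Finally, for the distance identity (\cref{def:riemann-distance}): the radial geodesic $t\mapsto\exp_x^\latsp(tv)$, $t\in[0,1]$, has length $\|v\|$, so $d^\latsp(x,\exp_x^\latsp v)\le\|v\|$. For the reverse inequality I would prove the Gauss lemma — inside a normal ball, radial geodesics meet the geodesic spheres $\exp_x^\latsp(\{\|w\|=c\})$ $g$-orthogonally — and then run the classical comparison: expressing a competitor curve from $x$ to $\exp_x^\latsp v$ in geodesic polar coordinates, its length is bounded below by the total variation of the radial coordinate, which is at least $\|v\|$ if the curve stays in the normal ball and strictly larger if it ever leaves it (it must first cross a sphere of radius arbitrarily close to the normal radius, which we may assume exceeds $\|v\|$). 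Hence $d^\latsp(x,\exp_x^\latsp v)=\|v\|$ for $v\in V$. The main obstacle is exactly this last step: the Gauss lemma and the treatment of curves that leave the normal ball carry all the geometric content, whereas the construction of $\exp_x^\latsp$ and the identity $\jacobian{\exp_x^\latsp}(0)=\id[\mathrm T_x\latsp]$ are routine ODE theory — the only delicate bookkeeping there being the loss of one derivative in passing from $g$ to the Christoffel symbols.
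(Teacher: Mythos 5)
The paper states this result as standard background (in \cref{app:differential_geometry}, deferring to \citet{Gallot2004RiemannianGeometry}) and gives no proof of its own, so there is nothing internal to compare against. Your argument is the standard textbook construction --- geodesic spray, Picard--Lindel\"of with $C^{k-1}$ dependence on initial data, the homogeneity identity $\gamma_{sv}(t)=\gamma_v(st)$, the inverse function theorem, and the Gauss lemma for the lower bound on $d^\latsp$ --- and it is correct, including the derivative-count bookkeeping giving $C^{k-1}$ regularity and the care taken with competitor curves that leave the normal ball.
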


\section{Riemannian Volume of Small Balls}
In this appendix $\latsp$ is a $d$-dimensional $C^2$ Riemannian manifold equipped with a Riemannian metric $g$. Write the metric ball $B^\latsp_r(x) = \{x'\in\latsp : d(x,x')<r\}$. We study the asymptotics of the volume of balls $\omega_\latsp(B_r^\latsp(x))$ and $\omega_{\tilde \latsp}(B_r^\latsp(x))$ as $r\downarrow 0$, for $\tilde \latsp$ a submanifold of $\latsp$.
\begin{proposition}[Volume of Riemannian Balls]\label{prop:small-riemannian-balls}
    We have for $x\in\latsp$
    \begin{equation*}
        \omega_\latsp(B_r^\latsp(x)) = V_dr^d + o(r^d)
    \end{equation*}
    as $r\downarrow 0$, where $V_d$ is the volume of the $d$-dimensional unit Euclidean ball.
\end{proposition}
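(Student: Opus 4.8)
The plan is to transport the metric ball $B_r^\latsp(x)$ to a Euclidean ball in the tangent space via the Riemannian exponential map and then apply the change-of-variables formula \cref{eq:riemann-volume-measure-pushforward}. Since $\latsp$ is $C^2$, the exponential map $\exp_x^\latsp$ is $C^1$ on a neighbourhood $V$ of $0 \in \mathrm T_x\latsp$, satisfies $\jacobian{\exp_x^\latsp}(0) = \id$, and is a radial isometry: $d^\latsp(x, \exp_x^\latsp v) = \norm{v}$ for $v \in V$, where $\norm{\cdot}$ is the norm induced by $g(x)$. By the $C^1$ inverse function theorem, after shrinking $V$ we may assume $\exp_x^\latsp$ restricts to a $C^1$ diffeomorphism of an open ball $B_\rho(0) \subset \mathrm T_x\latsp$ onto an open neighbourhood $U$ of $x$ in $\latsp$.

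First I would show that $B_r^\latsp(x) = \exp_x^\latsp(B_r(0))$ for all sufficiently small $r$, writing $B_r(0) \defeq \set{v \in \mathrm T_x\latsp \where \norm{v} < r}$. The inclusion ``$\supseteq$'' is immediate from the radial isometry property. For ``$\subseteq$'', pick $r_0 \in (0, \rho)$ with $B_{r_0}^\latsp(x) \subseteq U$ (possible since $U$ is an open neighbourhood of $x$ and the manifold and metric topologies agree); then for $r < r_0$, any $y \in B_r^\latsp(x)$ lies in $U$, so $y = \exp_x^\latsp v$ for a unique $v \in B_\rho(0)$, and $\norm{v} = d^\latsp(x, y) < r$, hence $y \in \exp_x^\latsp(B_r(0))$.

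Next I would compute the volume using the chart $\varphi \defeq (\exp_x^\latsp)\inv \colon U \to B_\rho(0)$, identifying $\mathrm T_x\latsp \cong \R^d$ via an orthonormal basis as in \cref{def:riemann-tangent-volume}. By \cref{eq:riemann-volume-measure-pushforward}, $\pushfw{\volmeas{U}}{\varphi}$ is absolutely continuous with respect to the Riemannian tangent volume (Lebesgue) measure $\lebesgue$ on $\mathrm T_x\latsp$, with continuous density $J(v) = \abs{\volmeas{U}(\varphi\inv(v))[\nabla\varphi(\varphi\inv(v))]}\inv$. At $v = 0$ we have $\varphi\inv(0) = x$ and, since $\jacobian{\exp_x^\latsp}(0) = \id$ forces $\jacobian{\varphi}(x) = \id$, the frame $\nabla\varphi(x)$ is orthonormal; hence $\abs{\volmeas{U}(x)[\nabla\varphi(x)]} = 1$ by \cref{eq:riemann-volume-measure-orthonormal}, so $J(0) = 1$. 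Combining the two steps, for small $r$,
\begin{align*}
  \volmeas{\latsp}(B_r^\latsp(x))
  &= \volmeas{U}\pqty{\exp_x^\latsp(B_r(0))}
  = \int_{B_r(0)} J(v)\,\lebesgue(\dd{v}) \\
  &= \lebesgue(B_r(0)) + \int_{B_r(0)} \pqty{J(v) - 1}\,\lebesgue(\dd{v}),
\end{align*}
and the last term is bounded in absolute value by $\lebesgue(B_r(0)) \cdot \sup_{v \in B_r(0)} \abs{J(v) - 1}$, which is $o\pqty{\lebesgue(B_r(0))}$ by continuity of $J$ together with $J(0) = 1$. Since $\lebesgue(B_r(0)) = V_d r^d$ under the orthonormal identification, this yields $\volmeas{\latsp}(B_r^\latsp(x)) = V_d r^d + o(r^d)$.

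The main obstacle I expect is the clean identification $B_r^\latsp(x) = \exp_x^\latsp(B_r(0))$ for small $r$: it relies on using simultaneously that $\exp_x^\latsp$ is a local diffeomorphism at $0$ and that it is a radial isometry, together with the (implicitly assumed) compatibility of the manifold topology with the metric-space topology, so that small metric balls sit inside the chart domain. The remaining work is the elementary continuity estimate on the Jacobian density $J$, which is available because $\latsp$ is $C^2$ (so that $\exp_x^\latsp$, and hence $J$, is at least continuous).
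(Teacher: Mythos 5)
Your proof is correct and follows essentially the same route as the paper's: pull the ball back to the tangent space via $\exp_x^\latsp$, use $\jacobian{\exp_x^\latsp}(0) = \id$ together with \cref{eq:riemann-volume-measure-orthonormal} to see the Jacobian density equals $1$ at the origin, and conclude by continuity. You are in fact somewhat more careful than the paper, which leaves the identification $B_r^\latsp(x) = \exp_x^\latsp(B_r(0))$ for small $r$ implicit rather than deriving it from the radial isometry property and the local diffeomorphism as you do.
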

\begin{proof}
    $\exp_x^\latsp$ is a diffeomorphism from a neighbourhood $V\subset\mathrm T_x\latsp$ of $0$ to a neighbourhood $U\subset \latsp$ of $x$ in $\latsp$. So we can invert it on $U$, yielding a chart $\exp_x^{\latsp}|_V\inv\colon U \to V$. By \cref{eq:volume-measure}, for $r$ small enough such that $B_r^\latsp(0)\subset V$ and $v_1,\dots,v_d\in \mathrm T_x\latsp$ an orthonormal basis,
    \begin{align*}
        \omega_\latsp(B_r(x)) &= \int_{B_r(x)}\abs{\exp_x^{\latsp}|_V^\star\volmeas{\latsp}(v)[v_1,\dots,v_d]} \lambda(\dd v) \\
        &= \abs{\volmeas{\latsp}(x)[\jacobian{\exp_x^{\latsp}}(x) [v_1],\dots,\jacobian{\exp_x^{\latsp}}(x)[v_d]]}V_dr^d + o(r^d) \\
        &=\underbrace{\abs{\volmeas{\latsp}(x)[v_1,\dots,v_d]}}_{=1 \text{ (\cref{eq:riemann-volume-measure-orthonormal})}}V_dr^d + o(r^d) \\
        &= V_dr^d + o(r^d)
    \end{align*}
    as $r\downarrow 0$, where $\lambda$ is the Riemannian tangent volume measure (\cref{def:riemann-tangent-volume}).
\end{proof}
\begin{proposition}[Volume of Ambient Riemannian Balls]\label{prop:ambient-riemannian-balls}
    Let $\tilde {\mathbb X}$ be a $\tilde d$-dimensional $C^2$ Riemannian submanifold of $\latsp$. Then for $x\in \tilde {\mathbb X}$,
    \begin{equation*}
        \omega_{\tilde {\mathbb X}}(B_r^\latsp(x)) = V_{\tilde d}r^{\tilde d} + o(r^{\tilde d})
    \end{equation*}
    as $r\downarrow 0$, where $V_{\tilde d}$ is the volume of the $\tilde d$-dimensional unit Euclidean ball.
\end{proposition}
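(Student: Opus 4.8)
The plan is to deduce the statement from the already-proved asymptotics for \emph{intrinsic} Riemannian balls, \cref{prop:small-riemannian-balls}, applied to $\tilde{\mathbb X}$ itself — which, equipped with the metric induced from $\latsp$, is a $\tilde d$-dimensional $C^2$ Riemannian manifold — once the discrepancy between the ambient ball $B_r^\latsp(x)$ and the intrinsic ball $B_r^{\tilde{\mathbb X}}(x) \defeq \set{x' \in \tilde{\mathbb X} \where d^{\tilde{\mathbb X}}(x,x') < r}$ is controlled. First I would observe that since $\volmeas{\tilde{\mathbb X}}$ is supported on $\tilde{\mathbb X}$, we have $\volmeas{\tilde{\mathbb X}}(B_r^\latsp(x)) = \volmeas{\tilde{\mathbb X}}(B_r^\latsp(x) \cap \tilde{\mathbb X})$, an open subset of $\tilde{\mathbb X}$, so everything can be phrased in terms of the intrinsic measure.

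The key geometric input is a two-sided comparison of distances near $x$: on the one hand every $C^k$ curve in $\tilde{\mathbb X}$ has the same length measured intrinsically or in $\latsp$, so $d^\latsp \le d^{\tilde{\mathbb X}}$ on $\tilde{\mathbb X}$; on the other hand $d^{\tilde{\mathbb X}}(x,x') = (1 + o(1))\,d^\latsp(x,x')$ as $x' \to x$ within $\tilde{\mathbb X}$. I would establish the second estimate by either of two routes. Route (i): for a $C^2$ embedded submanifold the nearest-point projection $P$ onto $\tilde{\mathbb X}$ is well-defined and $C^1$ on a tubular neighbourhood of $x$ in $\latsp$, with $\jacobian{P}$ at points of $\tilde{\mathbb X}$ equal to the orthogonal projection onto the tangent space (hence of operator norm $\le 1$); by continuity $\norm{\jacobian{P}} \le 1 + \eps$ on a small enough neighbourhood, and projecting a minimizing $\latsp$-geodesic from $x$ to $x'$ (which stays in that neighbourhood for $x'$ close to $x$) produces a curve in $\tilde{\mathbb X}$ of length $\le (1+\eps)\,d^\latsp(x,x')$. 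Route (ii): in $\latsp$-normal coordinates at $x$, a Taylor expansion of the $\tilde{\mathbb X}$-geodesic $t \mapsto \exp_x^{\tilde{\mathbb X}}(tv)$, whose $\latsp$-acceleration is the second fundamental form (continuous, by the $C^2$ assumption), shows its coordinates equal $tv + O(t^2\norm{v}^2)$, whence $d^\latsp(x, \exp_x^{\tilde{\mathbb X}}(v)) = \norm{v} + O(\norm{v}^2) = d^{\tilde{\mathbb X}}(x, \exp_x^{\tilde{\mathbb X}}(v))(1 + o(1))$.

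Granting this comparison, fix $\eps > 0$ and pick a neighbourhood $\Omega$ of $x$ on which $d^{\tilde{\mathbb X}}(x,\cdot) \le (1+\eps)\,d^\latsp(x,\cdot)$ holds on $\tilde{\mathbb X}$. For all $r$ small enough that $B_r^\latsp(x) \subseteq \Omega$, this yields the sandwich
\[
  B_r^{\tilde{\mathbb X}}(x) \subseteq B_r^\latsp(x) \cap \tilde{\mathbb X} \subseteq B_{(1+\eps)r}^{\tilde{\mathbb X}}(x),
\]
the left inclusion being immediate from $d^\latsp \le d^{\tilde{\mathbb X}}$. Applying \cref{prop:small-riemannian-balls} to $\tilde{\mathbb X}$ gives $\volmeas{\tilde{\mathbb X}}(B_\rho^{\tilde{\mathbb X}}(x)) = V_{\tilde d}\rho^{\tilde d} + o(\rho^{\tilde d})$, and monotonicity of $\volmeas{\tilde{\mathbb X}}$ then bounds $\volmeas{\tilde{\mathbb X}}(B_r^\latsp(x))$ between $V_{\tilde d}r^{\tilde d} + o(r^{\tilde d})$ and $V_{\tilde d}(1+\eps)^{\tilde d}r^{\tilde d} + o(r^{\tilde d})$; dividing by $r^{\tilde d}$, letting $r \downarrow 0$, and then letting $\eps \downarrow 0$ finishes the proof. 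I expect the only real obstacle to be the distance-comparison lemma — showing the intrinsic and ambient distances from $x$ agree to first order — which is exactly where the $C^2$ regularity of the submanifold enters (via boundedness of the second fundamental form, or equivalently via $C^1$ regularity of the nearest-point projection); everything after that is a routine squeeze against \cref{prop:small-riemannian-balls}.
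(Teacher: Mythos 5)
Your proposal is correct and follows essentially the same route as the paper: both establish the sandwich $B_r^{\tilde{\mathbb X}}(x) \subseteq B_r^\latsp(x) \cap \tilde{\mathbb X} \subseteq B_{r(1+o(1))}^{\tilde{\mathbb X}}(x)$ by showing the intrinsic and ambient distances from $x$ agree to first order, and then squeeze against \cref{prop:small-riemannian-balls} applied to $\tilde{\mathbb X}$. The only cosmetic difference is in how the distance comparison is obtained — the paper Taylor-expands the composition of $\exp_x^{\tilde{\mathbb X}}|_V\inv$ with $\exp_x^{\latsp}$, which is a close cousin of your route (ii).
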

\begin{proof}
  $\exp_x^{\tilde {\mathbb X}}$ is a diffeomorphism from a neighbourhood $V\subset\mathrm T_x\tilde {\mathbb X}$ of $0$ to a neighbourhood $U\subset \tilde {\mathbb X}$ of $x$ in $\tilde {\mathbb X}$. So we can invert it on $U$.

  For $r>0$, it is clear that $B_r^{\tilde {\mathbb X}}(x) \subset B_r^\latsp(x)$. Now the Riemannian distance function $d^{\tilde {\mathbb X}}(x,\cdot)$ to $x$ on $\tilde {\mathbb X}$ is given for $x'\in U$ by
  \begin{equation*}
      d^{\tilde {\mathbb X}}(x,x') = \norm{\exp_x^{\tilde {\mathbb X}}|_{V}\inv(x')}.
  \end{equation*}
  Since the derivative of the exponential map at the origin of the tangent space is the identity, we have by Taylor's theorem, for $\exp_x^{\latsp}( v)\in U$
  \begin{align*}
      d^{\tilde {\mathbb X}}(x, \exp_x^{\latsp}( v)) = \norm{\exp_x^{\tilde {\mathbb X}}|_{V}\inv(\exp_x^{\latsp}(v))} = \norm{v + o(\|v\|)}= \|v\| + o(\|v\|)
  \end{align*}
    as $ v\to  0$. So $B_r^\latsp(x) \subset B_{r+o(r)}^{\tilde {\mathbb X}}(x)$ as $r\downarrow 0$. Now by \cref{prop:small-riemannian-balls}
  \begin{equation*}
      \volmeas{\tilde {\mathbb X}}(B_r^{\tilde {\mathbb X}}(x)) = V_{\tilde d}r^{\tilde d} + o(r^{\tilde d})
  \end{equation*}
  as $r\downarrow 0$. Hence
  \begin{align*}
      V_{\tilde d}r^{\tilde d} + o(r^{\tilde d}) &= \volmeas{\tilde {\mathbb X}}(B_r^{\tilde {\mathbb X}}(x)) \\
      &\leq\volmeas{\tilde {\mathbb X}}(B_r^\latsp(x)) \\
      &\leq \volmeas{\tilde {\mathbb X}}(B_{r+o(r)}^{\tilde {\mathbb X}}(x)) \\
      &= V_{\tilde d}(r+o(r))^{\tilde d} + o(r^{\tilde d}) \\
      &= V_{\tilde d}r^{\tilde d} + o(r^{\tilde d})
  \end{align*}
  as $r\downarrow 0$, i.e.
  \begin{equation*}
      \volmeas{\tilde {\mathbb X}}(B_r^\latsp(x)) = V_{\tilde d}r^{\tilde d} + o(r^{\tilde d})
  \end{equation*}
  as $r\downarrow 0$.
\end{proof}
The next proposition is of a similar flavour to \cref{prop:ambient-riemannian-balls}, but the ambient space is a normed space.
\begin{proposition}[Volume of Ambient Normed Balls]\label{prop:ambient-normed-balls}
    Let $\R^d$ be a normed space and $\tilde {\mathbb X}\subset
    \R^d$ a $\tilde d$-dimensional embedded $C^2$ manifold equipped with some Riemannian metric. Then for $x\in \tilde {\mathbb X}$,
    \begin{equation*}
        \omega_{\tilde {\mathbb X}}(B_r^{\R^d}(x)) = V_{\tilde d}(x)r^{\tilde d} + o(r^{\tilde d})
    \end{equation*}
    as $r\downarrow 0$ where $V_{\tilde d}(x)$ is the Riemannian tangent volume of $B_1^{\R^d}(0)\cap \mathrm T_x\tilde {\mathbb X}$, which in general depends on $x$, where we view $\mathrm T_x\tilde {\mathbb X}$ as a linear subspace of $\R^d$.
\end{proposition}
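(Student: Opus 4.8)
The plan is to reduce to a single $C^2$ local parametrisation of $\tilde{\mathbb X}$ near $x$ and then run a rescaling argument, in the spirit of the proof of \cref{prop:ambient-riemannian-balls} but with the ambient exponential map---which no longer exists, the ambient geometry being a norm---replaced by a Taylor expansion. Fix $x\in\tilde{\mathbb X}$ and choose a $C^2$ embedding $\psi\colon W\to\tilde{\mathbb X}$ with $W\subset\R^{\tilde d}$ an open neighbourhood of $0$, $\psi(0)=x$, and $D\psi(0)\colon\R^{\tilde d}\to\mathrm T_x\tilde{\mathbb X}$ a linear isometry for the Euclidean inner product on $\R^{\tilde d}$ and the Riemannian inner product $g(x)$ on $\mathrm T_x\tilde{\mathbb X}$. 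Since $\tilde{\mathbb X}$ is embedded, $B_r^{\R^d}(x)\cap\tilde{\mathbb X}\subset\psi(W)$ for all sufficiently small $r$, so only this chart contributes. Viewing $\psi\inv$ as a chart, \cref{def:volume-measure} together with the orthonormal normalisation \cref{eq:riemann-volume-measure-orthonormal} at $z=0$ produces a continuous density $J\colon W\to\R_{>0}$ with $J(0)=1$ such that
\begin{equation*}
    \omega_{\tilde{\mathbb X}}\pqty{B_r^{\R^d}(x)} = \int_{A_r} J(z)\,\lebesgue(\dd{z}), \qquad A_r\defeq\set{z\in W \where \psi(z)\in B_r^{\R^d}(x)}.
\end{equation*}

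Next I would analyse $A_r$ by Taylor's theorem. As $\psi\in C^2$, after shrinking $W$ there is $C>0$ with $\norm{\psi(z)-x-D\psi(0)z}\le C\abs{z}^2$ for $z\in W$ (all norms on $\R^d$ being equivalent, so the quadratic remainder estimate transfers to the given norm). Substituting $z=ru$ shows that $ru\in A_r$ iff $\norm{D\psi(0)u+R_r(u)}<1$, where $R_r(u)\defeq r\inv\pqty{\psi(ru)-x-D\psi(0)(ru)}$ satisfies $\norm{R_r(u)}\le Cr\abs{u}^2$. Using injectivity of $D\psi(0)$ and norm equivalence to obtain $\norm{D\psi(0)u}\ge c\abs{u}$ for some $c>0$, one checks that, after a further shrinking of $W$, the rescaled sets $r\inv A_r$ all lie in a fixed ball $B_\rho^{\R^d}(0)$ for all small $r$. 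On that compact set $R_r\to0$ uniformly, so $\mathbbm{1}_{r\inv A_r}\to\mathbbm{1}_{A_0}$ pointwise off the $\lebesgue$-null set $\set{u\where\norm{D\psi(0)u}=1}$, where
\begin{equation*}
    A_0 = \set{u \where \norm{D\psi(0)u}<1} = D\psi(0)\inv\pqty{B_1^{\R^d}(0)\cap\mathrm T_x\tilde{\mathbb X}}.
\end{equation*}
By dominated convergence $r^{-\tilde d}\lebesgue(A_r)=\lebesgue(r\inv A_r)\to\lebesgue(A_0)$, and since $D\psi(0)$ is an isometry onto $(\mathrm T_x\tilde{\mathbb X},g(x))$, $\lebesgue(A_0)$ equals the Riemannian tangent volume $V_{\tilde d}(x)$ of $B_1^{\R^d}(0)\cap\mathrm T_x\tilde{\mathbb X}$ (see \cref{def:riemann-tangent-volume}).

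To finish, note that $A_r$ sits in a ball of radius $O(r)$ about $0$ and hence shrinks to $\set{0}$; as $J$ is continuous with $J(0)=1$ this gives $\sup_{A_r}\abs{J-1}\to0$, so
\begin{equation*}
    \omega_{\tilde{\mathbb X}}\pqty{B_r^{\R^d}(x)} = (1+o(1))\,\lebesgue(A_r) = V_{\tilde d}(x)\,r^{\tilde d} + o(r^{\tilde d}) \qquad \text{as } r\downarrow0,
\end{equation*}
which is the claim; the $x$-dependence of $V_{\tilde d}(x)$ is visible from the formula for $A_0$, since $\mathrm T_x\tilde{\mathbb X}$ moves with $x$ while $B_1^{\R^d}(0)$ is generally anisotropic. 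I expect the main obstacle to be the measure-theoretic step $\lebesgue(r\inv A_r)\to\lebesgue(A_0)$: one must simultaneously confine the rescaled regions to one fixed compact set, uniformly in small $r$, and show that their symmetric difference with $A_0$ is eventually $\lebesgue$-negligible, which hinges on the boundary of the convex body $B_1^{\R^d}(0)\cap\mathrm T_x\tilde{\mathbb X}$ being $\lebesgue$-null inside $\mathrm T_x\tilde{\mathbb X}$. The remaining pieces---computing $J$ in the chart, the $C^2$ Taylor bound, and the final continuity-of-$J$ estimate---are routine.
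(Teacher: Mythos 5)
Your proof is correct, and it rests on the same core idea as the paper's: linearise a local $C^2$ parametrisation of $\tilde{\mathbb X}$ at $x$ via Taylor's theorem, so that the trace of the normed ball $B_r^{\R^d}(x)$ on $\tilde{\mathbb X}$ is asymptotically the tangent slice $B_r^{\R^d}(0)\cap\mathrm T_x\tilde{\mathbb X}$, whose volume scales like $V_{\tilde d}(x)r^{\tilde d}$. The execution differs in two ways. First, the paper uses the Riemannian exponential map $\exp_x^{\tilde{\mathbb X}}$ as the chart, whereas you use an arbitrary $C^2$ chart whose differential at the origin is a $g(x)$-isometry; this is a mild simplification, since the intrinsic geometry of $\tilde{\mathbb X}$ then enters only through the volume density $J$ and its normalisation $J(0)=1$. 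Second, where the paper sandwiches $B_r^{\R^d}(x)\cap\tilde{\mathbb X}$ between images of tangent-space normed balls of radii $r\pm o(r)$ and appeals to monotonicity plus the exact scaling of the tangent slice (as in \cref{prop:ambient-riemannian-balls}), you rescale by $r$ and apply dominated convergence to the indicators of $r\inv A_r$; the price is that you must check that the boundary of $B_1^{\R^d}(0)\cap\mathrm T_x\tilde{\mathbb X}$ is Lebesgue-null in $\mathrm T_x\tilde{\mathbb X}$, which holds because it is the boundary of a bounded open convex set, so the obstacle you flag is not a gap. One small point in your favour: you justify explicitly, via embeddedness, that $B_r^{\R^d}(x)\cap\tilde{\mathbb X}$ is eventually contained in the chart domain, a step the paper's proof leaves implicit.
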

\begin{proof}
    We write $\|\cdot\|_{\R^d}$ for the ambient norm in $\R^d\supset \mathrm T_x\tilde {\mathbb X}$. $\exp_x^{\tilde {\mathbb X}}$ is a diffeomorphism from a neighbourhood $V\subset\mathrm T_x\tilde {\mathbb X}$ of $0$ to a neighbourhood $U\subset \tilde {\mathbb X}$ of $x$ in $\tilde {\mathbb X}$. So we can invert it on $U$. Then for $x'\in B_r^{\R^d}(0)\cap U \subset \tilde {\mathbb X}$ we have by Taylor's theorem
    \begin{equation*}
        \norm{\exp_x^{\tilde {\mathbb X}}|_{V}\inv(x')}_{\R^d} = \norm{x'-x + o(\|x'-x\|_{\R^d})}_{\R^d} = \|x'-x\|_{\R^d} + o(\|x'-x\|_{\R^d})
    \end{equation*}
    as $x'\to x$. Hence
    \begin{equation*}
        \exp_x^{\tilde {\mathbb X}}(B_{r-o(r)}^{\R^d}(0)\cap \mathrm T_x\tilde {\mathbb X}) \subset B_r^{\R^d}(x) \cap \tilde {\mathbb X} \subset \exp^{\tilde {\mathbb X}}_x(B_{r+o(r)}^{\R^d}(0)\cap \mathrm T_x\tilde {\mathbb X})
    \end{equation*}
    as $r\downarrow 0$. So similarly to the proof of \cref{prop:small-riemannian-balls}, $B_r^{\R^d}(x) \cap \tilde {\mathbb X} \subset \exp_x^{\tilde {\mathbb X}}(B_{r+o(r)}^{\R^d}(0)\cap \mathrm T_x\tilde {\mathbb X})$ implies
    \begin{align*}
        \omega_{\tilde {\mathbb X}}(B_r^{\R^d}(x)) &\leq \int_{B_{r+o(r)}^{\R^d}(0)\cap \mathrm T_x\tilde {\mathbb X}}\abs{\exp_x^{\latsp}|_V^\star\volmeas{\latsp}(v)[v_1,\dots,v_d]} \lambda(\dd v) \\
        &= V_{\tilde d}(x)(r + o(r))^{\tilde d} + o(r^{\tilde d}) \\
        &= V_{\tilde d}(x)r^{\tilde d} + o(r^{\tilde d})
    \end{align*}
    as $r\downarrow 0$. Similarly the inclusion $\exp^{\tilde {\mathbb X}}_x(B_{r-o(r)}^{\R^d}(0)\cap \mathrm T_x\tilde {\mathbb X}) \subset B_r^{\R^d}(x) \cap \tilde {\mathbb X}$ yields the reverse inequality
    \begin{equation*}
        \omega_{\tilde {\mathbb X}}(B_r^{\R^d}(x)) \geq  V_{\tilde d}(x)r^{\tilde d} + o(r^{\tilde d})
    \end{equation*}
    as $r\downarrow 0$, which concludes the proof.
\end{proof}

\section{Modes Lemmas}
\begin{lemma}[{\citealp[Lemma B.1(a)]{Ayanbayev2022GammaConvergence}}]
  \label{lem:exhaustive-om}
  An OM functional $\omfctl \colon \omdom \to \R$ is exhaustive if and only if there is $\latval_2 \in \omdom$ such that
  \begin{equation*}
    \lim_{r \downarrow 0} \frac{\prior(\oball{r}{\latval_1})}{\prior(\oball{r}{\latval_2})} = 0
  \end{equation*}
  for all $\latval_1 \in \latsp \setminus \omdom$.
\end{lemma}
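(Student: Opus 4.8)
The plan is to prove both implications straight from the definitions, with essentially all the content in one multiplicative factorisation.

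First I would dispatch the ``only if'' direction, which is immediate: by definition an Onsager--Machlup functional has non-empty domain $\omdom$, so any $\latval_2 \in \omdom$ works, and exhaustiveness of $\omfctl$ says precisely that $\lim_{r \downarrow 0} \prior(\oball{r}{\latval_1}) / \prior(\oball{r}{\latval_2}) = 0$ for every $\latval_1 \in \latsp \setminus \omdom$, in particular for this fixed $\latval_2$.

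For the ``if'' direction I would fix a witness $\latval_2 \in \omdom$ for which the limit condition holds, take an arbitrary $\latval_2' \in \omdom$ and an arbitrary $\latval_1 \in \latsp \setminus \omdom$, and note that since $\latval_2, \latval_2' \in \omdom \subseteq \operatorname{supp}(\prior)$ we have $\prior(\oball{r}{\latval_2}) > 0$ and $\prior(\oball{r}{\latval_2'}) > 0$ for all $r > 0$, so the factorisation
\begin{equation*}
  \frac{\prior(\oball{r}{\latval_1})}{\prior(\oball{r}{\latval_2'})}
  = \frac{\prior(\oball{r}{\latval_1})}{\prior(\oball{r}{\latval_2})} \cdot \frac{\prior(\oball{r}{\latval_2})}{\prior(\oball{r}{\latval_2'})}
\end{equation*}
is legitimate for every $r > 0$. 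Letting $r \downarrow 0$, the first factor tends to $0$ by the choice of $\latval_2$, while the second tends to the finite constant $\exp(\omfctl(\latval_2') - \omfctl(\latval_2))$ by the defining property of the OM functional applied to the pair $\latval_2, \latval_2' \in \omdom$. Hence the product tends to $0$, and since $\latval_2' \in \omdom$ and $\latval_1 \in \latsp \setminus \omdom$ were arbitrary, this is exactly exhaustiveness of $\omfctl$.

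I do not anticipate a genuine obstacle here; the only point requiring a moment of care is justifying that the denominators are (eventually) positive so that the ratios and their factorisation make sense, which is handled by the inclusion $\omdom \subseteq \operatorname{supp}(\prior)$ built into the definition of an OM functional.
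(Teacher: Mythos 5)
Your proof is correct; the paper itself gives no proof of this lemma, deferring to the cited reference \citep[Lemma B.1(a)]{Ayanbayev2022GammaConvergence}, and your argument --- the ``only if'' direction being immediate from non-emptiness of $\omdom$, and the ``if'' direction via the multiplicative factorisation through the witness $\latval_2$, with positivity of the denominators guaranteed by $\omdom \subseteq \operatorname{supp}(\prior)$ --- is exactly the standard one used there. Nothing is missing.
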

\begin{lemma}
  \label{lem:omfctl-reweighted}
  Let $\latsp$ be locally compact and let $f \colon \latsp \to \R_{\ge 0}$ be continuous.
  Define the measure $\nu(\dd{x}) \defeq f(x) \prior(\dd{x})$.
  If $\omfctl \colon \omdom \to \R$ is an OM functional for $\prior$ such that $\omdom$ and $\operatorname{supp}^\circ(f) \defeq \set{\latval \in \latsp \where f(\latval) > 0}$ have a non-empty intersection, then $$\omfctl[\nu] \colon \omdom[\nu] \to \R, \latval \mapsto \omfctl(\latval) - \log f(\latval)$$ with $\omdom[\nu] \defeq \omdom \cap \operatorname{supp}^\circ(f)$ is an OM functional for $\nu$.
  If $\omfctl$ is exhaustive, then $\omfctl[\nu]$ is exhaustive as well.
\end{lemma}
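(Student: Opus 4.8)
The plan is to reduce everything to the elementary observation that, for a continuous nonnegative $f$, the weighted measure $\nu = f\prior$ is asymptotically $f(x)$ times $\prior$ on shrinking balls around any $x$. The first step is to show that for every $x \in \operatorname{supp}(\prior)$ one has
\[
  \lim_{r \downarrow 0} \frac{\nu(\oball{r}{x})}{\prior(\oball{r}{x})} = f(x),
\]
and that $\nu$ is locally finite. Indeed, since $\prior$ is locally finite and $x \in \operatorname{supp}(\prior)$, we have $\prior(\oball{r}{x}) \in (0,\infty)$ for all sufficiently small $r$; from $\inf_{\oball{r}{x}} f \le \nu(\oball{r}{x}) / \prior(\oball{r}{x}) \le \sup_{\oball{r}{x}} f$ and continuity of $f$ (both bounds tend to $f(x)$) the limit follows, and the upper bound also gives local finiteness of $\nu$. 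Since $f > 0$ on $\operatorname{supp}^\circ(f)$, this also shows that $\omdom[\nu] = \omdom \cap \operatorname{supp}^\circ(f) \subseteq \operatorname{supp}(\prior)$ is contained in $\operatorname{supp}(\nu)$, so $\omfctl[\nu]$ has an admissible domain.

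For the OM property, take $x_1, x_2 \in \omdom[\nu]$. Both lie in $\operatorname{supp}(\prior)$, so for small $r$ every ball below has positive finite $\prior$-mass, and I would factor
\[
  \frac{\nu(\oball{r}{x_1})}{\nu(\oball{r}{x_2})}
  = \frac{\nu(\oball{r}{x_1})/\prior(\oball{r}{x_1})}{\nu(\oball{r}{x_2})/\prior(\oball{r}{x_2})} \cdot \frac{\prior(\oball{r}{x_1})}{\prior(\oball{r}{x_2})}.
\]
By the first step the left factor tends to $f(x_1)/f(x_2)$, and since $\omfctl$ is an OM functional for $\prior$ the right factor tends to $\exp(\omfctl(x_2) - \omfctl(x_1))$. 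Hence the product tends to $\exp\big((\omfctl(x_2) - \log f(x_2)) - (\omfctl(x_1) - \log f(x_1))\big) = \exp(\omfctl[\nu](x_2) - \omfctl[\nu](x_1))$, which is exactly the defining identity of an OM functional for $\nu$.

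For exhaustiveness, assume $\omfctl$ is exhaustive and fix $x_2 \in \omdom[\nu]$ and $x_1 \in \latsp \setminus \omdom[\nu]$; note that $\latsp \setminus \omdom[\nu] = (\latsp \setminus \omdom) \cup \set{x \in \latsp \where f(x) = 0}$ by De Morgan and $f \ge 0$. If $x_1 \notin \operatorname{supp}(\prior)$ then $\prior(\oball{r}{x_1}) = 0$, hence $\nu(\oball{r}{x_1}) = 0$, for all small $r$, so the ratio is eventually $0$. Otherwise $\prior(\oball{r}{x_1}) > 0$ for small $r$ and I would again use the factorization from the previous paragraph. If $x_1 \notin \omdom$, then $\prior(\oball{r}{x_1})/\prior(\oball{r}{x_2}) \to 0$ by exhaustiveness of $\omfctl$, while $\nu(\oball{r}{x_1})/\prior(\oball{r}{x_1})$ stays bounded as $r \downarrow 0$ (it is at most $\sup_{\oball{\delta}{x_1}} f < \infty$ for a fixed small $\delta$, by continuity of $f$, or by local compactness) and $\nu(\oball{r}{x_2})/\prior(\oball{r}{x_2}) \to f(x_2) > 0$; so the product tends to $0$. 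If instead $x_1 \in \omdom$ with $f(x_1) = 0$, then $\prior(\oball{r}{x_1})/\prior(\oball{r}{x_2})$ converges to the finite value $\exp(\omfctl(x_2) - \omfctl(x_1))$, $\nu(\oball{r}{x_1})/\prior(\oball{r}{x_1}) \to f(x_1) = 0$, and $\nu(\oball{r}{x_2})/\prior(\oball{r}{x_2}) \to f(x_2) > 0$, so again the product tends to $0$. This exhausts $\latsp \setminus \omdom[\nu]$, proving $\omfctl[\nu]$ exhaustive; alternatively one may invoke \cref{lem:exhaustive-om} to reduce to a single $x_2$.

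There is no deep obstacle here: essentially all the content sits in the local comparison of the first step. The only thing needing care is the case analysis in the exhaustiveness argument — checking that the split over $x_1 \in \latsp \setminus \omdom[\nu]$ is complete, and that in each case the three quotients appearing (the $\prior$-ratio, the numerator quotient, and the denominator quotient) have well-defined limits, which requires only positivity of $\prior$ on small balls around support points, finiteness of a local supremum of $f$, and finiteness of OM ratios on $\omdom$.
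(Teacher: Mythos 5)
Your proof is correct and is essentially the argument that the paper outsources to Lemma B.8 of Ayanbayev et al.\ (2022): sandwiching $\nu(\oball{r}{\latval})/\prior(\oball{r}{\latval})$ between the infimum and supremum of $f$ over the ball and then factoring the ball-mass ratios, with local compactness used to bound $f$ on small balls. The case analysis for exhaustiveness is complete, so the write-up stands on its own (and in fact only needs pointwise continuity plus local boundedness rather than the local uniform continuity the paper invokes via Heine--Cantor).
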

\begin{proof}
  Since $f$ is continuous and $\latsp$ is locally compact, $f$ is locally uniformly continuous by the Heine-Cantor theorem.
  With this, the statement can be proven analogously to Lemma B.8 of \citet{Ayanbayev2022GammaConvergence}.
\end{proof}

\end{document}